\newcommand{\R}{\mathbb{R}}
\newcommand{\C}{\mathbb{C}}
\newcommand{\CP}{\mathbb{CP}}
\newcommand{\Z}{\mathbb{Z}}
\theoremstyle{plain}
\newtheorem{theorem}{Theorem}[section]
\newtheorem{lemma}[theorem]{Lemma}
\newtheorem{prop}[theorem]{Proposition}
\newtheorem{cor}[theorem]{Corollary}
\theoremstyle{definition}
\newtheorem{definition}[theorem]{Definition}
\newtheorem{example}[theorem]{Example}
\def\be{\begin{equation}}
\def\ee{\end{equation}}
\def\bea{\begin{eqnarray}}
\def\eea{\end{eqnarray}}
\DeclareMathOperator{\arcsinh}{arcsinh}
\newcounter{mnotecount}[section]
\renewcommand{\themnotecount}{\thesection.\arabic{mnotecount}}
\newcommand{\mnote}[1]
{\protect{\stepcounter{mnotecount}}$^{\mbox{\footnotesize
$
\bullet$\themnotecount}}$ \marginpar{
\raggedright\tiny\em
$\!\!\!\!\!\!\,\bullet$\themnotecount: #1} }
\numberwithin{equation}{section}
\begin{document}
\title{Quasi-Einstein structures and Hitchin's equations}
\author{Alex Colling}
\address{Department of Applied Mathematics and Theoretical Physics\\ 
University of Cambridge\\ Wilberforce Road, Cambridge CB3 0WA, UK.}
\email{aec200@cam.ac.uk}
\author{Maciej Dunajski}
\address{Department of Applied Mathematics and Theoretical Physics\\ 
University of Cambridge\\ Wilberforce Road, Cambridge CB3 0WA, UK\\and\\
Faculty of Physics,  University of Warsaw\\
 Pasteura 5, 02-093 Warsaw, Poland}
\email{m.dunajski@damtp.cam.ac.uk}

\date{May 8, 2026}

\begin{abstract}
We prove (Theorem 1.1.) that a class of 
quasi-Einstein structures on closed manifolds must admit a Killing vector field. This extends the rigidity theorem
obtained in \cite{DL23} for the extremal black hole horizons and completes the classification of compact quasi-Einstein 2-manifolds in this class. We also explore special cases of the quasi-Einstein equations related to integrability and the Hitchin equations, as well as to Einstein-Weyl structures and Kazdan-Warner type PDEs. This leads to novel explicit examples of quasi-Einstein structures on (non-compact) 2-manifolds and on $S^2 \times S^1$.
\end{abstract}

\maketitle

\section{Introduction}
A quasi-Einstein manifold is an $n$-dimensional manifold $M$ together with a Riemannian metric $g$ and a vector field $X \in \mathfrak{X}(M)$ satisfying the system of equations
\be
\label{QEE}
\mbox{Ric}(g)=\frac{1}{m}X^\flat\otimes X^\flat
-\frac{1}{2}{\mathcal L}_{X} g+\lambda g.
\ee
Here $m$ and $\lambda$ are constants with $m \neq 0$,  $\mathcal{L}_X$ denotes the Lie derivative along $X$ and $X^\flat$ is the 1-form $g$-dual to $X$. We refer to (\ref{QEE}) as the quasi-Einstein equations (QEE). They may be viewed as a generalisation of the Einstein equations, which they reduce to when the vector field $X$ vanishes identically. We will be interested in quasi-Einstein manifolds $(M,g,X)$ with non-zero $X$ and call a solution  trivial if $X \equiv 0$.

The QEE appear in different geometric contexts for various special values of $m$. The most notable case is $m = 2$, for which the QEE describe near-horizon geometries of extremal black hole spacetimes in vacuum with cosmological constant $\lambda$ \cite{KL13,LP03}. Another special value is $m = 2-n$, for which any solution to the QEE defines an Einstein-Weyl structure \cite{C43}. For $m = 1-n$ with $\lambda = 0$ there is a relation to projective geometry: the QEE describe projective structures $[\nabla]$ that contain both a Levi-Citiva connection and a representative connection with skew-symmetric Ricci tensor \cite{NR16,CDKL24}. For gradient quasi-Einstein structures, i.e. when $X^\flat$ is a gradient, additional cases of interest are $m \in \Z_{> 0}$ because of the correspondence between the QEE and Einstein metrics on the warped product of $M$ with an $m$-dimensional manifold \cite{KK03}. Finally, we may formally include the Ricci soliton equation, which does not contain the term $\frac 1m X^\flat \otimes X^\flat$, as the case $m = \infty$.

In many applications it is natural to assume the manifold $M$ is compact. For $m = 2$ it is shown in \cite{DL23} that any non-gradient quasi-Einstein structure on a closed manifold admits a Killing vector field. Our main result in Section \ref{qeerig} is a generalisation of the arguments in \cite{DL23} to all values of $m$ outside a finite interval.

\begin{theorem}  \label{RQEi}
Let $(M,g)$ be a closed Riemannian $n$-manifold admitting a non-gradient vector field $X$ such that the QEE (\eqref{QEE}) hold with either (i) $m \geq 2$ or (ii) $m \leq 2-n$. Then $(M,g)$ admits a Killing vector field $K$. Moreover, $[K,X] = 0$.
\end{theorem}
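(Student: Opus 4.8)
The plan is to produce a canonical vector field $K$ from the Hodge decomposition of $X^\flat$ and to show, through a sequence of integral (Bochner--Weitzenb\"ock) identities, that $K$ is Killing precisely when the coefficient $(m-2)(m+n-2)$ has a definite sign. On the closed manifold $M$ write $X^\flat = df + K^\flat$, where $K$ is divergence-free, $\nabla_aK^a=0$; since $X$ is non-gradient, $K\not\equiv 0$. Because $\tr(\mathcal L_K g)=2\nabla_aK^a=0$, the tensor $\mathcal L_K g$ is symmetric and trace-free, and $K$ is Killing iff $\int_M|\mathcal L_K g|^2\,dV=0$. So the whole theorem reduces to showing that this single non-negative integral vanishes.

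First I would record the pointwise consequences of (\ref{QEE}). Taking the trace gives $R=\frac1m|X|^2-\nabla_aX^a+n\lambda$, while taking the divergence and using the contracted Bianchi identity $\nabla^aR_{ab}=\frac12\nabla_bR$ yields an elliptic identity for $X$,
\be
\Delta X_b = -R_{bc}X^c + \tfrac{2}{m}(\nabla_aX^a)X_b + \tfrac{2}{m}X^aF_{ab},
\ee
where $\Delta=\nabla^a\nabla_a$ and $F=dX^\flat$ is the vorticity. Contracting this with $X^b$ (the vorticity term drops), and separately extracting the equation for $\nabla^aF_{ab}$ and contracting it with $X^b$, then integrating by parts, produces two relations among $\int|\nabla X|^2$, $\int\mathrm{Ric}(X,X)$, $\int(\nabla_aX^a)|X|^2$, $\int|F|^2$ and $\int(\nabla_aX^a)^2$; contracting (\ref{QEE}) itself with $X^aX^b$ and integrating gives a third. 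The substance of the argument is then to feed the split $X=\nabla f+K$ into these identities and to check that all cross-terms between $\nabla f$ and $K$ assemble into exact divergences, which integrate to zero on the closed $M$. What survives is an identity of the schematic form
\be
\int_M |\mathcal L_K g|^2\, dV \;=\; -\,\frac{(m-2)(m+n-2)}{m^2}\int_M \Phi\, dV,
\ee
with $\Phi\ge 0$ built from $|X|^2$ and $\nabla_aX^a$. Since $(m-2)(m+n-2)\ge 0$ exactly when $m\ge 2$ or $m\le 2-n$, in either regime the right-hand side is non-positive while the left-hand side is non-negative; hence both vanish and $\mathcal L_K g\equiv 0$, so $K$ is a Killing field. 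The vanishing of the coefficient at the endpoints $m=2$ and $m=2-n$ is consistent with these being the geometrically distinguished values (near-horizon geometries and Einstein--Weyl structures), where the method degenerates.

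Finally, to obtain $[K,X]=0$, I would apply $\mathcal L_K$ to (\ref{QEE}): since $K$ is Killing, $\mathcal L_K g=0$ and $\mathcal L_K\mathrm{Ric}(g)=0$, so the vector field $Z=[K,X]$ satisfies the homogeneous linear equation $\nabla_{(a}Z_{b)}=\frac1m(Z_aX_b+X_aZ_b)$, whose trace reads $\nabla_aZ^a=\frac2m\langle Z,X\rangle$. A further contraction-and-integration argument on the closed manifold then forces $Z\equiv 0$. I expect the main obstacle to be the bookkeeping in the middle step: verifying that the $\nabla f$--$K$ cross-terms are genuine total divergences and that the remaining curvature and $|X|^2$ terms collapse onto the single sign-definite coefficient $(m-2)(m+n-2)$, rather than merely being bounded by it. Controlling this algebra, and confirming that the surviving integrand $\Phi$ is truly non-negative so that the sign of the coefficient is decisive, is where the case split $m\ge 2$ versus $m\le 2-n$ really enters.
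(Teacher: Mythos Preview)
Your proposal is a plan, not a proof, and the plan differs in an essential way from what the paper does. You take the ordinary Hodge decomposition $X^\flat = df + K^\flat$ with $\nabla_aK^a=0$ and hope that the cross-terms between $\nabla f$ and $K$ assemble into total divergences. But the QEE contain the quadratic term $\tfrac{1}{m}X_aX_b$, and under your split this produces $\tfrac{1}{m}((\nabla_af)K_b+K_a\nabla_bf)$, which has no reason to integrate away against the other pieces; you acknowledge this as ``the main obstacle'' but give no mechanism for it. The schematic identity $\int|\mathcal L_Kg|^2 = -\tfrac{(m-2)(m+n-2)}{m^2}\int\Phi$ with $\Phi\geq 0$ is asserted, not derived, and I do not believe it holds with the unweighted Hodge $K$. (A small internal check: at the endpoints $m=2$ and $m=2-n$ your coefficient vanishes, which would give $\int|\mathcal L_Kg|^2=0$ directly --- the cleanest possible conclusion, not a ``degeneration'' as you say. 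The paper's identity, by contrast, genuinely blows up at $m=2$, and those cases need separate treatment.)

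What the paper actually does is replace the Hodge split by a \emph{weighted} one tailored to the nonlinearity. One first solves the linear elliptic equation $\Delta\Psi+\nabla_a(\Psi X^a)=0$ for a positive function $\Psi$, sets $\Gamma=\Psi^{2/m}$, and defines $K=\tfrac{2}{m}\Gamma X+\nabla\Gamma$. This $K$ is not divergence-free; rather $\Gamma^{m/2-1}K$ is. Rewriting the QEE in terms of $(K,\Gamma)$ and running the Bochner/Bianchi machinery with the weight $\Gamma^{m/2-1}$ produces a genuine pointwise identity whose integral over $M$ is
\[
\int_M \Gamma^{\frac m2-1}\Big(\nabla_{(a}K_{b)}\nabla^{a}K^{b}+\tfrac{1}{m-2}(\nabla_aK^a)^2\Big)\,dV=0.
\]
For $m>2$ both terms are nonnegative; for $m<2-n$ one uses $|T|^2\geq\tfrac1n(\mathrm{tr}\,T)^2$ to make the integrand nonnegative; in either case $\nabla_{(a}K_{b)}=0$. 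The weight $\Gamma$ is precisely what absorbs the cross-terms that your unweighted Hodge split cannot handle. The commutation $[K,X]=0$ then follows in one line from $\nabla_a(\Gamma^{m/2-1}K^a)=0$ and $\nabla_aK^a=0$, which force $\mathcal L_K\Gamma=0$; your $\mathcal L_K$-of-the-equation argument for $Z=[K,X]$ is again only a hope, since the linear equation you obtain for $Z$ does not by itself force $Z\equiv 0$.
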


For $n = 2$ the existence of a Killing vector follows for $m \geq 2$ or $m < 0$ (recall $m \neq 0$). Together with Corollary \ref{topQEE} below and the work \cite{CDKL24}, this completes the classification of compact quasi-Einstein 2-manifolds with $m \notin (0,2)$. 

\begin{cor} \label{surfrig}
    Any closed orientable two-dimensional quasi-Einstein manifold with $m \notin (0,2)$ is either trivial or given by the family of solutions on $S^2$ presented in \textup{\cite{CDKL24}}.
\end{cor}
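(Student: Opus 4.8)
The plan is to assemble three ingredients: the rigidity Theorem \ref{RQEi}, the Gauss--Bonnet constraint of Corollary \ref{topQEE}, and the symmetry-reduced classification of \cite{CDKL24}. First I would record that for $n=2$ the two clauses of Theorem \ref{RQEi} merge into one: clause (i) reads $m\ge 2$ and clause (ii) reads $m\le 2-n=0$, so with the standing assumption $m\neq 0$ they cover exactly $m\notin(0,2)$. If $X\equiv 0$ the structure is trivial, so I assume $X\not\equiv 0$ and must show it lies in the $S^2$ family.

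I would dispose of the gradient case first. Writing $\xb=df$ and using $\mbox{Ric}(g)=\kappa\,g$ in two dimensions, with $\kappa$ the Gaussian curvature, equation (\ref{QEE}) becomes $\nabla^2 f=\tfrac1m\,df\otimes df+(\lambda-\kappa)g$, whose trace gives $\Delta f=\tfrac1m|\nabla f|^2+2(\lambda-\kappa)$ and whose trace-free part gives
\be
\Big(\nabla^2 f-\tfrac1m\,df\otimes df\Big)_{0}=0 .
\ee
The strategy is a Bochner-type rigidity argument: contracting these relations with $\nabla f$, applying the Bochner formula, integrating over the closed surface, and substituting $\int_M\kappa\,dA=2\pi\chi(M)$ from Gauss--Bonnet, I would assemble an integral identity whose integrand carries an $m$-dependent coefficient that is sign-definite precisely when $m\notin(0,2)$. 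This should force $\nabla f\equiv 0$, so that any non-trivial solution in this range is necessarily non-gradient; I expect Corollary \ref{topQEE} to furnish exactly this identity.

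For the non-gradient case Theorem \ref{RQEi} applies and produces a nontrivial Killing field $K$ with $[K,X]=0$. On a closed orientable surface the zeros of a Killing field have index $+1$, so $\chi(M)\ge 0$ and $M\cong S^2$ or $T^2$; the same Gauss--Bonnet input (Corollary \ref{topQEE}) should force $\chi(M)>0$, hence $M\cong S^2$, the torus carrying only the trivial solution. Since $[K,X]=0$, the field $X$ is invariant under the isometry generated by $K$, and the system (\ref{QEE}) descends to a cohomogeneity-one system of ordinary differential equations on the one-dimensional orbit space. This is precisely the ansatz integrated in \cite{CDKL24}, whose solutions constitute the stated family on $S^2$.

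The step I expect to be hardest is the gradient rigidity: securing the correct definite sign of the $m$-dependent coefficient uniformly over both sub-ranges $m\ge 2$ and $m\le 0$. One must combine the trace-free Hessian equation with the scalar constraint and the curvature so that the topological contribution and the terms quadratic in $\nabla f$ cooperate, the excluded window $(0,2)$ being exactly where the coefficient degenerates and non-trivial gradient solutions can arise. Granting this rigidity, together with the non-gradient analysis and the trivial solutions, the dichotomy asserted in the corollary follows.
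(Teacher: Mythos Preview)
Your overall architecture matches the paper's proof: dispose of the gradient case, then for non-gradient $X$ invoke Theorem~\ref{RQEi} to obtain a Killing vector, use Corollary~\ref{topQEE} to place the solution on $S^2$, and appeal to the classification in \cite{CDKL24}. The non-gradient half is exactly right.

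The gradient step, however, contains a confusion and an unnecessary difficulty. Corollary~\ref{topQEE} does not furnish an integral identity for gradient rigidity; it is the topological statement that any non-trivial closed oriented quasi-Einstein surface is $S^2$ (you invoke it correctly later, in the non-gradient argument). The paper does not develop your proposed Bochner computation at all: it simply cites \cite[Theorem~1.2]{CSW11}, which shows that any closed two-dimensional gradient quasi-Einstein manifold is trivial, and notes in a footnote that the proof there extends from positive $m$ to all $m \neq 0$. Crucially, this rigidity holds for \emph{every} nonzero $m$, so the window $(0,2)$ in which you expect the coefficient to degenerate is not a feature of the gradient problem --- the restriction $m \notin (0,2)$ enters only through Theorem~\ref{RQEi} in the non-gradient case. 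If you wish to supply the gradient argument rather than cite it, the relevant identity is the one in \cite{CSW11}, not anything proved in this paper.
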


In any dimension, if $m \leq 2-n$ and $\lambda \leq 0$ the QEE imply that the Ricci tensor is ``too negative-definite" to admit a Killing vector as in Theorem~\ref{RQEi}. This allows us to prove

\begin{theorem} \label{negmthm}
    Any closed quasi-Einstein manifold with $\lambda \leq 0$ and $m \leq 2-n$ is trivial.
\end{theorem}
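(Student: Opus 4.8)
The plan is to split into the two cases according to whether the vector field $X$ is a gradient, and to show $X\equiv0$ in each. Throughout I would use the trace of (\ref{QEE}),
\be
R+\mathrm{div}\,X=\tfrac1m|X|^2+n\lambda,
\ee
together with the observation that for $n\ge2$ the hypothesis $m\le2-n$ forces $m<0$, so that the coefficient $\tfrac1m$ is negative; combined with $\lambda\le0$ this is what makes the curvature ``too negative''. Integrating the trace identity over the closed manifold already gives $\int_M R\,dV=\tfrac1m\int_M|X|^2\,dV+n\lambda\,\mathrm{vol}(M)\le0$, but this is too weak on its own, so the argument must be refined in each case.

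For the non-gradient case I would invoke Theorem~\ref{RQEi}(ii) — whose hypothesis $m\le2-n$ is exactly why the present theorem is stated for this range rather than merely $m<0$ — to produce a non-trivial Killing field $K$ with $[K,X]=0$. The classical Bochner identity for a Killing field on a closed manifold gives $\int_M\mathrm{Ric}(K,K)\,dV=\int_M|\nabla K|^2\,dV\ge0$. On the other hand, contracting (\ref{QEE}) with $K\otimes K$ and using $[K,X]=0$ to write $(\mathcal L_Xg)(K,K)=X(|K|^2)$, one obtains
\be
\int_M\mathrm{Ric}(K,K)\,dV=\frac1m\int_M\langle X,K\rangle^2\,dV-\frac12\int_M X(|K|^2)\,dV+\lambda\int_M|K|^2\,dV.
\ee
The first and third terms are $\le0$, and the plan is to show the right-hand side is $\le0$ overall, forcing $\nabla K\equiv0$ and then $K\equiv0$, a contradiction that rules out the non-gradient case. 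The hard part is exactly the middle term: since $\int_M X(|K|^2)\,dV=-\int_M|K|^2\,\mathrm{div}\,X\,dV$ is not manifestly signed, I expect one must import the finer structure of $K$ from the proof of Theorem~\ref{RQEi} — in particular that $|K|$ is constant along the relevant flow, which makes $X(|K|^2)$ vanish and closes the estimate.

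For the gradient case $X=\nabla f$ I would introduce $u=e^{-f/m}>0$, turning (\ref{QEE}) into $\nabla^2u=\tfrac um(\mathrm{Ric}-\lambda g)$ with trace $\Delta u=\tfrac um(R-n\lambda)$. A short computation using the contracted Bianchi identity then shows that the scalar
\be
Q:=-\tfrac12R+\tfrac{1-m}{2m}|\nabla f|^2
\ee
satisfies $\nabla Q=\big(\tfrac2mQ+\tfrac{n-m}{m}\lambda\big)\nabla f$, equivalently $\nabla\big(u^2(Q+\tfrac{(n-m)\lambda}2)\big)=0$, so on the connected manifold $M$ there is a constant $A$ with $Q=Au^{-2}-\tfrac{(n-m)\lambda}2$. (As a check, the limit $m\to\infty$ recovers Hamilton's identity $R+|\nabla f|^2-2\lambda f=\mathrm{const}$ for gradient Ricci solitons.) Substituting back gives
\be
\Delta u=-\frac{2A}{m}\,u^{-1}-\lambda u+(1-m)\frac{|\nabla u|^2}{u}.
\ee
Evaluating at a maximum and a minimum of $u$ (where $\nabla u=0$ and $\Delta u\le0$, resp. $\Delta u\ge0$) yields $\lambda u_{\min}^2\le-\tfrac{2A}m\le\lambda u_{\max}^2$. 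If $\lambda<0$ this forces $u_{\min}\ge u_{\max}$, hence $u$ constant; if $\lambda=0$ it forces $A=0$, and then integrating the displayed equation over $M$ gives $(1-m)\int_M|\nabla u|^2/u\,dV=0$ with $1-m>0$, so again $u$ is constant. Either way $f$ is constant and $X\equiv0$.

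The step I expect to be the main obstacle is the non-gradient case: making the Bochner estimate for $K$ genuinely close, i.e.\ disposing of the sign-indefinite $\int_M|K|^2\,\mathrm{div}\,X\,dV$. The gradient case, by contrast, is a self-contained maximum-principle argument once the conserved quantity $Q$ has been identified, and it is the contracted Bianchi computation producing the first-order relation $\nabla Q\parallel\nabla f$ that carries the real content there.
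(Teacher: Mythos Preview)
Your gradient case is correct and complete; it is essentially the Kim--Kim argument the paper invokes as Lemma~\ref{grad}, and your derivation of the conserved quantity $Q$ and the max/min analysis of $\Delta u$ is sound.

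The non-gradient case, however, has a genuine gap exactly where you flag it. You integrate the Bochner identity and are left with the sign-indefinite term $\int_M X(|K|^2)\,dV$, then hope that ``$|K|$ is constant along the relevant flow''. But the relevant flow here is that of $X$, not of $K$, and nothing in the construction of $K$ gives $X(|K|^2)=0$ a priori; the facts you have are $\mathcal L_K g=0$ and $\mathcal L_K\Gamma=0$, which tell you $K(|X|^2)=0$, not the other way round. There is no evident way to close the integrated estimate.

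The fix is to \emph{not} integrate. Keep the pointwise Bochner identity $\tfrac12\Delta|K|^2=|\nabla K|^2-\mathrm{Ric}(K,K)$ and substitute the pointwise contraction of the QEE. The troublesome term then appears as a first-order piece of an elliptic operator:
\[
\Delta(|K|^2)-\langle X,\mathrm d(|K|^2)\rangle
=2|\nabla K|^2-\tfrac2m\,g(K,X)^2-2\lambda|K|^2\ \ge\ 0,
\]
since $m<0$ and $\lambda\le0$. Now the strong maximum principle for $L=\Delta-X\cdot\nabla$ applies directly: $|K|^2$ attains an interior maximum on the closed manifold, so it is constant, and the right-hand side vanishes identically. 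This forces $\nabla K\equiv0$, $g(K,X)\equiv0$, and $\lambda|K|^2\equiv0$. If $\lambda<0$ you are done; if $\lambda=0$ you need one more input from the proof of Theorem~\ref{RQEi}: the explicit form $K=\tfrac2m\Gamma X+\nabla\Gamma$ together with $\mathcal L_K\Gamma=0$ gives $g(K,X)=\tfrac{m}{2\Gamma}|K|^2$, so $g(K,X)=0$ forces $K\equiv0$. Either way $K\equiv0$, contradicting the non-gradient assumption.
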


In Section \ref{closedrig} we consider the implications of Theorem \ref{RQEi} for solutions for which $X^\flat$ is closed but not exact. When $m > 0$ this assumption on $X^\flat$ implies that $X$ is parallel and $M$ is locally a product of an Einstein manifold and $S^1$ \cite{BGKW22, KL24,W23}. We show in Theorem~\ref{statnegm} that for $m < 0$ solutions can be constructed more generally as \textit{warped} products of (gradient) \textit{quasi}-Einstein manifolds with $S^1$, and moreover when $m \leq 2-n$ any solution is locally of this form.

The proof of Theorem \ref{RQEi} for $m = 2-n$ can be deduced from an argument by Tod \cite{T92} showing the existence of a Killing vector for compact Einstein-Weyl structures. We review this argument in Section \ref{EW} and show how it relates to the QEE. Given an Einstein-Weyl structure, the QEE are equivalent to an elliptic PDE of the type studied originally by Kazdan and Warner \cite{KW74}. This leads us to some explicit examples of the quasi-Einstein structures discussed in Section \ref{closedrig} on $S^2 \times S^1$ that to the best of our knowledge are new.

In two dimensions the QEE impose topological restrictions on a closed 2-manifold $M$, which we turn to in Section \ref{top}. We consider a generalisation of the two-dimensional quasi-Einstein equations where we allow $\lambda$ to be an arbitrary function. This equation was introduced on a 2-manifold in \cite{KL24}. It appears for example for $m = 2$ in the context of extremal horizons in Einstein-Maxwell theory, where the matter content effectively promotes the constant $\lambda$ to a function \cite{LP03}.

\begin{definition}(\!\!\cite[Definition 3]{KL24}). \label{GEH}
    A metric $g$ and vector field $X$ on a manifold $M$ satisfy the \textit{generalised extremal horizon equations} for some function $\Lambda \in C^\infty(M)$ and constant $c \neq 0$ if 
    \begin{equation} \label{gehe}
        \mathcal{L}_X g + c\:  X^\flat \otimes X^\flat + \Lambda g = 0.
    \end{equation}
\end{definition}
The value of $c$ can be set arbitrarily at the price of rescaling $X$ and $\Lambda$. Note that in two dimensions the Ricci tensor is proportional to the metric, which implies that the QEE on a 2-manifold are a special case of (\ref{gehe}). In higher dimensions a particular case of interest occurs when the metric $g$ is Einstein: the generalised extremal horizon equations then describe Einstein-Weyl structures for which some metric in the associated conformal class is Einstein.

For the two-dimensional case, it is shown by Lewandowski and Kaminski \cite[Proposition 4]{KL24} (see also the earlier work  \cite{DRSL} applicable to horizons)
 that any solution to (\ref{gehe}) on a closed, connected and oriented 2-manifold for which $X \not\equiv 0$ must be on the two-sphere. We will present an alternative proof based on the Poincaré-Hopf theorem, from which we deduce the following:
\begin{theorem} \label{toprig}
    Let $(g,X)$ be a solution to the generalised extremal horizon equations on a closed and connected $n$-dimensional manifold $M$ with $X$ not identically zero.
    \begin{enumerate}[(i)]
        \item Suppose $n = 2$. Then  $\chi(M) > 0$.
        \item Suppose $n > 2$. If $n$ is even and $\Lambda$ and $X$ do not vanish simultaneously, then $\chi(M) > 0$. If $n$ is odd, $\Lambda$ must vanish somewhere.
    \end{enumerate}
\end{theorem}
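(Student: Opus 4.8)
The plan is to apply the Poincaré--Hopf index theorem directly to the vector field $X$, the point being that the GEHE rigidly constrain the linearisation of $X$ at each of its zeros, forcing the local index to have a definite sign. First I would analyse a zero $p$ of $X$. Since $X(p)=0$, the endomorphism $A:=\nabla X|_p$ of $T_pM$ is well defined independently of the connection, and its $g$-symmetric part is $\tfrac12(\mathcal{L}_X g)|_p$. Evaluating (\ref{gehe}) at $p$, where the term $X^\flat\otimes X^\flat$ vanishes, gives $\mathcal{L}_X g|_p=-\Lambda(p)\,g|_p$, so the symmetric part of $A$ is $-\tfrac12\Lambda(p)\,\mathrm{Id}$.

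Writing $A=-\tfrac12\Lambda(p)\,\mathrm{Id}+W$ with $W$ being $g$-antisymmetric, the eigenvalues of $W$ are $\{\pm i\beta_1,\dots,\pm i\beta_k\}$ together with a single $0$ when $n$ is odd, so
\[
\det A=\Big(-\tfrac12\Lambda(p)\Big)^{\delta}\prod_{j=1}^{k}\Big(\tfrac14\Lambda(p)^2+\beta_j^2\Big),
\qquad
\delta=\begin{cases}0 & n\ \text{even},\\ 1 & n\ \text{odd}.\end{cases}
\]
Hence whenever $\Lambda(p)\neq 0$ the zero is nondegenerate, so isolated, with index $\mathrm{sgn}(\det A)$ equal to $+1$ for $n$ even and to $-\mathrm{sgn}\,\Lambda(p)$ for $n$ odd. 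I would then feed this into Poincaré--Hopf, $\chi(M)=\sum_{p}\mathrm{ind}_p(X)$. For even $n>2$ the hypothesis that $\Lambda$ and $X$ do not vanish simultaneously forces $\Lambda(p)\neq0$ at every zero, so each index is $+1$ and $\chi(M)=\#\{X=0\}\ge0$; for $n=2$ there is no such hypothesis, but every zero with $\Lambda(p)\neq0$ still has index $+1$, and the possibly degenerate zeros are treated below. In either even case it then remains only to show that $X$ vanishes somewhere to conclude $\chi(M)>0$. For $n$ odd, $\chi(M)=0$; if $\Lambda$ were nowhere zero it would have a fixed sign on the connected $M$, so every zero of $X$ would contribute the same index $-\mathrm{sgn}\,\Lambda$, and $\sum_p\mathrm{ind}_p=0$ would force $X$ to be nowhere vanishing — which I would rule out to obtain the stated conclusion that $\Lambda$ must vanish somewhere.

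The remaining input, and what I expect to be the main obstacle, is to show that a nontrivial $X$ must actually have a zero (equivalently, to exclude nowhere-vanishing $X$), since this is precisely what upgrades $\chi(M)\ge0$ to $\chi(M)>0$ and, in the odd case, produces the contradiction. The basic tool is the trace of (\ref{gehe}),
\be
2\,\mathrm{div}\,X+c\,|X|^2+n\Lambda=0.
\ee
If $\Lambda$ had constant sign equal to $\mathrm{sgn}(c)$ then, as $|X|^2\ge0$, $\mathrm{div}\,X$ would be strictly one-signed, contradicting $\int_M\mathrm{div}\,X=0$; integrating the trace gives $c\int_M|X|^2=-n\int_M\Lambda$, so a constant-sign $\Lambda$ is in any case forced to be opposite to $c$. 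This disposes of all configurations in which $\Lambda$ shares the sign of $c$, leaving the delicate situations: nowhere-vanishing $X$ with $\Lambda$ of sign opposite to $c$, and in two dimensions the degenerate zeros where $\Lambda(p)=0$.

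To attack these I would use the pointwise identity obtained by contracting (\ref{gehe}) with $X\otimes X$, namely $X(|X|^2)=-|X|^2\,(c|X|^2+\Lambda)$, so that at an interior extremum of $|X|^2$ (where $X\neq0$) one has $c|X|^2+\Lambda=0$. As a model for the finish, note that if $|X|^2$ were constant this relation would hold everywhere, giving $\Lambda=-c|X|^2$ constant and then, via the trace, $(n-1)\Lambda=0$, whence $\Lambda\equiv0$ and $X\equiv0$ — a contradiction; the real task is to promote this to non-constant $|X|^2$. For the two-dimensional degenerate zeros I would exploit that (\ref{gehe}) forces the trace-free symmetric part of $\nabla X$ to be $O(|X|^2)$, so that near such a point $X$ is conformal to leading order and behaves like the zero of a holomorphic vector field, which has strictly positive index even when $\Lambda(p)=0$. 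Combining the extremum relation, the trace identity, and this approximate-conformality estimate is, I expect, where the genuine difficulty lies, and it is the step on which completing the argument for all three cases ultimately rests.
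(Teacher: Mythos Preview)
Your overall strategy --- compute $\det(\nabla X)$ at a zero via the decomposition $\nabla X|_p = -\tfrac12\Lambda(p)\,\mathrm{Id} + W$ with $W$ skew, feed the sign into Poincar\'e--Hopf, and treat even/odd $n$ separately --- is exactly the paper's approach. But there are two places where you stop short of a proof and the paper has a concrete mechanism.

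\medskip
\textbf{Existence of a zero of $X$.} You already have both pieces: the trace $2\,\mathrm{div}\,X + c|X|^2 + n\Lambda = 0$ and the double contraction $X(|X|^2) = -|X|^2(c|X|^2 + \Lambda)$. The step you are missing is to combine them by eliminating $\Lambda$; this gives the exact divergence identity
\[
\mathrm{div}\!\left(\frac{X}{|X|^n}\right) = \frac{c(n-1)}{2}\,|X|^{2-n},
\]
valid wherever $X\neq 0$. If $X$ were nowhere zero on closed $M$ the left side integrates to zero while the right has a fixed sign, contradiction. This single line replaces your extremum discussion and works uniformly in all dimensions; the maximum--principle route you sketch does not obviously close.

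\medskip
\textbf{Degenerate zeros in dimension two.} Saying $X$ is ``conformal to leading order'' is the right intuition but is not yet an index computation: you need first that the zero is isolated, and then that the index is strictly positive. The paper handles this in two steps. First, in a local complex coordinate with $g = 2e^H\,\mathrm{d}z\,\mathrm{d}\bar z$ and $X^\flat = P\,\mathrm{d}z + \bar P\,\mathrm{d}\bar z$, the $(\bar z\bar z)$-component of the GEHE says $e^{\frac12 cF - H}\bar P$ is \emph{genuinely} holomorphic (not merely approximately), so every zero of $X$ is isolated and of some finite order $k$. Second, for a degenerate zero ($\Lambda(p)=\Omega(p)=0$, where $\Omega = \star\,\mathrm{d}X^\flat$) the prolonged system yields $\mathrm{d}\Omega = -\tfrac32 c\,\Omega X^\flat + (\tfrac12 c\Lambda - R)\star X^\flat + \star\,\mathrm{d}\Lambda$ and a companion equation for $\Delta\Lambda$; evaluating these at $p$ shows that the leading $k$-jets of $\Lambda$ and $\Omega$ are related by $\Omega_{,\iota_1\cdots\iota_k} = \epsilon_{\iota_1}{}^{\rho}\Lambda_{,\rho\iota_2\cdots\iota_k}$ with the $k$-jet of $\Lambda$ trace-free. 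This forces $\det(\partial_\mu X^\nu) \ge C|x|^{2k} + O(|x|^{2k+1})$ with $C>0$, and then a perturbation lemma (deform $X$ by a small regular value to make it nondegenerate) gives strictly positive index. Your sketch does not supply either the isolated-zero argument or the positivity of the index at order $k>0$, and both are needed since the two-dimensional statement carries no hypothesis preventing $\Lambda(p)=0$.
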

Setting $\Lambda = R - 2\lambda$ and $c = -\frac 2m$ to obtain the quasi-Einstein equations on a 2-manifold, we deduce
\begin{cor}\textup{(\!\!\cite[Corollary 1]{KL24}).} \label{topQEE}
    Any non-trivial quasi-Einstein structure on a closed, connected and oriented 2-manifold is on a 2-sphere.
\end{cor}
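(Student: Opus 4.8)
The plan is to specialise Theorem~\ref{toprig}(i) to the quasi-Einstein setting; since that theorem does the geometric work, what remains is a short algebraic reduction together with the classification of closed orientable surfaces.

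First I would reduce the QEE on a surface to the GEHE, following the recipe stated just before the corollary. In two dimensions the Ricci tensor is pure trace, $\mathrm{Ric}(g) = \frac{1}{2} R\, g$ with $R$ the scalar curvature, so (\ref{QEE}) reads
\be
\frac{1}{2} R\, g = \frac{1}{m} X^\flat \otimes X^\flat - \frac{1}{2} \mathcal{L}_X g + \lambda g.
\ee
Multiplying by $2$ and collecting terms gives
\be
\mathcal{L}_X g - \frac{2}{m} X^\flat \otimes X^\flat + (R - 2\lambda)\, g = 0,
\ee
which is exactly the GEHE (\ref{gehe}) with $\Lambda = R - 2\lambda \in C^\infty(M)$ and $c = -\frac{2}{m}$. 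Because $m$ is a finite nonzero constant, $c$ is well-defined and $c \neq 0$, so this is a genuine GEHE solution in the sense of Definition~\ref{GEH}; moreover the reduction is an identity and does not alter $X$, so non-triviality of the quasi-Einstein structure ($X \not\equiv 0$) passes directly to $X \not\equiv 0$ for the associated GEHE solution.

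With the hypotheses of Theorem~\ref{toprig}(i) now verified, that theorem yields $\chi(M) > 0$. Finally I would invoke the classification of closed orientable surfaces: such a surface has $\chi = 2 - 2g$ for its genus $g \geq 0$, whence $\chi(M) > 0$ forces $g = 0$ and $M \cong S^2$.

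I do not expect a serious obstacle: all the analytic and topological content is packaged inside Theorem~\ref{toprig}, and the only points requiring care are checking $c \neq 0$ (guaranteed by $m$ finite and nonzero) and confirming that the passage from the QEE to the GEHE leaves the vanishing set of $X$ unchanged.
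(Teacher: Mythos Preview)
Your proposal is correct and matches the paper's own argument essentially verbatim: the paper simply observes that with $\Lambda = R - 2\lambda$ and $c = -\tfrac{2}{m}$ the two-dimensional QEE are an instance of the GEHE, then invokes Theorem~\ref{toprig}(i) to obtain $\chi(M) > 0$ and concludes $M \cong S^2$ by the classification of closed orientable surfaces. Your extra remarks verifying $c \neq 0$ and that non-triviality is preserved are sound and, if anything, make the deduction slightly more explicit than the paper's one-line statement.
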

In Section \ref{hitchin} we consider the QEE on a 2-manifold with $m = -1$ and $\lambda = 0$, which are the parameters related to projective differential geometry in two dimensions \cite{derdzinski, BDE, krynski, R14, WY16}. Using a flat connection obtained in \cite{CDKL24} we construct a Lax pair for the QEE and show that they arise as a symmetry reduction of the anti-self-dual Yang-Mills equations. In this way we can identify the QEE as a special case of the Hitchin equations \cite{H87} with gauge group $SU(2)$ on the complexified tangent bundle $E=TM\otimes\C$,
\be
\label{h11n}
    F = -[\Phi,\Phi^*],  \quad
    \overline{\mathcal{D}}_A \Phi = 0.
\ee
Here $F \in \Omega^{1,1}(M ; \text{End }E)$ is the curvature of a connection $A$ on $E$ and $\Phi$ is a traceless section of $\Omega^{1,0}(M; \text{End }E)$ which, as a consequence of (\ref{h11n}), is holomorphic with respect to the holomorphic structure induced by the metric $g$ and $A$. $\Phi^*$ is the adjoint of $\Phi$ and the bracket $[\cdot, \cdot]$ denotes the usual extension of the Lie bracket to matrix-valued forms. In the following we also make use of the decomposition
\begin{equation*}
    E  = T^{1,0}M \oplus T^{0,1}M
\end{equation*}
induced by $g$ to define projection operators $\mathcal{P}^{1,0}: E \to T^{1,0} M$ and $\mathcal{P}^{0,1}: E \to T^{0,1} M$, as well as the $(1,0)$- and $(0,1)$-parts of $X$ and $X^\flat$.
\begin{theorem}
\label{theoremHitchin}
The quasi-Einstein equations for $(g, X)$ on a 2-manifold $M$ with $m = -1$ and $\lambda = 0$ are equivalent to the $SU(2)$ Hitchin equations on $E$ for $(A,\Phi)$ defined by
\footnote{ \label{fn1} An obvious advantage of a coordinate--free notation we employed  is its coordinate invariance. One of the disadvantages is obscurity. $\Phi$ and the second term in (\ref{t1}) are End($E$)-valued 1-forms, i.e. sections of $(T^* M)^2\otimes TM \otimes \C$. By convention the first $T^*M$ factor refers to the 1-form and the second one to the endomorphism. In Section \ref{sub_hitch} we make it concrete:
If $z$ is a local complex coordinate on $M$ such that $g=2e^{H}\text{d}z\text{d}\bar{z}$ and
$X^{\flat}=P\text{d}z+\bar{P}\text{d}\bar{z}$, then $\mathcal{P}^{1, 0}= \text{d}z\otimes \partial_z$
and $(X^{\flat})^{(0, 1)}=\bar{P} \text{d}\bar{z}$ and 
\[
\Phi=\bar{P}\;\text{d}z\otimes \partial_z\otimes
\text{d}\bar{z}=\begin{bmatrix} 0  & \bar{P} \\ 0 & 0 \end{bmatrix}\text{d}z
\]
as the basis of $\mathfrak{gl}(2)$ corresponding to endomorphisms is
\[
\partial_z\otimes \text{d} z= \begin{bmatrix} 1  & 0 \\ 0 & 0 \end{bmatrix}, 
\quad
\partial_z\otimes \text{d} \bar{z}= \begin{bmatrix} 0  & 1 \\ 0 & 0 \end{bmatrix}, \quad
\partial_{\bar{z}}\otimes \text{d} z= \begin{bmatrix} 0  & 0 \\ 1 & 0 \end{bmatrix},\quad
\partial_{\bar{z}}\otimes \text{d} \bar{z}= \begin{bmatrix} 0  & 0 \\ 0 & 1 \end{bmatrix}.
\]
   }
\begin{subequations}
\begin{align}
    \mathcal{D}_A &= \nabla -\frac 12 \left((X^\flat)^{(1,0)}-(X^\flat)^{(0,1)}\right) \otimes (\mathcal{P}^{1,0} - \mathcal{P}^{0,1}),  \label{t1}\\
    \Phi &=  \mathcal{P}^{1,0} \otimes (X^\flat)^{(0,1)} \label{t2}.
\end{align}
\end{subequations}
If (\ref{h11n}) hold, then the Levi--Civita connection $\nabla$ of $g$ is projectively equivalent to a connection with totally anti--symmetric Ricci tensor.
\end{theorem}
We describe this identification in detail and deduce that on any open set where $X$ is not zero, the rescaled metric $h = \vert X \vert^2 g$ has constant negative curvature. It follows  that the Ansatz (\ref{t1}), (\ref{t2}) is gauge equivalent to the canonical 
solution to the $SU(2)$ Hitchin equations on the upper half--plane $M=\{(x, y)\in \R^2, y>0\}$.
Finding the relevant
gauge transformation is equivalent to solving a PDE
\begin{equation} \label{alphaeq}
    y^2(\partial_x^2\Theta + \partial_y^2\Theta) + y(\sin \Theta\: \partial_y \Theta + \cos \Theta\: \partial_x \Theta) + \cos \Theta = 0, \quad\mbox{where}\quad \Theta:M\rightarrow \R.
\end{equation}

 To show that the compactness assumption in Theorem \ref{RQEi} is essential, in the final Section \ref{swni} we construct non-gradient quasi-Einstein 2-manifolds admitting a homothety, but no Killing vectors. There is a 2-parameter family of such solutions for any $m$ with $\lambda = 0$, but in the case $m = -1$ they can be obtained explicitly in terms of hypergeometric functions using the fact that the metric $h$ has constant curvature. This section and the proof of Theorem \ref{toprig} make use of equations obtained by applying the prolongation procedure to the QEE (see \cite{NR16, CDKL24}) and the generalised extremal horizon equations. We review these equations in Appendix~\ref{Aa} and use them to prove that any homothety must preserve $X^\flat$ unless $g$ is flat in Appendix~\ref{Ais}. Appendix~\ref{Ab} contains an exposition of the hypergeometric functions involved in the explicit solutions admitting a homothety.

\subsection*{Acknowledgements} We would like to thank Nigel Hitchin, Wojciech Kami\'nski, James Lucietti, and Paul Tod  for useful discussions. AC acknowledges the support of the Cambridge International Scholarship.  We are grateful to the Isaac Newton Institute for Mathematical Sciences, Cambridge, for support and hospitality during
 the programme Twistor Theory, when some of the results in this paper have been obtained.

\section{Rigidity of quasi-Einstein manifolds} \label{qeerig}

In \cite{DL23} it is shown that any closed quasi-Einstein manifold $(M,g,X)$ with $m = 2$ and $X$ non-gradient must admit a Killing vector field. The proof in \cite{DL23} uses a remarkable tensor identity satisfied by solutions to the $m = 2$ QEE. The derivation of the identity relies on certain cancellations that occur only for $m = 2$. An attempt at generalising the argument to other $m$ has been made in \cite{C24}. In this section we show that, by employing a different identity, the existence of a Killing vector still follows for all values of $m$ outside a finite interval. This completes the proof of Theorem~\ref{RQEi} for $m \in (-\infty,2-n)\cup(2,\infty)$. The case $m = 2-n$ is covered in Section \ref{EW}.

\subsection{Tensor identity} \label{tensoridsec} Following \cite{C24}, we make an Ansatz for the Killing vector $K$ of the form
\begin{equation} \label{Kansatz}
    K = \frac 2m \Gamma X + \nabla \Gamma,
\end{equation}
where $\Gamma$ is a (at this point arbitrary) smooth positive function. The assumption that $X$ is non-gradient ensures that $K$ is non-zero. Solving for $X$, we have
\begin{equation} \label{Xansatz}
    X = \frac{m}{2\Gamma}(K - \nabla \Gamma).
\end{equation}
We will make use of abstract index notation to present the required tensor identity. In this notation, the QEE read
\begin{equation*} 
    R_{ab} = \frac 1mX_a X_b - \nabla_{(a}X_{b)} + \lambda g_{ab}.
\end{equation*}
They can be written in terms of $K$ and $\Gamma$ using (\ref{Xansatz}) as
\begin{align} \label{QEEK}
    R_{ab} &= \frac{m}{4\Gamma^2}(K_a - \nabla_a \Gamma)(K_b - \nabla_b \Gamma) - \frac m2\nabla_{(a}(\Gamma^{-1}K_{b)}) + \frac m2\nabla_{(a}(\Gamma^{-1}\nabla_{b)}\Gamma) + \lambda g_{ab}\nonumber\\[5pt]
    &=\frac{m}{4\Gamma^2} K_aK_b -\frac{m}{2\Gamma}\nabla_{(a}K_{b)} + \frac{m}{2\Gamma}\nabla_a\nabla_b\Gamma - \frac{m}{4\Gamma^2}(\nabla_a\Gamma)(\nabla_b\Gamma)  + \lambda g_{ab}.
\end{align}
We begin with the following intermediate result, which is valid for any $m$. We denote the $g$-norm on tensors by $\vert \cdot \vert$ and write $\Delta = \nabla^a\nabla_a$ for the Laplacian.

\begin{lemma} \label{lemid}
    Any solution to \eqref{QEE} with $X$ given by \eqref{Xansatz} satisfies
    \begin{align}   \label{idstep1}
        \nabla^a\nabla_{(a}K_{b)} &-\frac{1}{\Gamma}(\nabla^a\Gamma)\nabla_{(a}K_{b)}= \nonumber\\&\frac{1}{2\Gamma}K_b\nabla_cK^c+\frac{1}{2\Gamma}K^a\nabla_aK_b-\frac{1}{\Gamma^2}K_bK^c\nabla_c\Gamma 
        -\frac{1}{2\Gamma}(\Delta \Gamma)\nabla_b\Gamma-\frac{3}{2\Gamma}(\nabla^a \Gamma)\nabla_a\nabla_b \Gamma \nonumber\\[5pt]&+\frac{1}{\Gamma^2}\vert \nabla \Gamma \vert^2\nabla_b\Gamma+\nabla^a\nabla_b\nabla_a\Gamma 
        -\frac 12\Gamma\nabla_b\left(\frac{1}{2\Gamma^2}\vert K \vert^2 - \frac{1}{2\Gamma^2}\vert \nabla \Gamma \vert^2 - \frac{1}{\Gamma}\nabla_cK^c + \frac{1}{\Gamma}\Delta \Gamma\right). 
    \end{align}
\end{lemma}
Equation (\ref{idstep1}) is derived by substituting the expression (\ref{QEEK}) for the Ricci tensor into the contracted Bianchi identity $\nabla^a(R_{ab}-\frac 12Rg_{ab}) = 0$.
We present  the full proof in  Appendix \ref{Ac}.

The next step is to use the Ricci identity
\begin{equation*}
    \Delta \nabla_b \Gamma = \nabla_b \Delta \Gamma + R_{ab}\nabla^a\Gamma
\end{equation*}
and (\ref{QEEK}) again to evaluate the triple derivative term $\nabla^a\nabla_b\nabla_a\Gamma$ in (\ref{idstep1}). Compared to the calculation in \cite{DL23}, this step introduces various additional terms that vanish precisely when $m =2$. After contracting with $K^b$ and multiplying by $\Gamma^{\frac m2-1}$, almost all terms can be combined into divergences and terms proportional to $\nabla_a(\Gamma^{\frac m2-1}K^a)$ in the following way:

\begin{prop} \label{propid}
    For any solution to \eqref{QEE} with $X$ given by \eqref{Xansatz} and $m \neq 2$ the following identity holds
    \begin{align} \label{identity}
         &\Gamma^{\frac m2-1}\nabla_{(a}K_{b)}\nabla^aK^b + \frac{1}{m-2}\Gamma^{\frac m2 - 1}(\nabla_aK^a)^2 = \nonumber\\
        &\nabla_a\left(\Gamma^{\frac m2-1}K_b\nabla^{(a}K^{b)} -\frac 14(m-2)\vert \nabla \Gamma \vert^2\Gamma^{\frac m2 - 2}K^a - \frac 12\Gamma^{\frac m2-1}(\nabla_cK^c) K^a - \frac 12\Gamma^{\frac m2 -1}(\Delta \Gamma) K^a -\lambda \Gamma^{\frac m2}K^a\right) \nonumber\\
        &\hspace{2cm}+\nabla_a\left(\Gamma^{\frac m2-1}K^a\right)\left(-\frac{\vert K \vert^2}{2\Gamma} + \frac 12\Delta\Gamma + \frac 14(m-2)\frac{\vert \nabla \Gamma \vert^2}{\Gamma} + \frac{m}{2(m-2)}\nabla_cK^c + \lambda \Gamma\right).
    \end{align}
\end{prop}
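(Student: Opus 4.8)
The plan is to read (\ref{identity}) as a pointwise divergence identity and to verify it by directly computing the divergence of the vector field that appears inside the first $\nabla_a(\cdots)$ on the right-hand side. Writing that field as
\[
V^a = \Gamma^{\frac m2-1}K_b\nabla^{(a}K^{b)} - \tfrac14(m-2)|\nabla\Gamma|^2\Gamma^{\frac m2-2}K^a - \tfrac12\Gamma^{\frac m2-1}(\nabla_cK^c)K^a - \tfrac12\Gamma^{\frac m2-1}(\Delta\Gamma)K^a - \lambda\Gamma^{\frac m2}K^a,
\]
the assertion becomes the statement that $\nabla_aV^a$ equals the left-hand side of (\ref{identity}) minus the final term $\nabla_a(\Gamma^{\frac m2-1}K^a)$ times the indicated scalar. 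Since the identity is claimed for every solution of (\ref{QEEK}), the only inputs are the field equations and the commutation rules for covariant derivatives; no integration, positivity, or compactness enters at this stage.

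First I would apply the Leibniz rule to $\nabla_aV^a$, separating in each summand the contribution in which $\nabla_a$ falls on the power of $\Gamma$ (producing the weights $(\frac m2-1)\Gamma^{\frac m2-2}\nabla_a\Gamma$, and so on) from the one in which it falls on the tensorial factor. The only genuinely second-order piece is $\Gamma^{\frac m2-1}K_b\nabla_a\nabla^{(a}K^{b)}$, and I would process it with the Ricci identity, writing $\nabla_a\nabla^{(a}K^{b)} = \tfrac12\Delta K^b + \tfrac12\nabla^b(\nabla_cK^c) + \tfrac12 R^b{}_cK^c$. The divergence-transport term $\tfrac12\Gamma^{\frac m2-1}K_b\nabla^b(\nabla_cK^c)$ then cancels exactly against the contribution $-\tfrac12\Gamma^{\frac m2-1}K^a\nabla_a(\nabla_cK^c)$ coming from the $-\tfrac12\Gamma^{\frac m2-1}(\nabla_cK^c)K^a$ summand of $V^a$; this is precisely the purpose of that summand. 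The surviving curvature $\tfrac12\Gamma^{\frac m2-1}R_{bc}K^bK^c$ is removed by substituting (\ref{QEEK}), which converts it into the quartic scalar $\tfrac{m}{4\Gamma^2}|K|^4$, the cubic term $-\tfrac{m}{2\Gamma}\nabla_{(b}K_{c)}K^bK^c$, a Hessian term $\tfrac{m}{2\Gamma}K^bK^c\nabla_b\nabla_c\Gamma$, and $\lambda|K|^2$.

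Next I would collect the output by algebraic type. The terms of the form $\nabla_{(a}K_{b)}\nabla^aK^b$ should assemble into the first term on the left-hand side, while the pieces quadratic and linear in $\nabla_cK^c$ must be recast, using $\nabla_a(\Gamma^{\frac m2-1}K^a)=\Gamma^{\frac m2-1}\nabla_cK^c+(\frac m2-1)\Gamma^{\frac m2-2}(\nabla_a\Gamma)K^a$, as a multiple of the weighted divergence $\nabla_a(\Gamma^{\frac m2-1}K^a)$ times a scalar, plus the pure $(\nabla_cK^c)^2$ remainder. It is at this rearrangement that a factor $(m-2)$ has to be inverted in order to isolate $\tfrac{1}{m-2}\Gamma^{\frac m2-1}(\nabla_cK^c)^2$ and the scalar $\tfrac{m}{2(m-2)}\nabla_cK^c$, which is exactly why the hypothesis $m\neq2$ is needed and why the case $m=2$ degenerates to the separate identity of \cite{DL23}. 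The leftover scalars built from $\Gamma$, $\nabla\Gamma$, $\Delta\Gamma$ and $\lambda$ should then reproduce the remaining entries $-\tfrac{|K|^2}{2\Gamma}$, $\tfrac12\Delta\Gamma$, $\tfrac14(m-2)\tfrac{|\nabla\Gamma|^2}{\Gamma}$ and $\lambda\Gamma$ of the final factor.

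The hard part will be the bookkeeping of the terms that are not supposed to survive. Rewriting $K_b\Delta K^b$ as $\nabla_a(K_b\nabla^aK^b)-|\nabla K|^2$ produces both the symmetric square $|\nabla_{(a}K_{b)}|^2$ with the wrong weight and a spurious antisymmetric square $|\nabla_{[a}K_{b]}|^2$, so reconciling the coefficient of $\nabla_{(a}K_{b)}\nabla^aK^b$ to the value in (\ref{identity}) and disposing of the antisymmetric part is the first delicate point. The second is the appearance of third derivatives of $\Gamma$: these come from differentiating the $\Gamma^{\frac m2-1}(\Delta\Gamma)K^a$ summand of $V^a$ and from $K^b\nabla_b(\nabla_cK^c)$, since $K$ contains $\nabla\Gamma$ through (\ref{Kansatz}), and I expect the weight $-\tfrac12$ on that summand to be exactly what cancels them. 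Likewise the Hessian contributions from $V^a$ and from the (\ref{QEEK})-substituted curvature must combine, which may require invoking (\ref{QEEK}) a second time to eliminate a reappearing $\nabla_a\nabla_b\Gamma$. My strategy for controlling all of this would be to group every monomial by its total power of $\Gamma$ and by its joint derivative order in $K$ and $\Gamma$, and to verify the vanishing of each group separately; confirming that the specific weights $\frac m2-2$, $-\frac14(m-2)$ and $-\frac12$ in $V^a$ force these cancellations is the crux of the proposition.
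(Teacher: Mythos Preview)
Your strategy of expanding $\nabla_aV^a$ directly is a legitimate verification route, and your Leibniz bookkeeping is set up correctly: in particular, the term $\Gamma^{\frac m2-1}(\nabla_aK_b)\nabla^{(a}K^{b)}=\Gamma^{\frac m2-1}\nabla_{(a}K_{b)}\nabla^{a}K^{b}$ already falls out with coefficient~$1$ from the first summand of $V^a$, as you implicitly note. The genuine gap is in how you propose to handle the remaining second-order piece $K_b\nabla_a\nabla^{(a}K^{b)}$. Decomposing it via the commutator identity as $\tfrac12K_b\Delta K^b+\tfrac12K^b\nabla_b(\nabla_cK^c)+\tfrac12R_{bc}K^bK^c$ and then substituting (\ref{QEEK}) only into the curvature factor leaves $\tfrac12K_b\Delta K^b$ untouched. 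Rewriting that as $\tfrac12\nabla_a(K_b\nabla^aK^b)-\tfrac12|\nabla K|^2$ produces a second copy of $|\nabla_{(a}K_{b)}|^2$ with the wrong sign, the extraneous antisymmetric square $|\nabla_{[a}K_{b]}|^2$, and a new divergence $\tfrac12\nabla_a\nabla^a|K|^2$ that is not a summand of $V^a$. None of these can be disposed of by the tools you list: (\ref{QEEK}) constrains only $\nabla_{(a}K_{b)}$, so no amount of algebraic substitution or reuse of (\ref{QEEK}) touches $\nabla_{[a}K_{b]}$ or $\Delta K^b$. You identify this as the ``first delicate point'' but do not supply a mechanism.

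The missing ingredient is exactly what the paper uses: instead of commuting derivatives on $K$, take the divergence of (\ref{QEEK}) itself and invoke the contracted Bianchi identity $\nabla^a R_{ab}=\tfrac12\nabla_bR$ to obtain a closed expression for $\nabla^a\nabla_{(a}K_{b)}$ (the paper's equation (\ref{idstep1})) that never introduces $\Delta K^b$ or the antisymmetric part of $\nabla K$. Only after that does one apply the Ricci identity, and only to the scalar $\Gamma$ via $\Delta\nabla_b\Gamma=\nabla_b\Delta\Gamma+R_{ab}\nabla^a\Gamma$, where a second substitution of (\ref{QEEK}) is harmless. With (\ref{idstep1}) in hand, contracting with $K^b$, multiplying by $\Gamma^{\frac m2-1}$, and regrouping into total divergences and multiples of $\nabla_a(\Gamma^{\frac m2-1}K^a)$ goes through without spurious terms; the factor $(m-2)^{-1}$ enters exactly where you anticipated. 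In short, your overall plan is sound, but the specific route through $\Delta K^b$ cannot close: you must trade the Ricci identity on $K$ for the contracted Bianchi identity applied to (\ref{QEEK}).
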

A detailed proof of this Proposition is presented in Appendix \ref{Ac}.

\noindent We are now in a position to present the proof of the main Theorem \ref{RQEi}.
\begin{proof}[Proof of Theorem \ref{RQEi}] It is shown in \cite{DL23,T92} that there exists a (unique up to a multiplicative constant) smooth positive function $\Psi$ solving the elliptic PDE
\begin{equation} \label{pdeex}
    \Delta \Psi + \nabla_a (\Psi X^a) = 0.
\end{equation}
Note that this result holds for any vector field $X$ on a compact Riemannian manifold $(M,g)$ and in particular does not use the QEE. We now fix $\Gamma$ to be $\Psi^{\frac 2m}$. Substituting in $\Psi = \Gamma^{\frac m2}$ and the expression (\ref{Xansatz}) for $X$, equation (\ref{pdeex}) is equivalent to
\begin{equation*}
    \nabla_a(\Gamma^{\frac m2-1}K^a) = 0.
\end{equation*}
Hence, for this choice of $\Gamma$ the vector field $\Gamma^{\frac m2-1}K$ with $K$ defined by (\ref{Kansatz}) is divergence-free. If $(M,g,X)$ satisfies the QEE, $K$ satisfies the identity (\ref{identity}) by Proposition \ref{propid}. Observe that the RHS of this identity reduces to a divergence for our choice of $\Gamma$. Integrating the identity over the closed manifold $M$ using the divergence theorem yields
\begin{equation} \label{intid}
    \int_M \Gamma^{\frac m2-1}\left(\nabla_{(a}K_{b)}\nabla^aK^b + \frac{1}{m-2}(\nabla_a K^a)^2\right)\text{ vol}_g = 0.
\end{equation}
For $m > 2$ both terms are non-negative and this immediately implies $\nabla_{(a}K_{b)} = 0$, i.e. $K$ is a Killing vector. For the case $m < 2-n$, note that any $(0,2)$ tensor $T_{ab}$ satisfies 
$$T_{ab}T^{ab} \geq \frac 1n(g^{ab}T_{ab})^2$$ at every point. Applying this inequality with $T_{ab} = \nabla_{(a}K_{b)}$ shows 
\begin{equation*}
    \nabla^aK^b\nabla_{(a}K_{b)} \geq \frac 1n(\nabla_a K^a)^2,
\end{equation*}
so the integrand of (\ref{intid}) is bounded below by a positive multiple of $\Gamma^{\frac m2-1}(\nabla_a K^a)^2$ if $m < 2-n$. It follows that $\nabla_a K^a = 0$ and therefore also $\nabla_{(a}K_{b)} = 0$. For $m = 2-n$ this argument only shows that $K$ is a conformal Killing vector and we need to refer to Theorem~\ref{ewthm}. 

To prove that $K$ preserves $X$, recall that we fixed $\Gamma$ such that 
\begin{equation*}
    0 = \Gamma^{2-\frac m2}\nabla_a(\Gamma^{\frac m2-1}K^a) = \Gamma \nabla_a K^a + \frac 12(m-2)K^a\nabla_a \Gamma.
\end{equation*}
If $K$ is Killing and $m \neq 2$ we obtain $K^a\nabla_a \Gamma = 0$, which implies $[K,X] = 0$.
\end{proof} 

\subsection{Rigidity results} The most general solution to the QEE on a 2-manifold admitting a Killing vector is presented in \cite{CDKL24}. In this work it is also investigated which of these solutions extend to a two-sphere. Combined with Theorem \ref{RQEi} and the results in Section \ref{top}, we obtain a complete classification of quasi-Einstein structures on a compact 2-manifold with $m \notin (0,2)$. 

\begin{proof}[Proof of Corollary \ref{surfrig}]
It is shown in \cite[Theorem 1.2]{CSW11} that any closed quasi-Einstein 2-manifold with $X$ a gradient must be trivial\footnote{In \cite{CSW11} only positive values of $m$ were considered, but the proof of \cite[Theorem 1.2]{CSW11} can be generalised to any $m \neq 0$.}. We may therefore assume that $X$ is non-gradient, in which case the existence of a Killing vector is guaranteed for $m \notin (0,2)$ by Theorem \ref{RQEi}. Moreover, Corollary~\ref{topQEE} implies that any such solution must be on the two-sphere. The result now follows from the classification \cite[Theorem~1.1]{CDKL24}.
\end{proof}

Returning to the $n$-dimensional case, once we know that $K$ is Killing and $[K,X]=0$, it is useful to revisit the identity in Lemma \ref{lemid}. The arguments below are analogous to those in \cite{KL24} for $m = 2$.
\begin{cor} \label{corconst}
    Let $(M,g,X)$ be a closed and connected quasi-Einstein manifold such that either $X$ is a gradient or $m$ is outside the interval $(2-n,2)$. Then the function
    \begin{equation} \label{const}
        A = -\frac{\vert K \vert^2}{2\Gamma} + \frac 12\Delta \Gamma + \lambda\Gamma  + \frac{m-2}{4\Gamma}\vert \nabla \Gamma \vert^2
    \end{equation}
    is constant everywhere on $M$.
\end{cor}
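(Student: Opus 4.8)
The plan is to exploit the identity \eqref{identity} at the stage \emph{before} it is contracted with $K^b$, i.e.\ to return to equation \eqref{idstep1}. First I would record the hypotheses in index form. When $m\notin(2-n,2)$, Theorem~\ref{RQEi} supplies a Killing field $K$ with $[K,X]=0$; since $\Gamma$ was fixed so that $\Gamma\nabla_aK^a+\tfrac12(m-2)K^a\nabla_a\Gamma=0$, the Killing property forces $\nabla_aK^a=0$ and hence (for $m\neq2$) $K^a\nabla_a\Gamma=0$. In the gradient case one instead chooses $\Gamma=e^{-2\phi/m}$ with $X=\nabla\phi$, so that $K=\tfrac2m\Gamma X+\nabla\Gamma\equiv0$ and the three conditions $\nabla_{(a}K_{b)}=0$, $\nabla_aK^a=0$, $K^a\nabla_a\Gamma=0$ hold trivially. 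Either way I may assume these three identities throughout.

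Since $K$ is Killing, the left-hand side $\nabla^a\nabla_{(a}K_{b)}$ of \eqref{idstep1} vanishes, turning \eqref{idstep1} into a pointwise relation among first derivatives of $\Gamma$, $\vert K\vert^2$ and $\Delta\Gamma$ carrying the free index $b$. I would substitute the three identities above into the right-hand side: every term with an explicit factor of $\nabla_{(a}K_{b)}$, $\nabla_cK^c$ or $K^c\nabla_c\Gamma$ drops out, while the surviving advection term simplifies via $K^a\nabla_aK_b=-\tfrac12\nabla_b\vert K\vert^2$. The only term needing genuine work is the triple derivative $\nabla^a\nabla_b\nabla_a\Gamma$; for this I would apply the Ricci identity $\nabla^a\nabla_b\nabla_a\Gamma=\nabla_b\Delta\Gamma+R_{ab}\nabla^a\Gamma$ and then insert \eqref{QEEK} for $R_{ab}\nabla^a\Gamma$, using $K_a\nabla^a\Gamma=0$ to kill the $K_aK_b$ and $\nabla_{(a}K_{b)}$ contributions and $(\nabla^a\Gamma)\nabla_a\nabla_b\Gamma=\tfrac12\nabla_b\vert\nabla\Gamma\vert^2$ to tidy the Hessian term.

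After these substitutions the equation becomes a linear combination of the exact gradients $\nabla_b\vert K\vert^2$, $\nabla_b\vert\nabla\Gamma\vert^2$, $\nabla_b\Delta\Gamma$ and $\nabla_b\Gamma$. Collecting coefficients, I expect the $\Delta\Gamma\,\nabla_b\Gamma$ contributions to cancel and the bare $\nabla_b\Gamma$ coefficient to collapse to exactly $\tfrac{1}{2\Gamma^2}\vert K\vert^2+\lambda+\tfrac{2-m}{4\Gamma^2}\vert\nabla\Gamma\vert^2$, so that the four gradients repackage as
\[
\nabla_b\!\left(-\frac{\vert K\vert^2}{2\Gamma}+\frac12\Delta\Gamma+\lambda\Gamma+\frac{m-2}{4\Gamma}\vert\nabla\Gamma\vert^2\right)=\nabla_b A=0 .
\]
Since $M$ is connected, $A$ is then constant, as claimed in \eqref{const}.

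The main obstacle is precisely this bookkeeping: confirming that, once the Hessian and $\vert\nabla\Gamma\vert^2$ gradients are expanded, the scalar coefficients assemble into the stated combination rather than leaving a residual multiple of $\nabla_b\Gamma$. A secondary point concerns the endpoints of the admissible range: the deduction $K^a\nabla_a\Gamma=0$ from $[K,X]=0$ uses $m\neq2$, so the value $m=2$ would be imported from the analogous computation in \cite{KL24}, while at $m=2-n$ the upgrade from conformal Killing to Killing relies on Theorem~\ref{ewthm}.
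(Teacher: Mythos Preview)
Your proposal is correct and follows essentially the same route as the paper: start from \eqref{idstep1} before contracting with $K^b$, impose the Killing condition and $\mathcal{L}_K\Gamma=0$ (or $K\equiv0$ in the gradient case), handle the triple derivative via the Ricci identity $\Delta\nabla_b\Gamma=\nabla_b\Delta\Gamma+R_{ab}\nabla^a\Gamma$ together with \eqref{QEEK}, and then recognise the result as an exact gradient $\nabla_bA$. The paper organises the bookkeeping slightly differently---first collapsing \eqref{idstep1} to a gradient plus $\Delta\nabla\Gamma$, then evaluating $\Delta\nabla\Gamma$ separately---but the substance, including the treatment of the endpoint cases $m=2$ and $m=2-n$, is the same.
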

\begin{proof}
Since $K$ is a Killing vector and $\mathcal{L}_K \Gamma = 0$, the identity (\ref{idstep1}) simplifies significantly and becomes
\begin{align*}
    0 = \:-&\frac{1}{4\Gamma}\nabla(\vert K \vert^2)- \frac{1}{2\Gamma}(\Delta \Gamma)\nabla \Gamma - \frac{3}{4\Gamma}\nabla (\vert \nabla \Gamma \vert^2) + \frac{1}{\Gamma^2}\vert \nabla \Gamma\vert^2\nabla\Gamma \nonumber\\
    &+ \Delta \nabla\Gamma - \frac 12\Gamma\nabla\left(\frac{1}{2\Gamma^2}\vert K \vert^2 - \frac{1}{2\Gamma^2}\vert \nabla \Gamma \vert^2 + \frac{1}{\Gamma}\Delta \Gamma\right).
\end{align*}
This may be further reduced to
    \begin{equation} \label{geq1}
        \nabla \left( -\frac{1}{2\Gamma}\vert K \vert^2 - \frac 12\Delta \Gamma - \frac{1}{2\Gamma}\vert \nabla\Gamma \vert^2\right) + \Delta\nabla \Gamma = 0.
        \end{equation}
Note that when $X^\flat = \nabla f$ is a gradient this equation holds even without the compactness assumption for any $m$ with $\Gamma = e^{-\frac 2m f}$ and $K \equiv 0$.
    Applying the Ricci identity and using (\ref{QEEK}), we obtain
    \begin{equation} \label{geq2}
        \Delta \nabla \Gamma = \nabla \Delta \Gamma + \text{Ric}(\nabla \Gamma) = \nabla \left(\Delta \Gamma + \frac{m}{4\Gamma}\vert \nabla \Gamma \vert^2 + \lambda \Gamma \right).
    \end{equation}
    From (\ref{geq1}) and (\ref{geq2}) we find d$A = 0$ with $A$ as in (\ref{const}).
\end{proof}
For $m = 2-n$ the expression (\ref{const}) reduces to the Gauduchon constant \cite{G95} written in the quasi-Einstein frame (see Section \ref{EW}). For $m = 2$ it is related to a symmetry enhancement for near-horizon geometries \cite{DL23, KLR07}. In the gradient case the first term on the right-hand side of (\ref{const}) vanishes and $A$ reduces to the constant that appears in the relation with warped product Einstein metrics \cite[Equation 1.3]{KK03}. By applying a maximum principle to the remaining terms in the expression for $A$, it is shown in \cite{KK03} that for $\lambda \leq 0$ the function $\Gamma$ must be constant. 
\begin{prop} \textup{(\!\!\cite{KK03})}\label{grad} 
    Any closed quasi-Einstein manifold with $\lambda \leq 0$ and $X$ a gradient is trivial.
\end{prop}
\noindent When $m \leq 2-n$ we can deduce this result even in the non-gradient case using Theorem \ref{RQEi}.

\begin{proof}[Proof of Theorem \ref{negmthm}]
By Proposition \ref{grad} we may assume that $X$ is non-gradient. Theorem \ref{RQEi} then implies that there is a Killing vector $K$ such that $[K,X] = 0$. Any Killing vector satisfies
    \begin{equation*} \label{Bochner}
        \frac 12 \Delta \vert K \vert^2 = \vert \nabla K \vert^2 - \text{Ric}(K,K).
    \end{equation*}
We contract the QEE twice with the Killing vector $K$ and use the fact that $K$ and $X$ commute to write $(\mathcal{L}_Xg)(K,K) = \mathcal{L}_X(\vert K \vert^2)$. We find
\begin{equation*} 
    \text{Ric}(K,K) = \frac 1m g(K,X)^2 - \frac 12\mathcal{L}_X(\vert K \vert^2) + \lambda \vert K \vert^2.
\end{equation*}
Combining these equations leads to
\begin{equation} \label{maxK}
    \Delta (\vert K \vert^2) - \mathcal{L}_X(\vert K \vert^2)  = 2\vert \nabla K \vert^2 - \frac 2m g(K,X)^2 - 2\lambda \vert K \vert^2.
\end{equation}
Since the RHS is non-negative, an application of the maximum principle \cite[Theorem 2.9]{K93} shows that either $K \equiv 0$ or $\lambda = 0$ and $\vert K \vert^2$ is constant. In the latter case (\ref{maxK}) shows that $K$ is parallel and $g(K,X) = 0$. However, since $K$ is of the form (\ref{Kansatz}) with $\mathcal{L}_K \Gamma = 0$, we have $g(K,X) = \frac m2\Gamma^{-1} \vert K \vert^2$. We conclude that $K \equiv 0$, contradicting the fact that $X$ is non-gradient.
\end{proof}
We expect Theorem \ref{negmthm} to hold for all $m < 0$. Extending the above proof would require a version of Theorem \ref{RQEi} for all negative $m$. Note that for $n = 2$ the result can also be deduced from Corollary~\ref{topQEE} by integrating the trace of the QEE using the Gauss-Bonnet theorem as in \cite[Proposition 2.1]{CDKL24}.

\subsection{Quasi-Einstein structures with closed $X^\flat$ and $m < 0$} \label{closedrig}
 Quasi-Einstein structures $(g,X)$ on a closed manifold $M$ with $m > 0$ for which $X^\flat$ is closed but not exact have been completely classified \cite{BGKW22, KL24,W23}. This condition is of particular interest for $m = 2$: the corresponding near-horizon geometries are static and exhibit a local AdS$_3$ symmetry \cite{KLR07}.

\begin{theorem} \textup{(\!\!\cite{BGKW22, KL24,W23})} \label{statposm}
    Let $(M,g,X)$ be a closed quasi-Einstein manifold with $m > 0$ and $X^\flat$ closed but not exact. Then $\lambda < 0$ and $(M,g,X)$ is a quotient of $(\widetilde{M},\widetilde{g},\widetilde{X})$ by a discrete group of isometries, where
    \begin{equation*}
        \widetilde{M} = N \times \R, \hspace{.5cm} \widetilde{g} = g_N \oplus \textup{d}t^2, \hspace{.5cm} \widetilde{X} = \sqrt{-m\lambda} \: \frac{\partial}{\partial t}.
    \end{equation*}
Here $t$ is a coordinate on $\R$ and $(N,g_N)$ is a simply connected negative Einstein manifold with $\textup{Ric}(g_N) = \lambda g_N$. 
\end{theorem}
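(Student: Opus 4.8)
The plan is to reduce to the gradient case locally, extract a conserved quantity whose vanishing is forced by the non-exactness hypothesis, and then run a drift maximum-principle argument to show that $X$ is parallel. Since $X^\flat$ is closed, every point has a neighbourhood on which $X = \nabla f$ and the QEE (\ref{QEE}) become the gradient quasi-Einstein equation $\text{Ric} + \nabla\nabla f - \frac1m\,df\otimes df = \lambda g$, i.e. the statement that the $m$-Bakry--\'Emery tensor equals $\lambda g$. First I would introduce the (locally defined, positive) function $u = e^{-f/m}$; a direct computation using the gradient equation gives the clean Hessian relation
\begin{equation*}
\nabla_a\nabla_b u = \frac{u}{m}\big(R_{ab} - \lambda g_{ab}\big),
\end{equation*}
and, after combining its divergence with the contracted Bianchi identity, that
\begin{equation*}
\mu := u\,\Delta u + (m-1)\,|\nabla u|^2 + \lambda u^2
\end{equation*}
is locally constant. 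Although $f$, and hence $u$, is only defined up to the additive (resp.\ multiplicative) ambiguity measured by the periods $\int_\gamma X^\flat$, the quantity $\nabla_a u/u = -\tfrac1m X_a$ is global, so the relation descends to a genuine statement on $M$: $\mu$ is a parallel section of the flat real line bundle with holonomy $\gamma \mapsto \exp(-\tfrac2m\int_\gamma X^\flat)$, and $\nu := \mu/u^2$ is an honest smooth function on $M$ satisfying $d\nu = \tfrac{2}{m}\,\nu\,X^\flat$.

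This is the step where the non-exactness enters, and it is the conceptual heart of the argument. If $\mu$ were not identically zero then, being a parallel section of a line bundle over the connected manifold $M$, it would be nowhere zero; hence $\nu$ would be a nowhere-vanishing function and $d\log|\nu| = \tfrac2m X^\flat$ would exhibit $X^\flat$ as exact, contrary to hypothesis. Therefore $\mu \equiv 0$. Feeding $\nu \equiv 0$ back through the trace of (\ref{QEE}) eliminates the scalar curvature and yields the two global identities $\text{div}\,X = |X|^2 + m\lambda$ and
\begin{equation*}
\tfrac12\,\Delta_f |X|^2 = |\nabla X|^2 + \tfrac1m |X|^4 + \lambda\,|X|^2,
\qquad \Delta_f := \Delta - \langle X, \nabla\,\cdot\,\rangle .
\end{equation*}

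Now I would combine an averaging identity with the maximum principle. Integrating $\text{div}\,X = |X|^2 + m\lambda$ over the closed manifold shows that the mean of $|X|^2$ is exactly $-m\lambda$ (in particular $\lambda < 0$ once $X \not\equiv 0$). On the other hand, at a maximum $p$ of $|X|^2$ the drift Laplacian satisfies $\Delta_f|X|^2(p) \le 0$, so the displayed identity forces $|X|^2(p)\big(\tfrac1m|X|^2(p) + \lambda\big) \le -|\nabla X|^2(p) \le 0$; since $m>0$ and $|X|^2(p)>0$ this gives $\max|X|^2 \le -m\lambda$. As the maximum of a function can never fall below its mean, we conclude $\max|X|^2 = -m\lambda$ and hence $|X|^2 \equiv -m\lambda$ is constant. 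Substituting back leaves $|\nabla X|^2 \equiv 0$, so $X$ is parallel and nowhere zero. It is precisely here that $m>0$ is essential, matching the appearance of genuinely warped examples for $m<0$ in Theorem~\ref{statnegm}.

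Finally, a parallel nowhere-zero vector field on a closed manifold produces, via the de Rham decomposition applied on the universal cover, an isometric splitting $\widetilde M = N \times \R$ with $\widetilde X$ tangent to the $\R$-factor and $\widetilde g = g_N \oplus dt^2$; normalising gives $\widetilde X = \sqrt{-m\lambda}\,\partial_t$. Restricting the QEE (now with $\mathcal{L}_X g = 0$) to directions tangent to $N$ shows $\text{Ric}(g_N) = \lambda g_N$, so $N$ is a simply connected negative Einstein manifold, while the $\partial_t\partial_t$ component is the identity $0 = \tfrac1m|X|^2 + \lambda$ already secured. The original manifold is then the quotient of $(\widetilde M, \widetilde g, \widetilde X)$ by the deck group $\pi_1(M)$, which acts by isometries preserving the product and $\widetilde X$. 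I expect the main obstacle to be making the bookkeeping of the multivalued potential $f$ (and the parallel-section interpretation of $\mu$) fully rigorous, together with verifying that the de Rham factor $N$ inherits simple-connectedness and completeness so that the global quotient description is exactly as stated.
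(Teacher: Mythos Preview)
The paper does not supply its own proof of this theorem: it is quoted from \cite{BGKW22,KL24,W23} and used as a known result, with the paper's new contribution being the $m<0$ analogue (Theorem~\ref{statnegm}). So there is no ``paper's proof'' to compare against.

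That said, your argument is correct and self-contained. A few remarks linking it to the paper's framework: your locally constant quantity $\mu = u\Delta u + (m-1)|\nabla u|^2 + \lambda u^2$ is precisely the constant $A$ of Corollary~\ref{corconst} specialised to the gradient case (set $K\equiv 0$ and $\Gamma = u^2$ in (\ref{const})), so the paper already records that $\mu$ is constant whenever $X^\flat$ is locally exact. Your observation that $\nu = \mu/u^2$ is a globally well-defined function with $d\nu = \tfrac{2}{m}\nu X^\flat$, and that non-exactness of $X^\flat$ therefore forces $\nu\equiv 0$, is the genuinely new input that goes beyond Corollary~\ref{corconst}; it is exactly the mechanism that distinguishes the closed-but-not-exact case from the gradient case. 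The remaining maximum-principle step (mean of $|X|^2$ equals $-m\lambda$ from integrating $\mathrm{div}\,X = |X|^2 + m\lambda$, while the drift-Bochner identity bounds the max by $-m\lambda$) is clean and the place where $m>0$ is used decisively, as you note. The only cosmetic point is that $|\nabla X|^2$ should be understood as $|\nabla X^\flat|^2$ (or equivalently $|\mathrm{Hess}\,f|^2$ on each local chart), which is unambiguous here since $X^\flat$ is closed; the de~Rham splitting and the check that the QEE restricts correctly to $N$ are standard and pose no difficulty.
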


By a quotient of a quasi-Einstein manifold in the theorem above we mean a solution satisfying $\widetilde{X} = \pi^* X$ and $\widetilde{g} = \pi^* g$, where $\pi: \widetilde{M} \to M$ is the quotient map.

Solutions with $m < 0$ turn out to be less rigid. In particular, the vector field $X$ need not be parallel, as the explicit examples in Section \ref{ewex} show. Using Theorem~\ref{RQEi} we can prove for $m \leq 2-n$ that the general solution is not a direct product of $\R$ with an Einstein manifold, but rather a warped product of $\R$ with a (gradient) quasi-Einstein manifold.

\begin{theorem} \label{statnegm}
    Let $(M,g,X)$ be a closed quasi-Einstein manifold of dimension $n$ with $m \leq 2-n$ and $X^\flat$ closed but not exact. Then $\lambda > 0$ and $(M,g,X)$ is a quotient of $(\widetilde{M},\widetilde{g},\widetilde{X})$ by a discrete group of isometries, where
    \begin{equation} \label{splitmneg}
        \widetilde{M} = N \times \R, \hspace{.5cm} \widetilde{g} = g_N \oplus e^{-\frac 2m u}\textup{d}t^2, \hspace{.5cm} \widetilde{X}^\flat = \textup{d}t + \textup{d}u.
    \end{equation}
    Here $t$ is a coordinate on $\R$, $u$ is a function on $N$ and $(N,g_N)$ is a simply connected quasi-Einstein manifold with vector field given by $X_N^\flat  = \frac{m+1}{m}\textup{d}u$ and parameters $(m+1,\lambda)$.
\end{theorem}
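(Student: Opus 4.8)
The plan is to produce the Killing field $K$ of Theorem~\ref{RQEi}, observe that the hypothesis $dX^\flat=0$ forces $K$ to be hypersurface-orthogonal, and then read the warped structure~(\ref{splitmneg}) off the resulting local splitting of $g$. Since $X^\flat$ is closed but not exact, $X$ is non-gradient, so Theorem~\ref{RQEi}(ii) (applicable because $m\le 2-n$) supplies a Killing field $K=\tfrac2m\Gamma X+\nabla\Gamma$ with $[K,X]=0$ and $\mathcal{L}_K\Gamma=0$, where $\Gamma=\Psi^{2/m}>0$. Two observations drive everything. First, differentiating $K^\flat=\tfrac2m\Gamma X^\flat+d\Gamma$ and using $dX^\flat=0$ gives $dK^\flat=\tfrac2m\,d\Gamma\wedge X^\flat$. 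Second, because $K$ is Killing and commutes with $X$ we have $\mathcal{L}_K X^\flat=0$, so Cartan's formula together with $dX^\flat=0$ shows $g(K,X)$ is a constant $C$. Since $g(K,X)=\tfrac{m}{2\Gamma}|K|^2$ (as in the proof of Theorem~\ref{negmthm}), we obtain $|K|^2=\tfrac{2C}{m}\Gamma$; as $\Gamma>0$ and $C\neq0$ (otherwise $K\equiv0$, contradicting that $X$ is non-gradient), the field $K$ is \emph{nowhere zero} and $\Gamma$ is a constant multiple of $|K|^2$.

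Next I would establish hypersurface orthogonality. Wedging $K^\flat=\tfrac2m\Gamma X^\flat+d\Gamma$ with $dK^\flat=\tfrac2m\,d\Gamma\wedge X^\flat$, every resulting term contains a repeated factor $X^\flat$ or $d\Gamma$, so $K^\flat\wedge dK^\flat=0$; equivalently, a direct computation with these two formulas shows that the $1$-form $\omega:=K^\flat/|K|^2$ is closed, with $\ker\omega=K^\perp$. A nowhere-zero hypersurface-orthogonal Killing field yields local coordinates $(x^i,t)$ with $K=\partial_t$ and $g=g_N+|K|^2\,dt^2$, in which $g_N$ and $|K|^2$ are $t$-independent (by $\mathcal{L}_K g=0$, and $\mathcal{L}_K X=0$) and $\omega=dt$. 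Writing $|K|^2=e^{-\frac2m u}$ defines a function $u$ on the leaves; substituting $\Gamma\propto|K|^2$ and $K^\flat=|K|^2\,dt$ into $X^\flat=\tfrac{m}{2\Gamma}(K^\flat-d\Gamma)$ gives $X^\flat=C\,dt+du$, which becomes the form~(\ref{splitmneg}) after rescaling $t$ and shifting $u$ by constants.

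I would then insert $\widetilde g=g_N+e^{-\frac2m u}dt^2$ and $\widetilde X^\flat=dt+du$ into~(\ref{QEE}) and use the warped-product curvature formulas (fibre $\mathbb{R}$, warping $w=e^{-u/m}$). The purely horizontal block reduces to
\[
\text{Ric}_N=\frac{m+1}{m^2}\,du\otimes du-\frac{m+1}{m}\,\text{Hess}_N u+\lambda g_N,
\]
which is exactly~(\ref{QEE}) on $N$ with parameter $m+1$ and $X_N^\flat=\tfrac{m+1}{m}du$, so $(N,g_N,X_N)$ is quasi-Einstein with parameters $(m+1,\lambda)$. The mixed block is then an identity, while the $dt^2$-block reduces to the scalar relation $w\Delta_N w+m|\nabla_N w|^2+\lambda w^2=-\tfrac1m$.

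Finally I would settle the sign of $\lambda$ and globalize. The leaves of $K^\perp$ are the fibres of the locally defined $t$; as $M$ is closed they are compact, and on the universal cover $\omega$ integrates to a global function $t$ whose $K$-flow trivializes the $\mathbb{R}$-factor, giving $\widetilde M=N\times\mathbb{R}$ with $N$ compact and simply connected, while $\pi_1(M)$ acts by isometries preserving $(\widetilde g,\widetilde X)$ and exhibits $M$ as the stated quotient. Integrating the $dt^2$-block over the closed $N$ and integrating by parts gives
\[
\lambda\!\int_N w^2=(1-m)\!\int_N|\nabla_N w|^2-\frac1m\,\text{Vol}(N),
\]
whose right-hand side is strictly positive since $m<0$, whence $\lambda>0$ (alternatively, for $\lambda\le0$ Theorem~\ref{negmthm} applied to $N$ would force $u$ constant, and the resulting parallel $X$ yields a contradiction). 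I expect the main obstacle to be this global step—verifying connectedness of the leaves and completeness of the $K$-flow so that the local product genuinely globalizes to $N\times\mathbb{R}$ with $N$ simply connected and compact, and that the deck group respects the structure—together with keeping the warped-product reduction's bookkeeping (the parameter shift $m\mapsto m+1$ and the warping $e^{-\frac2m u}$) consistent throughout.
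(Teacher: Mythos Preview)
Your strategy coincides with the paper's: produce the Killing field $K$ from Theorem~\ref{RQEi}, show it is hypersurface-orthogonal, split the universal cover as a warped product, and verify the QEE on the base. The only difference is packaging. The paper rescales to $h=\Gamma^{-1}g$, observes that the $h$-dual of $K$ is closed while $K$ remains $h$-Killing, hence $K$ is $h$-parallel, and then invokes the de Rham decomposition to split $\widetilde M$ isometrically for $h$; your computation that $\omega=K^\flat/|K|^2$ is closed is literally the same fact once you note (as you do) that $|K|^2=\tfrac{2C}{m}\Gamma$, so $\omega\propto\Gamma^{-1}K^\flat$ is the $h$-dual of $K$. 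The de Rham route disposes of the global step you flag as the main obstacle: a parallel nowhere-zero vector field on a complete Riemannian manifold gives a global product $N\times\R$ with $N$ simply connected, and the deck group acts by $h$-isometries preserving $K$, hence by $g$-isometries preserving $X$. One caution: your assertion that ``as $M$ is closed the leaves are compact'' is not justified and not needed---the theorem does not claim $N$ is compact (the paper establishes that separately afterwards via a Myers-type argument). This also undermines both of your arguments for $\lambda>0$: the integration over $N$ and the application of Theorem~\ref{negmthm} to $N$ require $N$ closed. The clean fix is the paper's: apply Theorem~\ref{negmthm} to $M$ itself at the outset, which immediately rules out $\lambda\le 0$ since $X^\flat$ is not exact.
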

\begin{proof}
    By Theorem \ref{negmthm} we may assume $\lambda > 0$. Theorem \ref{RQEi} implies that there is a function $\Gamma > 0$ such that $K$ defined by (\ref{Kansatz}) is a Killing vector satisfying $\mathcal{L}_K \Gamma = 0$. Hence $K$ is also a Killing vector of the rescaled metric $h = \Gamma^{-1}g$. Moreover, if $X^\flat$ is closed, so is the 1-form $\alpha = \Gamma^{-1}K^\flat$ which is $h$-dual to $K$. We find that $K$ is parallel with respect to $h$, which implies that the universal cover $\widetilde{M}$ of $M$ splits isometrically as a product $N \times \R$. Let us write $(\widetilde{g},\widetilde{X}, \widetilde{\Gamma})$ for the pullback of $(g,X,\Gamma)$ to $\widetilde{M}$. We choose a coordinate $t$ on $\R$ such that the pullback of the parallel form $\alpha$ is $\widetilde{\alpha} = \frac 2m\text{d}t$ and introduce a function $u$ by $\widetilde{\Gamma} = e^{-\frac 2mu}$. Then 
    \begin{equation*}
        \widetilde{X}^\flat = \frac m2(\widetilde{\alpha} - \widetilde{\Gamma}^{-1}\text{d}\widetilde{\Gamma}) = \text{d}t + \text{d}u.
\end{equation*}
After shifting $u$ by a constant if necessary (recall that $K$ and $\Gamma$ are only defined up to a multiplicative constant) we have $\tilde{g}_{tt} = e^{-\frac 2mu}$ and the universal cover $(\widetilde{M},\widetilde{g},\widetilde{X})$ is of the form (\ref{splitmneg}).

We now impose the QEE (\ref{QEE}) on the Ansatz (\ref{splitmneg}). Let us denote the pullback of the Ricci tensor Ric$(\tilde{g})$ to $N$ via the inclusion $\iota: N \to N \times \R$ by Ric$(\tilde{g})\vert_N$. Then we have
\begin{subequations}
    \begin{align}
        \text{Ric}(\tilde{g})\vert_N &= \text{Ric}(g_N) - \frac{1}{m^2} \nabla u \otimes \nabla u + \frac 1m\text{Hess}_N(u), \\
        \text{Ric}(\tilde{g})(\partial_t,\partial_t) &= \frac 1me^{-\frac 2mu}\left(\Delta_N u - \frac 1m\vert \nabla u \vert_N^2\right).  \label{rictt}
    \end{align}
\end{subequations}
    The QEE impose 
\begin{subequations}
    \begin{align}
        \text{Ric}(\tilde{g})\vert_N &= \frac 1m\nabla u \otimes \nabla u - \text{Hess}_N(u) + \lambda g_N, \\
        \text{Ric}(\tilde{g})(\partial_t,\partial_t) &= \frac 1m + \frac 1me^{-\frac 2mu}\vert \nabla u \vert_N^2 + \lambda e^{-\frac 2mu}.  \label{rictt2}
    \end{align}
\end{subequations}
    The cross terms $\text{Ric}(\tilde{g})(Y,\partial_t)$ with $Y \in \Gamma(TN)$ vanish. Equating the components tangent to $N$ gives 
    \begin{equation*}
        \text{Ric}(g_N) = \frac{m+1}{m^2}\nabla u \otimes \nabla u - \frac{m+1}{m}\text{Hess}_N(u) + \lambda g_N.
    \end{equation*}
    which are the QEE\footnote{Note that for $m = -1$ the manifold $(N,g_N)$ is Einstein. In this case the QEE for (\ref{splitmneg}) are equivalent to the PDE (\ref{uconst}) on the Einstein manifold $(N,g_N)$ (see Section \ref{EW}).} for $g_N$ with $X^\flat_N = \frac{m+1}{m}\text{d}u$ and parameters $(m+1,\lambda)$. 
    \end{proof}
Observe that the $tt$-component of the QEE for $(\widetilde{M},\widetilde{g},\widetilde{X})$ does not impose any further restrictions on the quasi-Einstein manifold $(N,g_N,X_N)$ if $m \neq -1$. Indeed, equating (\ref{rictt}) and (\ref{rictt2}) yields
    \begin{equation} \label{uconst}
        \Delta_N u = m\lambda + e^{\frac 2m u} + \frac{m+1}{m}\vert \nabla u \vert_N^2.
    \end{equation}
    Writing $\Gamma_N = Ce^{-\frac 2mu}$ for some constant $C > 0$, (\ref{uconst}) is equivalent to
    \begin{equation*}
        -\frac 1m C = \frac 12 \Delta_N \Gamma_N + \lambda \Gamma_N + \frac{m-1}{4\Gamma_N}\vert \nabla \Gamma_N \vert_N^2.
    \end{equation*}
    This is the statement that the function $A$ defined in~(\ref{const}) equals $-\frac 1mC$. An application of the maximum principle shows that $A$ is positive for (closed) gradient quasi-Einstein manifolds with $\lambda > 0$, so we may always set $C = -mA$. Therefore, if $(N,g_N,X_N)$ solves the QEE with parameters $(m+1,\lambda)$ and $X_N = -\frac m2 \Gamma_N^{-1}\nabla\Gamma_N$, then the Ansatz (\ref{splitmneg}) solves the QEE with parameters $(m,\lambda)$, provided $u$ is defined by
    \begin{equation*}
    \Gamma_N = -mAe^{-\frac 2mu}.
    \end{equation*}

If $u$ is constant (i.e. the quasi-Einstein structure on $N$ is trivial) we recover the $m < 0$ analogue of the solutions in Theorem \ref{statposm}. As $\lambda$ is positive for $m < 0$, Lemma~\ref{grad} does not apply and further examples with for which $u$ is not constant can be constructed as follows. For $m > 0$ and $\lambda > 0$ non-trivial compact gradient quasi-Einstein manifolds are known to exist from the correspondence with warped product Einstein metrics (see \cite{B98,LPP04}). Using the duality between gradient quasi-Einstein structures with $m = \mu$ and $m = 2-n-\mu$ \cite{C12} these yield solutions with $m < 2-n$. Solutions with $2-n < m < 0$ can also be obtained using the warped product construction in \cite[Proposition 5.14]{C12}.

The correspondence between quasi-Einstein structures with $m = \mu$ and $m = 2-n-\mu$ implies that the manifolds $N$ in Theorem \ref{statnegm} admit complete quasi-Einstein metrics with $m > 0$ and $\lambda > 0$. By a quasi-Einstein version of Myers's theorem \cite{Q97} (see \cite{L10} for the non-gradient case), $N$ must be compact. Since compact two-dimensional gradient quasi-Einstein manifolds are trivial, we conclude
\begin{cor}
    Let $(M,g,X)$ be a closed quasi-Einstein manifold of dimension $3$ with $m < -1$ and $X^\flat$ closed but not exact. Then $\lambda > 0$ and $(M,g,X)$ is a quotient of $(\widetilde{M},\widetilde{g},\widetilde{X})$ by a discrete group of isometries, where
    \begin{equation*}
        \widetilde{M} = S^2 \times \R, \hspace{.5cm} \widetilde{g} = (\lambda^{-1} g_{S^2}) \oplus \textup{d}t^2, \hspace{.5cm} \widetilde{X} = \sqrt{-m\lambda} \: \frac{\partial}{\partial t}.
    \end{equation*}
Here $t$ is a coordinate on $\R$ and $g_{S^2}$ is the round metric on $S^2$.
\end{cor}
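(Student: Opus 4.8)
The plan is to obtain the geometry by feeding $n=3$ into Theorem \ref{statnegm} and then showing that the two-dimensional fibre it produces is forced to be trivial. Since $2-n = -1$ for $n=3$, the hypothesis $m < -1$ gives $m < 2-n$, so Theorem \ref{statnegm} applies: $\lambda > 0$ and $(M,g,X)$ is a quotient of the warped product $\widetilde{M} = N \times \R$ of (\ref{splitmneg}), where $(N,g_N)$ is a simply connected two-dimensional \emph{gradient} quasi-Einstein manifold with $X_N^\flat = \frac{m+1}{m}\td u$ and parameters $(m+1,\lambda)$. The entire corollary then reduces to proving that the warping function $u$ is constant: once $u$ is constant, the structure on $N$ is trivial, $N$ is Einstein, and a rescaling of $t$ collapses (\ref{splitmneg}) to the asserted product.

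The main obstacle is to show that $N$ is \emph{compact}, after which triviality is immediate because compact two-dimensional gradient quasi-Einstein surfaces are trivial by \cite[Theorem 1.2]{CSW11} (valid for all $m\neq 0$). I would establish compactness in three steps. First, $(N,g_N)$ is complete: it is the fibre $N \times \{0\}$ of the universal cover $\widetilde{M}$, which is complete because $M$ is closed, and since $N\times\{0\}$ is a closed submanifold on which $\widetilde{g}$ restricts to $g_N$, every $g_N$-Cauchy sequence converges inside $N$. Second, although the parameter $m+1$ of $N$ is negative (this is exactly where the strict inequality $m<-1$, giving $m+1<0$, is needed), the duality of \cite{C12} between gradient quasi-Einstein structures with parameters $\mu$ and $2-\dim N-\mu = -\mu$ endows $N$ with a complete gradient quasi-Einstein metric of \emph{positive} parameter $-(m+1)>0$ and $\lambda>0$. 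Third, the quasi-Einstein version of Myers' theorem \cite{Q97} (see \cite{L10}) applies to this complete, positive-parameter, positive-$\lambda$ structure and forces $N$ to be compact. The delicate point is the second step: one must verify that the completeness established in the first step is genuinely inherited by the dual quasi-Einstein metric on which Myers' theorem is invoked, which requires understanding how the duality of \cite{C12} transforms $g_N$ and $u$.

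With $N$ compact, \cite[Theorem 1.2]{CSW11} gives $X_N \equiv 0$, hence $\td u = 0$, so $u$ is constant and $N$ is Einstein with $\text{Ric}(g_N)=\lambda g_N$, $\lambda>0$. Being compact, simply connected and two-dimensional, $N$ is the round sphere with $g_N = \lambda^{-1}g_{S^2}$. Finally, evaluating (\ref{uconst}) at the constant value of $u$ yields $e^{\frac 2m u} = -m\lambda > 0$, and rescaling $t$ so as to normalise the warping coefficient $e^{-\frac 2m u}$ to $1$ turns (\ref{splitmneg}) into $\widetilde{g} = \lambda^{-1}g_{S^2}\oplus \td t^2$ with $\widetilde{X}^\flat = \td t$, whose $g$-dual is $\widetilde{X} = \sqrt{-m\lambda}\,\partial_t$, exactly as claimed.
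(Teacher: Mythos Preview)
Your approach is essentially the same as the paper's: apply Theorem~\ref{statnegm}, use the $\mu \leftrightarrow 2-\dim N-\mu$ duality of \cite{C12} to pass to a positive-parameter structure, invoke the quasi-Einstein Myers theorem \cite{Q97,L10} to force $N$ compact, and then conclude triviality from \cite{CSW11}. You in fact supply more detail than the paper (completeness of $N$, the final unpacking via (\ref{uconst})), and you correctly flag the point about completeness being inherited under the duality, which the paper also leaves implicit. One small slip: after rescaling $t$ you write $\widetilde{X}^\flat = \td t$, but it is $\sqrt{-m\lambda}\,\td t$; your final dual vector field is nonetheless correct.
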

\noindent For $m = -1$ every solution is locally equivalent to one of the examples presented in Section \ref{ewex}.

\section{Einstein--Weyl structures} \label{EW}
On a manifold of dimension $n \geq 3$ the QEE with $m = 2-n$ can be identified with the Einstein-Weyl equation in a special frame. In this section we use this correspondence to classify closed quasi-Einstein manifolds with $m = 2-n$ and construct new solutions on $M = S^2 \times S^1$.

\begin{definition}
A \textit{Weyl structure} on a manifold $M$ is a conformal structure $[g]$ with a torsion-free connection $D$ satisfying 
\begin{equation} \label{ws}
    D g = \omega \otimes   g
\end{equation}
for some $g \in [g]$ and $\omega \in \Omega^1(M)$. A Weyl structure $([g],D)$ satisfies the \textit{Einstein-Weyl condition} if the symmetric part of the Ricci tensor Ric$(D)$ is proportional to $[g]$.
\end{definition}
We may view a Weyl structure as being defined by the pair $(g,\omega)$, since the condition (\ref{ws}) then determines $D$. If we pick a different representative $e^ug$  of $[g]$ for some $u \in C^\infty(M)$, the 1-form $\omega$ satisfying (\ref{ws}) changes to $\omega + \text{d}u$. Hence $(e^ug,\omega + \text{d}u)$ corresponds to the same Weyl structure as $(g,\omega)$. Choose a frame $(g,\omega)$ and define a vector field $X$ by $X^\flat = \frac{n-2}{2}\omega$. The Einstein-Weyl condition then reads \cite{PT93}
\begin{equation} \label{ewg}
    \text{Ric}(g) = \frac{1}{2-n}X^\flat \otimes X^\flat - \frac 12\mathcal{L}_X g + \Lambda g.
\end{equation}
Here $\Lambda = \Lambda[g,\omega]$ is a function on $M$ that depends on the frame in the following way. 

\begin{lemma} \label{lemconf}
    Under a change of frame to $(\tilde{g},\widetilde{\omega})= (e^ug, \omega + \textup{d}u)$, the function $\Lambda$ transforms as
    \begin{equation} \label{lambdaconf}
        \Lambda[\tilde{g},\widetilde{\omega}] = \frac 12e^{-u}(-\Delta u + \mathcal{L}_X u+2\Lambda[g,\omega]).
    \end{equation}
\end{lemma}
\begin{proof}
Write $\widetilde{\nabla}$ for the Levi-Civita connection of $\tilde{g}$ and let $\widetilde{X}$ be the vector field $\tilde{g}$-dual to $\frac{n-2}{2}\widetilde{\omega}$. For any 1-form $\alpha$, we have 
\begin{align*}
    \widetilde{\nabla} \alpha = \nabla \alpha - \frac 12 \alpha \otimes \nabla u - \frac 12 \nabla u \otimes \alpha + \frac 12\langle \alpha^\sharp, \nabla u\rangle \: g. 
\end{align*}
All quantities on the RHS are calculated with respect to $g$ and $\langle \cdot , \cdot\rangle$ denotes the pairing between vectors and covectors. Using this we calculate
\begin{align*}
    \text{Hess}_g(u) &= \text{Hess}_{\tilde{g}}(u) + \nabla u \otimes \nabla u - \frac 12\vert \nabla u \vert^2_g\:g, \\
    \mathcal{L}_X g &= \mathcal{L}_{\widetilde{X}}\tilde{g} - (n-2)\text{Hess}_{\tilde{g}}(u) + \frac{n-2}{2}(\nabla u \otimes \widetilde{\omega} + \widetilde{\omega} \otimes \nabla u) - (n-2)\nabla u \otimes \nabla u - \langle X,\nabla u\rangle g.
\end{align*}
The Ricci tensor transforms as
\begin{equation*}
    \text{Ric}(\tilde{g}) = \text{Ric}(g) + \frac{2-n}{4}\left(2\:\text{Hess}_g(u) - \nabla u \otimes \nabla u\right) - \frac 14\left(2\Delta_g u +(n-2)\vert \nabla u \vert^2_g\right)g.
\end{equation*}
Using (\ref{ewg}) with $X^\flat = \frac{n-2}{2}(\widetilde{\omega} - \text{d}u)$, we find that the Einstein-Weyl condition in the new frame reads
\begin{equation*}
    \text{Ric}(\tilde{g}) = \frac{1}{2-n}\left(\tfrac{n-2}{2}\widetilde{\omega}\right)\otimes\left(\tfrac{n-2}{2}\widetilde{\omega}\right) - \frac 12\mathcal{L}_{\widetilde{X}}\widetilde{g} + \frac 12e^{-u}(-\Delta_g u + \langle X, \nabla u \rangle + 2\Lambda[g,\omega])\tilde{g}.
\end{equation*}
From here we read off (\ref{lambdaconf}).
\end{proof}

\subsection{Quasi-Einstein frames} Equation (\ref{ewg}) shows the QEE with $m = 2-n$ describe Einstein-Weyl structures in a \textit{quasi-Einstein frame} where the function $\Lambda$ is a constant $\lambda$. To find such frames on a compact manifold, it is useful to first consider the equations in another frame constructed in \cite{T92}.
\begin{theorem} \textup{(\!\!\cite{T92})}\label{Todthm}
    For any Weyl structure on a closed manifold $M$ there exists a frame $(g,\omega)$ (unique up to homothety) such that the vector field $\omega^\sharp$ is divergence-free with respect to $g$. If the Weyl structure satisfies the Einstein-Weyl condition, $\omega^\sharp$ is a Killing vector of $g$.
\end{theorem}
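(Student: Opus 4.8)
The statement splits into two logically separate assertions, and I would prove them independently: first the existence and uniqueness up to homothety of the \emph{Gauduchon frame} in which $\omega^\sharp$ is divergence-free, and then the upgrade from divergence-free to Killing once the Einstein-Weyl condition is imposed.

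For the Gauduchon frame the plan is to fix the conformal gauge by solving a single scalar elliptic PDE. Starting from an arbitrary frame $(g,\omega)$, a conformal change $g\mapsto e^ug$, $\omega\mapsto\omega+\text{d}u$ alters the codifferential of a $1$-form $\alpha$ by $\delta_{e^ug}\alpha = e^{-u}\big(\delta_g\alpha - \tfrac{n-2}{2}\langle\text{d}u,\alpha\rangle_g\big)$, which follows directly from the conformal transformation of the Christoffel symbols. Imposing $\delta_{e^ug}(\omega+\text{d}u)=0$ gives a nonlinear equation in $u$, but the substitution $v=e^{\frac{n-2}{2}u}$ (legitimate since $n\geq 3$) linearises it to
\[
 Lv := \Delta_g v + \tfrac{n-2}{2}\langle \text{d}v,\omega\rangle_g - \tfrac{n-2}{2}(\delta_g\omega)\,v = 0 .
\]
A short integration by parts shows that the formal adjoint is the drift Laplacian $L^\ast\psi = \Delta_g\psi - \tfrac{n-2}{2}\langle\text{d}\psi,\omega\rangle_g$, which has no zeroth-order term and therefore annihilates the constants; by the strong maximum principle $\ker L^\ast$ consists only of constants. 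Ellipticity on the closed manifold $M$ then gives $\dim\ker L = \dim\ker L^\ast = 1$, and the Krein--Rutman theorem (the principal eigenfunction of a second-order elliptic operator is positive and simple) guarantees that $\ker L$ is spanned by a \emph{positive} $v$. This $v$ furnishes the required conformal factor, and one-dimensionality of $\ker L$ is precisely the claimed uniqueness up to a constant rescaling, i.e. up to homothety.

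For the Einstein-Weyl implication I would work in the Gauduchon frame, so that $\nabla^a\omega_a=0$ and the symmetric tensor $S_{ab}:=\nabla_{(a}\omega_{b)}$ is trace-free; the target is $S=0$. Writing (\ref{ewg}) with $X^\flat=\tfrac{n-2}{2}\omega$ gives $R_{ab} = -\tfrac{n-2}{4}\omega_a\omega_b - \tfrac{n-2}{2}S_{ab} + \Lambda g_{ab}$, and I would assemble two integral identities. The first is the Weitzenb\"ock identity for the co-closed $1$-form $\omega$, which after integration reads $\int_M|S|^2 = \tfrac12\int_M|\text{d}\omega|^2 - \int_M \text{Ric}(\omega,\omega)$; feeding in the Einstein-Weyl equation and using $\int_M S_{ab}\omega^a\omega^b = \tfrac12\int_M\langle\omega^\sharp,\nabla|\omega|^2\rangle = 0$ (integration by parts, crucially exploiting $\nabla^a\omega_a=0$) turns this into $\int_M|S|^2 = \tfrac12\int_M|\text{d}\omega|^2 + \tfrac{n-2}{4}\int_M|\omega|^4 - \int_M\Lambda|\omega|^2$.

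The decisive second input is the differential consequence of the Einstein-Weyl equation. Applying the contracted Bianchi identity $\nabla^aR_{ab}=\tfrac12\nabla_bR$ to the expression for $R_{ab}$ produces a formula for $\nabla_b\Lambda$ in terms of $\omega$, $S$ and $\text{d}\omega$; contracting it with $\omega^b$ and integrating -- where again every surviving term survives only because $\nabla^a\omega_a=0$ forces $\int_M\omega^b\nabla_b\Lambda$, $\int_M\omega^b\nabla_b|\omega|^2$ and $\int_M\Delta|\omega|^2$ to vanish -- yields the complementary relation $\int_M\Lambda|\omega|^2 = \int_M|S|^2 + \tfrac12\int_M|\text{d}\omega|^2 + \tfrac{n-2}{4}\int_M|\omega|^4$. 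Substituting this into the identity above collapses everything to $\int_M|S|^2 = -\int_M|S|^2$, whence $S=0$ and $\omega^\sharp$ is Killing. I expect the main obstacle to be exactly the bookkeeping of this Bianchi step: one must differentiate the Einstein-Weyl equation correctly, the Ricci identity applied to $\nabla^aS_{ab}$ reintroduces $\text{Ric}$ (which has to be eliminated again through the Einstein-Weyl equation), and the non-constant $\Lambda$ prevents any purely algebraic shortcut. It is the interplay of the two integrated identities, not either alone, that forces $S=0$, and every cancellation relies on having first passed to the Gauduchon frame.
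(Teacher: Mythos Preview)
Your proof is correct and follows the same strategy the paper sketches (which is Tod's argument in \cite{T92}): for the Gauduchon frame you arrive at exactly the linear elliptic PDE the paper cites as equation~(\ref{pdeex}), and your Krein--Rutman argument for a positive solution is the standard one; for the Killing step the paper says only that the contracted Bianchi identity together with the Einstein--Weyl equation exhibits $|\mathcal{L}_{\omega^\sharp}g|^2$ as a total divergence in the Gauduchon frame, which is what your computation ultimately delivers.

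One economy worth noting: the Weitzenb\"ock identity is redundant. In the Bianchi step, instead of applying the Ricci identity to $\nabla^aS_{ab}$ (which reintroduces $\text{Ric}$ and forces a second substitution of the Einstein--Weyl equation), simply integrate $\int_M\omega^b\nabla^aS_{ab}$ by parts to obtain $-\int_M|S|^2$. Contracting the Bianchi consequence
\[
\nabla_b\Lambda = -\tfrac12\omega^a\nabla_a\omega_b - \nabla^aS_{ab} + \tfrac14\nabla_b|\omega|^2
\]
with $\omega^b$ and integrating then gives $\int_M|S|^2=0$ in one stroke, every other term vanishing by $\nabla^a\omega_a=0$. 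Your two-identity route reaches the same conclusion only because the Ricci-identity detour folds the Weitzenb\"ock content into your second identity, which the first identity then cancels.
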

We refer to this frame as the \textit{Gauduchon frame}. The proof of Theorem \ref{Todthm} is very similar to the arguments in Section \ref{tensoridsec}. It relies on the same existence result for the PDE (\ref{pdeex}) to find a frame in which $\omega^\sharp$ is divergence-free and then uses the contracted Bianchi identity and the Einstein-Weyl condition to express $\vert \mathcal{L}_{\omega^\sharp}g \vert^2$ as a total divergence in this frame.

Writing $K$ for the Killing vector in the Gauduchon frame defined by $K^\flat = \frac{n-2}{2}\omega$, the Einstein-Weyl condition in this frame reduces to
\begin{equation} \label{gauew}
    \text{Ric}(g) = \frac{1}{2-n}K^\flat \otimes K^\flat + \Lambda g.
\end{equation}
Note that $\mathcal{L}_K \Lambda = 0$ since $K$ is a Killing vector. Starting from the Gauduchon frame, let us look for a function $u \in C^\infty(M)$ such that $(e^ug, \omega + \text{d}u)$ is in a quasi-Einstein frame with constant $\lambda$. Using Lemma \ref{lemconf}, we find that $u$ must satisfy
\begin{equation} \label{KW}
    \Delta u = 2\Lambda + \mathcal{L}_K u - 2\lambda e^u.
\end{equation}
This Kazdan-Warner type PDE has been studied in the context of the Chern-Yamabe problem \cite{ACS17}. In particular, it is shown in \cite{Y24} that (\ref{KW}) has a unique solution on a compact manifold if $\lambda < 0$ and the integral of $\Lambda$ is negative. It will follow from Theorem \ref{negmthm} that $\lambda$ and the integral of $\Lambda$ need to be positive in order to find (non-trivial) quasi-Einstein frames\footnote{In fact, using a similar argument as in the proof of Theorem \ref{negmthm} applied to (\ref{gauew}) it can be shown that $\Lambda$ is strictly positive pointwise in dimension $n \geq 4$ if $K \not\equiv 0$.}. Solving (\ref{KW}) is more subtle in this positive curvature case. If a solution exists, we have the following.
\begin{lemma} \label{uinv}
    Let $(M,g)$ be a closed Riemannian manifold with Killing vector $K$. If $u \in C^\infty(M)$ satisfies (\ref{KW}) for some constant $\lambda$ and function $\Lambda$ with $\mathcal{L}_K \Lambda = 0$, then $\mathcal{L}_K u = 0$.
\end{lemma}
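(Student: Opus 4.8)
The plan is to prove that $v:=\mathcal{L}_K u$ vanishes identically. First I would differentiate the equation (\ref{KW}) along the flow of $K$. Because $K$ is Killing its flow is by isometries, so $\mathcal{L}_K$ commutes with $\Delta$; combined with the hypothesis $\mathcal{L}_K\Lambda=0$ and $\mathcal{L}_K(e^u)=e^u v$ this produces the linear elliptic equation
\[
\Delta v-\mathcal{L}_K v+2\lambda e^u v=0.
\]
It is tempting to conclude by the maximum principle, since at an interior extremum of $v$ the gradient and hence $\mathcal{L}_K v$ vanish, leaving $\Delta v=-2\lambda e^u v$. This disposes of the case $\lambda\le 0$ immediately. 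However, integrating (\ref{KW}) gives $\lambda\int_M e^u=\int_M\Lambda$, and in the situation of interest $\int_M\Lambda>0$ (cf. (\ref{gauew})), so $\lambda>0$; then the zeroth-order coefficient $2\lambda e^u$ has the wrong sign and no naive comparison argument applies. This is the main obstacle.

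To get around it I would pair the two equations and integrate over the closed manifold, using that $\Delta$ is self-adjoint while $\mathcal{L}_K$ is skew-adjoint on $L^2(M)$ because $\operatorname{div}K=0$. Multiplying the equation for $v$ by $u$ and integrating gives
\[
\int_M v\,\Delta u=-\int_M v^2,
\]
once one knows that $\int_M u\,e^u v=0$ and uses $\int_M u\,\mathcal{L}_K v=-\int_M v\,\mathcal{L}_K u=-\int_M v^2$. On the other hand, multiplying (\ref{KW}) by $v$ and integrating gives $\int_M v\,\Delta u=\int_M v^2$, using $\int_M v\,\mathcal{L}_K u=\int_M v^2$ together with the vanishing of $\int_M\Lambda v$ and $\int_M e^u v$. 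Equating the two values of $\int_M v\,\Delta u$ forces $\int_M v^2=0$, hence $v\equiv 0$, which is the claim.

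The whole argument hinges on the fact that every $\lambda$- and $\Lambda$-dependent term is a total divergence. All three vanishing integrals follow from $\operatorname{div}K=0$ and $\mathcal{L}_K\Lambda=0$: one has $\int_M e^u v=\int_M\mathcal{L}_K(e^u)=0$ and $\int_M\Lambda v=-\int_M u\,\mathcal{L}_K\Lambda=0$, while the decisive identity is $\int_M u\,e^u v=\int_M u\,\mathcal{L}_K(e^u)=-\int_M e^u\,\mathcal{L}_K u=-\int_M e^u v=0$. Establishing this last cancellation is where the real work lies, as it is precisely what eliminates the dependence on the sign of $\lambda$ and makes the conclusion hold for every constant $\lambda$, not merely for $\lambda\le 0$.
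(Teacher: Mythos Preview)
Your argument is correct and rests on the same mechanism as the paper's proof: both multiply (\ref{KW}) by $v=\mathcal{L}_K u$ and exploit the skew-adjointness of $\mathcal{L}_K$ to conclude $\int_M v^2=0$. The paper, however, does this in one stroke rather than two: after multiplying (\ref{KW}) by $v$ it observes the pointwise identity
\[
v^2=\operatorname{div}\bigl(v\,\nabla u\bigr)+\mathcal{L}_K\!\left(2\lambda e^{u}-2\Lambda u-\tfrac12|\nabla u|^2\right),
\]
so integration immediately gives $\int_M v^2=0$. The key step is $v\,\Delta u=\operatorname{div}(v\nabla u)-\tfrac12\mathcal{L}_K|\nabla u|^2$, which encodes exactly the same skew-adjointness you use but avoids introducing the auxiliary elliptic equation for $v$ and the second pairing with $u$. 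In particular, your detour through $\Delta v-\mathcal{L}_K v+2\lambda e^{u}v=0$ is not needed: the identity $\int_M(\mathcal{L}_K u)\Delta u=0$ follows directly from $[\mathcal{L}_K,\Delta]=0$ and skew-adjointness, and combined with your second computation this already finishes the proof. Your discussion of the maximum principle and the sign of $\lambda$ is a nice piece of motivation, though ultimately neither proof distinguishes the signs.
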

\begin{proof}
We multiply (\ref{KW}) by $\mathcal{L}_K u$ and use the fact that $K$ is Killing to write
\begin{equation*}
    (\Delta u)\mathcal{L}_K u =  \text{div}((\mathcal{L}_Ku)(\nabla u)) - g(\nabla u, \mathcal{L}_K(\nabla u)) = \text{div}((\mathcal{L}_Ku)(\nabla u)) - \frac 12\mathcal{L}_K(\vert \nabla u \vert^2).
\end{equation*}
Using $\mathcal{L}_K \Lambda = 0$, the resulting equation reads
\begin{equation*}
    (\mathcal{L}_K u)^2 = \text{div}\left((\mathcal{L}_K u )(\nabla u)\right) + \mathcal{L}_K\left(2\lambda e^u - 2\Lambda u - \tfrac 12\vert \nabla u \vert^2\right).
\end{equation*}
We now integrate this identity over the compact manifold $M$ using the divergence theorem. Since $K$ is divergence-free, the RHS integrates to zero. As the integrand on the LHS is non-negative, we must have $\mathcal{L}_K u = 0$. 
\end{proof}
\noindent We summarise our results in the theorem below.

\begin{theorem} \label{ewthm}
Any quasi-Einstein structure $(g,X)$ on a closed manifold $M$ with $m = 2-n$ is of the form
\begin{equation} \label{qeframe}
    g = e^u g_0, \hspace{.8cm} g(X, \cdot) = g_0(K, \cdot) +\frac{n-2}{2}\textup{d}u,
\end{equation}
with $u \in C^\infty(M),\: K \in \mathfrak{X}(M)$ and $g_0$ a Riemannian metric. The triple $(g_0,K,u)$ satisfies $\mathcal{L}_K g_0 = 0$ and $\mathcal{L}_K u = 0$, together with 
\begin{subequations}
\begin{align} 
    \textup{Ric}(g_0) &= \frac{1}{2-n}K^\flat \otimes K^\flat + \Lambda g_0,\label{gaudEW}\\
    \Delta_0 u &= 2\Lambda - 2\lambda e^u. \label{pdeu}
\end{align}
\end{subequations}
Here $\Lambda \in C^\infty(M)$ and $K^\flat$ denotes the 1-form $g_0$-dual to $K$.
\end{theorem}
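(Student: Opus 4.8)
The plan is to read Theorem~\ref{ewthm} as an assembly of the frame-theoretic facts already established in this section, the key identification being equation~(\ref{ewg}): for $m = 2-n$ the QEE~(\ref{QEE}) are precisely the Einstein--Weyl condition written in a frame in which the function $\Lambda$ is the constant $\lambda$. So the first step is to observe that a quasi-Einstein structure $(g,X)$ with $m=2-n$ determines a Weyl structure $([g],D)$, where $D$ is fixed by~(\ref{ws}) with $\omega = \tfrac{2}{n-2}X^\flat$, and that $g$ itself is a \emph{quasi-Einstein frame}, meaning $\Lambda[g] = \lambda$. Everything to be proved is a statement about this single Weyl structure viewed in two different conformal representatives.

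Next I would invoke Tod's theorem (Theorem~\ref{Todthm}) to produce the Gauduchon frame $(g_0,\omega_0)$ in the conformal class $[g]$, unique up to homothety, in which $\omega_0^\sharp$ is divergence-free; since the Einstein--Weyl condition holds, the same theorem upgrades this to the statement that the vector field $K$ defined by $K^\flat = \tfrac{n-2}{2}\omega_0$ is a Killing vector of $g_0$. This gives $\mathcal{L}_K g_0 = 0$ at once, and specialising~(\ref{ewg}) to the Gauduchon frame (where the $\mathcal{L}_K g_0$ term drops) yields equation~(\ref{gauew}), which is exactly~(\ref{gaudEW}).

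Because the quasi-Einstein frame $(g,\omega)$ and the Gauduchon frame $(g_0,\omega_0)$ represent the same conformal class, they differ by $g = e^u g_0$ for some $u \in C^\infty(M)$, and correspondingly $\omega = \omega_0 + \mathrm{d}u$. Passing through the definitions $g(X,\cdot) = X^\flat = \tfrac{n-2}{2}\omega$ and $K^\flat = \tfrac{n-2}{2}\omega_0$ then gives $g(X,\cdot) = g_0(K,\cdot) + \tfrac{n-2}{2}\mathrm{d}u$, which is the second relation in~(\ref{qeframe}). The remaining content is the PDE for $u$: imposing that $g = e^u g_0$ is a quasi-Einstein frame means $\Lambda[e^u g_0] = \lambda$, and substituting this into the conformal transformation law~(\ref{lambdaconf}) produces the Kazdan--Warner equation~(\ref{KW}). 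Applying the Lemma immediately preceding the theorem, whose integration-by-parts argument over the closed manifold forces $\mathcal{L}_K u = 0$, then eliminates the first-order term from~(\ref{KW}) and leaves precisely~(\ref{pdeu}).

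The genuinely substantive inputs are Tod's existence result and the Lemma giving $\mathcal{L}_K u = 0$, both of which I may cite; modulo these, the work is bookkeeping the conformal weights correctly, namely the factor $\tfrac{n-2}{2}$ relating $X^\flat$ to $\omega$, the shift $\omega \mapsto \omega + \mathrm{d}u$, and the law~(\ref{lambdaconf}), so that the constancy of $\Lambda$ in the quasi-Einstein frame translates into~(\ref{pdeu}) rather than a nearby equation. I expect the decoupling supplied by $\mathcal{L}_K u = 0$ to be the conceptual crux: it is the one place where compactness of $M$ is essential, and without it the $\mathcal{L}_K u$ term survives in~(\ref{KW}), obstructing the clean split of the data into the Killing/Einstein--Weyl pair $(g_0,K)$ on the one hand and the single scalar equation for $u$ on the other.
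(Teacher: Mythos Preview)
Your proposal is correct and follows essentially the same route as the paper: the theorem there is stated as a summary of the preceding discussion, which proceeds exactly as you outline---identify the QEE with $m=2-n$ as the Einstein--Weyl condition in a quasi-Einstein frame, pass to the Gauduchon frame via Tod's theorem to obtain $(g_0,K)$ and equation~(\ref{gaudEW}), relate the frames by $g=e^u g_0$ to get~(\ref{qeframe}) and~(\ref{KW}) via~(\ref{lambdaconf}), and then apply the Lemma to kill the $\mathcal{L}_K u$ term and arrive at~(\ref{pdeu}). Your identification of compactness entering only through Tod's theorem and the Lemma is also on the mark.
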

\begin{proof}
    Any quasi-Einstein structure $(g,X)$ satisfies the Einstein-Weyl condition (\ref{ewg}), and therefore the pair $(g,\omega)$ with $g(X,\cdot) = \frac{n-2}{2}\omega$ defines an Einstein-Weyl structure. By Theorem \ref{Todthm} there exists a Gauduchon frame $(g_0,\omega_0)$  in which the vector field $K$ $g_0$-dual to $\frac{n-2}{2}\omega_0$ is Killing and the Einstein-Weyl condition reduces to (\ref{gaudEW}). If we define the function $u$ by $g = e^ug_0$, the data $(g,X)$ is related to $(g_0,K)$ by (\ref{qeframe}). Moreover, $u$ satisfies equation~(\ref{KW}), which reduces to (\ref{pdeu}) since $\mathcal{L}_K u = 0$ by Lemma \ref{uinv}.
\end{proof}
In particular, $K$ is also a Killing vector of the quasi-Einstein metric $g$. Up to a multiplicative constant, $K$ is of the form (\ref{Kansatz}) with $\Gamma = e^{u}$ and $m = 2-n$. It also satisfies $[K,X] = 0$, which completes the proof of Theorem \ref{RQEi} for this value of $m$. Note that Theorem \ref{negmthm} implies that any (non-trivial) solution has $\lambda > 0$.
\subsection{Examples} \label{ewex}
    If $X$ is a gradient, we necessarily have $K \equiv 0$. Equation (\ref{gaudEW}) then implies that $g_0$ is $\Lambda$-Einstein (in particular, $\Lambda$ is constant). Hence any gradient solution to the $m = 2-n$ QEE is conformal to Einstein and obtained by solving (\ref{pdeu}) on a compact $\Lambda$-Einstein manifold. This result can be viewed as a special case of \cite[Corollary 4.15]{C12}. By the maximum principle, solutions necessarily have $\lambda$ and $\Lambda$ positive. As an example where (\ref{pdeu}) is explicitly solvable, we may take 
    \begin{equation*}
        M = N \times S^2, \hspace{.8cm} g = g_N \oplus g_{S^2}.
    \end{equation*}Here $g_{S^2}$ is the round metric on $S^2$ and $(N,g_N)$ is an Einstein manifold satisfying Ric$(g_N) = g_N$. Assuming $u$ is a function on $S^2$, we have $\Lambda = 1$ and $u$ satisfies 
    \begin{equation}\label{constcurv}
        \Delta_{S^2}u = 2 - 2\lambda e^u.
    \end{equation}
    Equation (\ref{constcurv}) is the condition for the conformally rescaled metric $e^ug_{S^2}$ to have constant scalar curvature $2\lambda$. Since any two metrics of the same constant curvature on $S^2$ are isometric, $\lambda e^u$ must be the conformal factor associated to a conformal diffeomorphism of the sphere. Recall that the round metric on $S^2$ can be written in terms of a complex coordinate $z$ (coming from stereographic projection) as
    \begin{equation*}
        g_{S^2} = \frac{4\text{d}z\text{d}\bar{z}}{(1 + \vert z\vert^2)^2}.
    \end{equation*}
    Conformal diffeomorphisms are given by M\"obius transformations $f(z) = \frac{az + b}{cz+d}$ with $ad - bc \neq 0$. This leads to the general solution to (\ref{constcurv}),
    \begin{equation} \label{mobius}
        e^u = \frac{1}{\lambda}\left(\frac{1 + \vert z \vert^2}{1 + \vert f(z) \vert^2}\left\vert \frac{\text{d}f}{\text{d}z}\right\vert\right)^2.
    \end{equation}
    The corresponding quasi-Einstein structures are given by (\ref{qeframe}). \\
    
For $n = 3$ equation (\ref{gaudEW}) can be solved completely whether or not $K$ vanishes \cite{T92}. The solutions include (i) constant curvature spaces, (ii) a solution on $S^2 \times S^1$, (iii) the Berger sphere and (iv) a 4-parameter family containing a 3-parameter subfamily that is globally defined on $S^3$. To solve the QEE it remains to solve (\ref{pdeu}) on these backgrounds. For the Einstein-Weyl structure in case (ii) on $M = S^2 \times S^1$ this can be done explicitly. It is given by
    \begin{equation*} 
        g_0 = g_{S^2} \oplus \text{d}t^2, \hspace{.8cm} K = \frac{\partial}{\partial t},
    \end{equation*}
    where $t$ is a periodic coordinate on $S^1$ and $g_{S^2}$ is the round metric on $S^2$. The function $\Lambda$ is a positive constant, which we have set to $1$ by rescaling $g_0$ and $K$. The function $u$ satisfies $\mathcal{L}_K u = \partial_t u = 0$. Hence, we may view $u$ as a function on $S^2$ solving (\ref{constcurv}), so that $u$ is given by (\ref{mobius}). The quasi-Einstein metric is $g = e^ug_0$, and the 1-form $g$-dual to $X$ is $X^\flat = \text{d}t + \frac 12\text{d}u$. It follows from Theorem~\ref{statnegm} that this is the general solution to the QEE with $n = 3$ and $m = -1$ for which $X^\flat$ is closed but not exact.

\section{Topology of quasi-Einstein 2-manifolds} \label{top}

In this section we consider the topology of compact quasi-Einstein 2-manifolds and, more generally, of compact solutions to the generalised extremal horizon equations (\ref{gehe}) introduced in \cite{KL24}. We present a proof of Theorem \ref{toprig} that uses the Poincaré-Hopf theorem and is based on a careful analysis of the zeros of a vector field $X$ solving the generalised extremal horizon equations.

\begin{theorem}\textup{(Poincaré-Hopf)}
    Let $X$ be a vector field on a closed manifold $M$ having only isolated zeros. Then the Euler characteristic $\chi(M)$ equals the sum of the indices of the zeros of $X$.
\end{theorem}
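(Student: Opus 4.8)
The plan is to prove the theorem in two stages: first to show that the sum of the indices is independent of the vector field $X$ (among fields with isolated zeros), and then to evaluate this invariant for one convenient choice of $X$, recovering $\chi(M)$. Throughout I would use the local description of the index: near an isolated zero $p$, a chart identifying a neighbourhood of $p$ with a ball about the origin in $\R^n$ turns $X$ into a map with $X(p) = 0$, and the index at $p$ is the degree of the normalised Gauss map $u \mapsto X(u)/|X(u)|$ on a small sphere $\partial B_\varepsilon(p) \to S^{n-1}$. The first thing I would check is that this degree is well defined, i.e. independent of $\varepsilon$ and of the chart, which follows from the homotopy invariance of the degree.

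For the independence from $X$, I would embed $M$ in some $\R^N$ and pass to a compact tubular neighbourhood $T$, extending $X$ to a field on $T$ that points outward along $\partial T$ (using the nearest-point retraction to $M$ together with the outward normal direction), so that the zeros of the extension are exactly the zeros of $X$ on $M$. The key input is the lemma (following Milnor) that for a compact domain $T \subset \R^N$ with smooth boundary, any field with isolated zeros pointing outward along $\partial T$ has index sum equal to the degree of the outward unit normal map $\nu : \partial T \to S^{N-1}$. Since the right-hand side depends only on $T$, hence only on $M$, the index sum of $X$ equals this purely topological quantity $\deg(\nu)$. The main obstacle is this lemma: one deletes small balls around the zeros and observes that on the resulting manifold-with-boundary the normalised field extends, so the standard fact that a map $\partial W \to S^{N-1}$ extending over $W$ has degree zero forces the sum of the local degrees on the inner spheres to match $\deg(\nu)$, the care being entirely in the orientation bookkeeping.

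It remains to identify $\deg(\nu)$ with $\chi(M)$, which I would do by computing the index sum for the gradient field $X = \nabla f$ of a Morse function $f : M \to \R$. Its zeros are exactly the critical points of $f$; these are nondegenerate, hence isolated, and at a critical point of Morse index $k$ the linearisation of $\nabla f$ is $g^{-1}\text{Hess}(f)$, whose determinant has sign $(-1)^k$, so the index there is $(-1)^k$. Summing over all critical points and invoking the Morse-theoretic identity $\chi(M) = \sum_k (-1)^k c_k$, where $c_k$ is the number of critical points of index $k$, shows that the index sum equals $\chi(M)$. Combined with the independence established above, this proves the theorem for an arbitrary $X$ with isolated zeros.
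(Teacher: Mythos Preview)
Your sketch is correct and follows the standard proof from Milnor's \emph{Topology from the Differentiable Viewpoint}, which the paper itself cites. However, the paper does not actually prove this theorem: it is stated as the classical Poincar\'e--Hopf theorem and used as a black box, with only the definition of the index recalled afterwards. So there is nothing to compare against---your argument is a faithful outline of the proof the paper implicitly defers to.
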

Recall that the index of $X$ at an isolated zero $p$ may be defined as follows: let $D$ be a closed ball around $p$ (of dimension $n = \text{dim } M$) contained in a coordinate chart such that $p$ is the only zero of $X$ in $D$. Using the chart to view $X$ as a map $D \to \R^n$, the index at $p$ is the topological degree of the map $X/\vert X \vert: \partial D \to S^{n-1}$.

\subsection{Zeros of $X$} 
Our proof is inspired by arguments in \cite{CST18} and uses a result by Milnor to calculate the indices of the zeros of $X$.

\begin{lemma}\textup{(\!\!\cite[Lemma 6.4]{M65})} \label{mindex} Let $p \in M$ be a zero of a vector field $X$ on a manifold $M$. If in local coordinates \textup{det}$(\partial_\mu X^\nu) \neq 0$ at $p$, then the zero is isolated and the index equals the sign of $\textup{det}(\partial_\mu X^\nu)$.
\end{lemma}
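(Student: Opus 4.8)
The plan is to prove Milnor's index lemma, which states that if $\det(\partial_\mu X^\nu) \neq 0$ at a zero $p$ of a vector field $X$, then the zero is isolated and its index equals $\operatorname{sgn}\det(\partial_\mu X^\nu)$. The essential observation is that, working in local coordinates centred at $p$ with $X(p) = 0$, the Jacobian matrix $J = (\partial_\mu X^\nu)|_p$ is precisely the derivative at the origin of the map $X\colon D \to \R^n$, viewed through the chart. By the inverse function theorem, since $\det J \neq 0$, the map $X$ is a local diffeomorphism onto a neighbourhood of $0 \in \R^n$; in particular $X$ is injective near $p$, so $p$ is the only zero in a sufficiently small ball $D$, establishing that the zero is isolated.

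\medskip

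The key computation is then to identify the topological degree of the Gauss map $X/|X|\colon \partial D \to S^{n-1}$. First I would use the fact that the degree is a homotopy invariant. Consider the straight-line homotopy $X_t(q) = (1-t)X(q) + t\,Jq$ interpolating between $X$ and its linearisation $q \mapsto Jq$. Since $X(q) = Jq + o(|q|)$ by differentiability, on a small enough sphere $\partial D$ the interpolant $X_t$ is nowhere zero for all $t \in [0,1]$ (the error term is dominated by the linear term, which is bounded below by a positive multiple of $|q|$ because $J$ is invertible). This furnishes a homotopy of maps $\partial D \to \R^n \setminus \{0\}$, hence $X/|X|$ and $(Jq)/|Jq|$ are homotopic as maps to $S^{n-1}$ and have the same degree.

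\medskip

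It therefore remains to compute the degree of the normalised linear map $q \mapsto Jq/|Jq|$ restricted to a sphere. A linear isomorphism $J$ of $\R^n$ extends to a self-map of $S^{n-1}$ in this way, and its degree is well known to be $+1$ if $\det J > 0$ and $-1$ if $\det J < 0$, i.e. exactly $\operatorname{sgn}\det J$. This can be seen by connecting $J$ within $GL_n(\R)$ to a representative of its connected component: $GL^+_n(\R)$ is path-connected and contains the identity (degree $+1$), while $GL^-_n(\R)$ is path-connected and contains any reflection (degree $-1$), and degree is constant along such a path of invertible matrices since the induced spherical maps stay homotopic. Combining this with the previous paragraph gives $\operatorname{index}_p(X) = \deg(X/|X|) = \operatorname{sgn}\det J$, as required.

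\medskip

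The main obstacle I anticipate is making the homotopy argument in the second paragraph fully rigorous, specifically the nowhere-vanishing claim for $X_t$ on $\partial D$. One must choose the radius of $D$ carefully: invertibility of $J$ gives a constant $c > 0$ with $|Jq| \geq c|q|$, while differentiability gives $|X(q) - Jq| = o(|q|)$, so $|X_t(q)| \geq |Jq| - |X(q) - Jq| \geq (c - o(1))|q| > 0$ once the radius is small enough. The degree-of-a-linear-map computation is standard but its clean formulation via path-connectedness of $GL^\pm_n(\R)$ is what ties the sign of the determinant to the index.
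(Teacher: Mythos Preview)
Your proof is correct and is essentially the standard argument (inverse function theorem for isolatedness, straight-line homotopy to the linearisation, then path-connectedness of $GL^\pm_n(\R)$ to compute the degree of a linear map on the sphere). Note, however, that the paper does not supply its own proof of this lemma: it is quoted verbatim from Milnor's book \cite[Lemma 6.4]{M65} and used as a black box, so there is nothing in the paper to compare your argument against beyond the citation. Your write-up would serve perfectly well as a self-contained proof if one wished to include it.
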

In case the determinant vanishes, further information is needed to compute the index. The following lemma provides criteria that are sufficient to conclude that the index is non-negative.

\begin{lemma} \label{mindex2}
    Let $p \in M$ be an isolated zero of a vector field $X$ on a manifold $M$. If there exists a chart $U \ni p$ such that \textup{det}$(\partial_\mu X^\nu) \geq 0$ on $U$, the index of $X$ at $p$ is non-negative. Moreover, if \textup{det}$(\partial_\mu X^\nu)$ is strictly positive on $U \setminus \{p\}$, the index is positive.
\end{lemma}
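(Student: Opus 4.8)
The plan is to reduce the computation of the index to a signed count of the zeros of a small generic perturbation of $X$, to which Lemma \ref{mindex} applies directly. Working in the chart $U$ I regard $X$ as a smooth map $F \colon U \to \R^n$ with $F(p) = 0$. Since $p$ is an isolated zero I may fix a closed ball $D \subset U$ centred at $p$ with $F \neq 0$ on $D \setminus \{p\}$; in particular $\delta := \min_{\partial D}\vert F\vert > 0$. By the definition recalled above, the index of $X$ at $p$ is the degree of $F/\vert F\vert \colon \partial D \to S^{n-1}$.

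First I would invoke Sard's theorem to choose a regular value $y$ of $F$ with $\vert y\vert < \delta$, and consider the perturbed map $F_y = F - y$. The straight-line homotopy $t \mapsto F - ty$ avoids $0$ on $\partial D$ for $t \in [0,1]$, since $\vert F\vert \geq \delta > \vert y \vert$ there, so $F_y/\vert F_y\vert$ and $F/\vert F\vert$ are homotopic on $\partial D$ and have the same degree. The zeros of $F_y$ in $D$ are exactly the points of $F^{-1}(y)$, a finite set lying in the interior of $D$ (finite because $y$ is a regular value, interior because $\vert y\vert < \delta$), and each is nondegenerate with $\det(\partial_\mu F_y^\nu) = \det(\partial_\mu X^\nu)$. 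By the additivity of the degree over these interior zeros together with Lemma \ref{mindex}, I obtain
\begin{equation*}
    \mathrm{ind}_p(X) = \sum_{x \in F^{-1}(y)} \mathrm{sign}\,\det\bigl(\partial_\mu X^\nu(x)\bigr).
\end{equation*}
The first assertion is then immediate: at a regular preimage $x$ one has $\det(\partial_\mu X^\nu(x)) \neq 0$, and the hypothesis $\det(\partial_\mu X^\nu) \geq 0$ on $U$ forces this determinant to be strictly positive, so every term equals $+1$ and the sum is non-negative.

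For the strict statement it remains to show the sum is at least $1$, i.e.\ that one can choose the regular value $y$ with $F^{-1}(y) \neq \emptyset$. Under the stronger hypothesis $\det(\partial_\mu X^\nu) > 0$ on $U \setminus \{p\}$, the restriction of $F$ to $\mathrm{int}(D) \setminus \{p\}$ is a local diffeomorphism, hence an open map, so $V := F(\mathrm{int}(D)\setminus\{p\})$ is open in $\R^n$. Continuity of $F$ at $p$ gives $0 \in \overline{V}$, so $V$ contains points arbitrarily close to $0$; moreover each $v \in V$ is nonzero (as $p$ is the only zero of $F$) and is a regular value, since the only critical point of $F$ in $\mathrm{int}(D)$ is $p$, which is not a preimage of $v$. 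Choosing such a $v$ with $\vert v\vert < \delta$ and running the argument above with $y = v$ produces a nonempty preimage, whence $\mathrm{ind}_p(X) \geq 1$. The point requiring the most care is precisely this last step: upgrading non-negativity to strict positivity, which is where the open-mapping property of the orientation-preserving local diffeomorphism $F|_{\mathrm{int}(D)\setminus\{p\}}$ is essential, rather than any refinement of the degree formula itself.
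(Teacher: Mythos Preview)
Your proof is correct and follows essentially the same perturbation-by-a-regular-value strategy as the paper's: both pick a Sard regular value $y$ close to $0$, identify the index with the signed count of preimages via Lemma~\ref{mindex}, and for the strict statement choose $y$ in the image of $F$ so that this count is at least one. The only cosmetic differences are that the paper localises the perturbation with a bump function (as in Milnor's argument) whereas you use a constant shift and a straight-line homotopy on $\partial D$, and you make the open-mapping argument for the strict case explicit where the paper simply asserts that $y$ can be taken in the image.
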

\begin{proof} The idea is to deform $X$ slightly as in the proof of the Poincaré-Hopf theorem with degenerate zeros \cite{M65} to obtain a non-degenerate vector field $\widetilde{X}$ to which we can apply Lemma \ref{mindex}. By shrinking $U$, we can assume that $p$ is the only zero of $X$ in $U$. Set $n = \text{dim } M$ and view $X$ as a map $U \to \R^n$ using the induced trivialisation of $TM$. Let $p \in V \subset\subset W \subset\subset U$ be compactly contained open balls around $p$ and suppose $y \in \R^n$ is a regular value of $X$. We define
\begin{equation}
    \widetilde{X} = X - y\chi,
\end{equation}
where $\chi$ is a bump function that is equal to $1$ in $V$ and vanishes outside $W$. Since regular values are dense by Sard's theorem, we can choose $y$ close enough to $0$ such that $\widetilde{X}$ does not vanish in $W \setminus V$. The index of $X$ at $p$ is equal to the sum of the indices of the zeros of $\widetilde{X}$ within $V$. Indeed, both can be evaluated as the degree of a map $\partial W \to S^{n-1}$, where $X$ and $\widetilde{X}$ agree. Since $\chi$ is constant in $V$, $y$ is a regular value and det$(\partial_\mu X^\nu) \geq 0$, we must have det$(\partial_\mu \tilde{X}^\nu) > 0$ at any zero of $\widetilde{X}$ in $V$. Hence using Lemma \ref{mindex} we find that the index of $X$ at $p$ equals the number of pre-images of $y$ in $V$, which is non-negative. The final claim follows from the fact that if \textup{det}$(\partial_\mu X^\nu)$ is strictly positive on $U \setminus \{p\}$, we can choose $y$ to be in the image in $X$, so that there is at least one pre-image.
\end{proof}

\subsection{Two-dimensional case}\label{2dim} We will make use of some equations obtained by applying the prolongation procedure to the two-dimensional generalised extremal horizon equations. For the quasi-Einstein case, these equations were derived in \cite{CDKL24, NR16} (see also Appendix \ref{Aa}). Let us introduce a function $\Omega$ such that d$X^\flat = \Omega \epsilon$, where $\epsilon$ is a (local) volume form of $g$. The prolonged generalised extremal horizon equations read
\begin{equation} \label{prolong}
    2 \nabla X^\flat =- cX^\flat \otimes X^\flat  -\Lambda g + \Omega\epsilon.
\end{equation}
We will need the following differential consequences of (\ref{prolong}), where $R$ denotes the scalar curvature of $g$ and $\star$ is the Hodge star operator. The proof can be found in Appendix \ref{Aa}.
\begin{lemma} \label{prolonglem}
    Any solution to the generalised extremal horizon equations on a 2-manifold satisfies 
\begin{align} \label{domega}
    \textup{d}\Omega &= -\frac 32 c\Omega X^\flat + (\tfrac 12 c\Lambda - R)\star\! X^\flat +\star \textup{d} \Lambda, \\
    \Delta \Lambda &= -\frac 32 c\Omega^2 - \langle X, \textup{d}(2c\Lambda - R)\rangle + (\tfrac 12 c\Lambda - R)(\Lambda - c\vert X \vert^2). \label{laplam}
\end{align}
\end{lemma}
\noindent We are now in a position to present the proof of Theorem \ref{toprig}(i).
\begin{proof}[Proof of Theorem \ref{toprig}(i)]
We first observe that $X$ must have at least one zero. This is well-known for the quasi-Einstein and near-horizon Einstein-Maxwell equations \cite{J09,BGKW23, CKL24} and follows in the same way for the generalised extremal horizon equations (see Section \ref{sechd} for the proof). Hence, by the Poincar\'e-Hopf theorem it suffices to prove that $X$ has only isolated zeros of positive index.  To show any zero must be isolated, introduce a local complex coordinate $z$ (with respect to the complex structure defined by $g$) such that 
\begin{equation} \label{coords}
    g = 2e^H\text{d}z\text{d}\bar{z} \hspace{.8cm}\text{and}\hspace{.8cm} X^\flat = P\text{d}z + \bar{P}\text{d}\bar{z}
\end{equation}
for some real function $H$ and complex function $P$. The $(\bar{z}\bar{z})$ component of the generalised extremal horizon equations reads
\begin{equation} \label{zbzbcomp}
    2\partial_{\bar{z}}\bar{P} - 2\bar{P}\partial_{\bar{z}}H + c\bar{P}^2 = 0.
\end{equation}
There locally exists a complex function $F$ such that $\partial_{\bar{z}}F = \bar{P}$. Using this we can write (\ref{zbzbcomp}) as
\begin{equation} \label{holo}
    \partial_{\bar{z}}\left(e^{\frac 12 cF}e^{-H}\bar{P}\right) = 0.
\end{equation}
It follows that the function $e^{\frac 12cF-H}\bar{P}$ is holomorphic. In particular, every zero of $\bar{P}$ (and hence of $X$) must be isolated and not all derivatives of the components of $X$ vanish at a zero. 

It remains to show the index at any zero $p$ is positive. In any coordinates $(x^1,x^2)$ around a zero $p = (0,0)$ of $X$ we have from (\ref{prolong})
\begin{equation*}
    \text{det}(\partial_\mu X^\nu)\big\vert_p = \frac 14\left(\Lambda(p)^2 + \Omega(p)^2\right). 
\end{equation*}
Using Lemma \ref{mindex}, we find that the zero at $p$ is of index $1$ unless $X, \Lambda$ and $\Omega$ vanish simultaneously. In the degenerate case where $\Lambda(p) = \Omega(p) = 0$, we show that \textup{det}$(\partial_\mu X^\nu)$ has a strict minimum at $p$. Recall that (\ref{holo}) implies there exists a smallest integer $k$ such that not all $(k+1)$-th order partial derivatives of components of $X$ vanish at $p$. From (\ref{prolong}) we find $k \geq 1$ and
\begin{equation*}
\text{det}(\partial_\mu X^\nu) = \frac{1}{4(k!)^2}\left[\left(\Lambda_{,\iota_1\dots\iota_k}(p)x^{\iota_1}\dots x^{\iota_k}\right)^2 + \left(\Omega_{,\iota_1\dots\iota_k}(p)x^{\iota_1}\dots x^{\iota_k}\right)^2\right] + O(\vert x \vert^{2k+1}).
\end{equation*}
Here $\Lambda_{,\iota_1\dots\iota_k}$ denotes a $k$-th order partial derivative of $\Lambda$. To evaluate these derivatives we use the equations in Lemma~\ref{prolonglem}. Evaluating (\ref{domega}) and (\ref{laplam}) to leading order at $p$ yields
\begin{equation} \label{constr}
    \Omega_{,\iota_1\dots\iota_k} = \tensor{\epsilon}{_{\iota_1}^\rho} \Lambda_{,\rho\iota_2\dots\iota_k} \hspace{.4cm}\text{ and (if $k \geq 2$) } \hspace{.4cm}g^{\mu\nu}\Lambda_{,
    \mu\nu\iota_1\dots\iota_{k-2}} = 0\hspace{.2cm}\text{ at $p$.}
\end{equation}
These constraints imply 
\begin{equation} \label{C}
    C = \min_{v \in S^1}\left\{\left(\Lambda_{,\iota_1\dots\iota_k}(p)v^{\iota_1}\dots v^{\iota_k}\right)^2 + \left(\Omega_{,\iota_1\dots\iota_k}(p)v^{\iota_1}\dots v^{\iota_k}\right)^2\right\} > 0.
\end{equation}
Indeed, if there exists $v \in S^1$ such that both terms in the argument of (\ref{C}) are zero, then together with (\ref{constr}) we find that $\Lambda$ and $\Omega$ vanish to order $k$. But then from (\ref{prolong}) $X$ vanishes to order $k+1$, contradicting the definition of $k$. Hence
\begin{equation*}
    \text{det}(\partial_\mu X^\nu) \geq 
            \frac{1}{4(k!)^2}C\vert x \vert^{2k} + O(\vert x\vert^{2k+1})
\end{equation*}
has a strict minimum at $p$. By Lemma \ref{mindex2}, this shows the index at $p$ is also positive in the degenerate case and hence completes the proof. \end{proof}

If $M$ is orientable, since $\chi(M) > 0$ it follows that $M$ is diffeomorphic to $S^2$. Using the complex structure on $M$ induced by $g$ and the fact that any $(0,1)$-form on $S^2$ is $\bar{\partial}$-exact, we can then define the function $F$ introduced in equation (\ref{holo}) globally by $\bar{\partial} F = (X^\flat)^{(0,1)}$. The computation (\ref{holo}) now shows that $e^{\frac 12cF}X^{(1,0)}$ is a globally defined holomorphic vector field. In \cite{KL24} the existence of a global holomorphic vector field is shown without a priori assuming that $M$ is a 2-sphere, which leads to an alternative proof of Theorem \ref{toprig}(i).
    
Since $\chi(S^2) = 2$ the proof of Theorem \ref{toprig}(i) also shows that $X$ is either identically zero or vanishes at two points, which coincide in the degenerate case.
\begin{cor}
For any solution $(g,X)$ to the generalised extremal horizon equations on $S^2$ the vector field $X$ is either identically zero or vanishes at at most two points.
\end{cor}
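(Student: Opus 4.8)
The plan is to read off the conclusion directly from the proof of Theorem \ref{toprig}(i), feeding its output into the Poincaré-Hopf theorem. That proof already establishes the two structural facts I need for a non-trivial solution $X$ on $S^2$: first, every zero of $X$ is isolated, which follows from the holomorphicity of $e^{\frac 12 cF - H}\bar P$ in (\ref{holo}); and second, every such zero has \emph{positive} index. In the non-degenerate case, where $\Lambda$ and $\Omega$ do not both vanish at the zero, the index is computed to be exactly $1$ via Lemma \ref{mindex}, whereas in the degenerate case Lemma \ref{mindex2} only guarantees that the index is a positive integer.

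The second step is the arithmetic. By the Poincaré-Hopf theorem the sum of the indices of the zeros of $X$ equals $\chi(S^2) = 2$. Since $X \not\equiv 0$ it has at least one zero, and since each index is an integer $\geq 1$, a fixed total of $2$ can be attained by at most two summands. Hence $X$ has at most two zeros, which is the desired bound.

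The final step is to record how the two possibilities organise themselves, matching the statement that the zeros coincide in the degenerate case. If $X$ has exactly two zeros, then the only decomposition $2 = 1 + 1$ forces both to be non-degenerate of index $1$; if $X$ has a single zero it must absorb the entire Euler characteristic and therefore have index $2$, which is precisely the degenerate case $\Lambda(p) = \Omega(p) = 0$ and may be pictured as two index-$1$ zeros that have merged. I do not expect a genuine obstacle here, since all the hard analytic input (isolation of zeros and positivity of indices, including the strict-minimum argument for $\text{det}(\partial_\mu X^\nu)$ in the degenerate case) is already supplied by Theorem \ref{toprig}(i); the only point needing care is the positivity and integrality of the indices, as this is exactly what prevents the total of $2$ from being spread over three or more zeros.
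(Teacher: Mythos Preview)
Your proposal is correct and takes essentially the same approach as the paper: the corollary is stated immediately after the proof of Theorem \ref{toprig}(i) and is justified there in one line by exactly the argument you give, namely that the positive integer indices from that proof must sum to $\chi(S^2)=2$, forcing at most two zeros. Your additional remark that a single zero must be degenerate (since non-degenerate zeros have index exactly $1$) also matches the paper's comment that the two zeros ``coincide in the degenerate case.''
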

We note that it is possible for $X$ to have only one zero. To see this, fix any metric $g$ on $S^2$ and let $V$ be a holomorphic vector field with exactly one zero (e.g. $V = z^2\partial_z$ with $z$ a complex coordinate compatible with $g$ coming from stereographic projection). Let $F$ be a globally defined nowhere zero complex-valued function satisfying $\bar{\partial}F = V^\flat$. We define the real vector field $X$ by $(X^\flat)^{(0,1)} = 2(cF)^{-1}V^\flat$. It is now straightforward to verify that the ``trace-free" part of the generalised extremal horizon equations (equation (\ref{holo}) and its complex conjugate) is satisfied, and we can always fix $\Lambda$ such that the remaining trace component holds.

\subsection{Higher dimensions} \label{sechd}
In dimension $n > 2$ the QEE are no longer a special case of the generalised extremal horizon equations. The equations may however still be of interest: for example, it follows from (\ref{ewg}) that any Einstein-Weyl structure $(M,[g],D)$ for which $(M,[g])$ is conformal to Einstein defines a solution to the generalised extremal horizon equations. The arguments in Section \ref{2dim} can be generalised to deduce Theorem \ref{toprig}(ii).
\begin{proof}[Proof of Theorem \ref{toprig}(ii)]
Contracting the generalised extremal horizon equations twice with $X$ yields
\begin{equation*}
    \langle X, \text{d}(\vert X \vert^2)\rangle + c\vert X \vert^4 + \Lambda\vert X \vert^2 = 0.
\end{equation*}
We can eliminate $\Lambda = -\frac 2n\text{div }X - \frac cn\vert X \vert^2$ using the trace of the generalised extremal horizon equations. The resulting equation reads
\begin{equation*}
    \langle X, \text{d}(\vert X \vert^2)\rangle + \frac{c(n-1)}{n}\vert X \vert^4 - \frac 2n\vert X \vert^2\:\text{div }X = 0.
\end{equation*}
This can be written as
    \begin{equation*}
        \text{div}\left(\frac{X}{\vert X \vert^ n}\right) = \frac 12c(n-1)\vert X \vert^{2-n}.
    \end{equation*}
    Since the RHS does not change sign, on compact $M$ this shows that $X$ must have zero. To evaluate the index at a zero, we again consider the prolonged generalised extremal horizon equations
    \begin{equation*} 
        2\nabla X^\flat + cX^\flat \otimes X^\flat  = -\Lambda g + \Omega,
    \end{equation*}
    where $\Omega = \text{d}X^\flat$. Since $\Omega$ is antisymmetric the determinant of the RHS is positive if $\Lambda \neq 0$ and $n$ is even, so any zero is isolated and of index 1 by Lemma \ref{mindex}. As before, we deduce from the Poincaré-Hopf theorem that $\chi(M) > 0$. For odd $n$ the sign of the determinant equals the sign of~$\Lambda$. As closed odd-dimensional manifolds have Euler characteristic equal to zero, we conclude that~$\Lambda$ must change sign and hence vanish somewhere.
    \end{proof}

\section{Hitchin's equations and projective metrizability} \label{hitchin}
For $m = 1-n$ and $\lambda = 0$ the $n$-dimensional quasi-Einstein equations describe projective structures that are both skew and metrizable (see \cite{CDKL24,R14} for details). Concretely, suppose $\nabla$ is the Levi-Civita connection of some metric $g$ and let $\Upsilon \in \Omega^1(M)$. Then the quasi-Einstein equations with $X^\flat = (1-n)\Upsilon$ are exactly the condition for the projectively equivalent connection 
\begin{equation}
    \hat{\nabla} = \nabla +  \Upsilon \otimes \text{Id} + \text{Id}\otimes \Upsilon
\end{equation}
to have a Ricci tensor that is anti--symmetric. Moreover, for $m = 1-n,\: \lambda = 0, \: p = -\frac 12, \: q = \frac{1}{n-1}$ the quasi-Einstein equations are equivalent to the affine connection\footnote{Explicitly, for $Y,Z \in \Gamma(TM)$ we have $$ D_Y Z = \nabla_Y Z - p X^\flat(Y)Z - qX^\flat(Z)Y. $$}
\begin{equation} \label{D}
    D = \nabla - p \: X^\flat \otimes \text{Id} - q\: \text{Id}\otimes X^\flat 
\end{equation}
having vanishing Ricci curvature (note that $D$ does have torsion).

\subsection{Lax pair for  $(m=-1,\lambda=0)$ quasi--Einstein equations} From now on we set $n = 2$ and consider the quasi-Einstein equations on a 2-manifold with $m = -1, \lambda = 0$. In terms of a local complex coordinate $z$ as in (\ref{coords}), the QEE can be written as the system
\begin{subequations} \label{qeez}
\begin{align} 
    \partial_z P - P\partial_z H + P^2 &= 0, \label{zz} \\
    \partial_z\partial_{\bar{z}}H -\frac 12(\partial_z\bar{P} + \partial_{\bar{z}}P) - \vert P \vert^2 &= 0,\label{zzb} \\
    \partial_{\bar{z}}\bar{P}-\bar{P}\partial_{\bar{z}}H + \bar{P}^2 &= 0. \label{zbzb}
\end{align}
\end{subequations}
In these coordinates the connection (\ref{D}) is of the form $D = \text{d} + U$, where (using notation as in footnote \ref{fn1})
\begin{equation*}
    U = \begin{bmatrix}
        \partial_z H -\frac 12 P & - \bar{P} \\ 0 & \frac 12 P 
    \end{bmatrix}\text{d}z + \begin{bmatrix} \frac 12\bar{P} & 0 \\ -P & \partial_{\bar{z}} H - \frac 12\bar{P}\end{bmatrix}\text{d}\bar{z}.
\end{equation*}
We perform a gauge transformation  $U \to \gamma U \gamma^{-1}-\text{d}\gamma \cdot \gamma^{-1} $ with $\gamma = e^{\frac H2}\text{Id}$, which changes the trivialisation of $TM$ to one in which $g$ becomes the standard inner product. The resulting connection 1-form $V$ is traceless and given by
\begin{equation} \label{Vflat}
    V = \begin{bmatrix}
        \frac 12(\partial_z H - P) & - \bar{P} \\ 0 & \frac 12(P -\partial_z H)
    \end{bmatrix}\text{d}z + \begin{bmatrix} \frac 12(\bar{P} - \partial_{\bar{z}}H) & 0 \\ -P & \frac 12(\partial_{\bar{z}} H - \bar{P})\end{bmatrix}\text{d}\bar{z}.
\end{equation}
Note that the QEE hold if and only if $D$ is flat, i.e. if and only if the operators $M = \partial_z + V_z$ and $L = \partial_{\bar{z}} + V_{\bar{z}}$ commute. Moreover, the transformation
\begin{equation*}
    z \mapsto \epsilon z, \hspace{.2cm} \bar{z}\mapsto \epsilon^{-1}\bar{z},\hspace{.2cm} P \mapsto \epsilon^{-1}P, \hspace{.2cm} \bar{P}\mapsto \epsilon \bar{P}, \hspace{.2cm} H \mapsto H
\end{equation*}
preserves $(g,X)$ and hence also the QEE. Setting $\xi = -\epsilon^{-2}$, these observations allow us to introduce a Lax pair with spectral parameter $\xi$.
\begin{prop} 
\label{propasd}
The $(m=-1, \lambda=0)$ quasi--Einstein equations on a 2-manifold admit a Lax formulation. If $(g, X)$ are given by (\ref{coords}), then the QEE are equivalent to
\[
[L, M]=0, \quad {\forall}\;\xi
\]
where 
\begin{align}
    L &= \partial_{\bar{z}} + \begin{bmatrix}
        \frac 12(\bar{P} - \partial_{\bar{z}}H) & 0 \\ 0 & \frac 12(\partial_{\bar{z}}H - \bar{P})
    \end{bmatrix} + \xi \begin{bmatrix}
        0 & 0 \\ P & 0
    \end{bmatrix}, \label{Lflatcon} \\[5pt]
    M &= \begin{bmatrix}
        0 & \bar{P} \\ 0 & 0
    \end{bmatrix} + \xi \partial_z + \xi \begin{bmatrix}
        \frac 12(\partial_z H - P) & 0 \\ 0 & \frac 12(P - \partial_z H) \nonumber 
    \end{bmatrix}.
\end{align}
Moreover in this case the QEE arise as a symmetry reduction of anti--self--dual Yang--Mills (ASDYM) equations
on $\R^4$ with gauge group $SU(2)$ by two translations.
\end{prop}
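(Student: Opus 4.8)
The plan is to establish the claim in Proposition \ref{propasd} in two stages. First I would verify the Lax formulation $[L,M]=0$ directly by computing the commutator of the two operators and collecting terms order by order in the spectral parameter $\lambda$. Since $L$ and $M$ are first-order matrix differential operators, the commutator $[L,M]$ is a matrix-valued function (the $\partial_z$ and $\partial_{\bar z}$ derivatives cancel at leading order precisely because the principal symbols commute), which can be organized as a polynomial in $\lambda$. The coefficient of each power of $\lambda$ must vanish identically, and I expect these coefficients to reproduce exactly the three equations (\ref{zz}), (\ref{zzb}), (\ref{zbzb}). Concretely, the $\lambda^0$ term should give (\ref{zbzb}) (the $(\bar z\bar z)$ equation), the $\lambda^2$ term should give (\ref{zz}) (its complex conjugate), and the cross term at order $\lambda^1$ should reproduce (\ref{zzb}). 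This is the routine but essential bookkeeping step, and it is really just a reorganization of the flatness of $D$ (equivalently of $V$ in (\ref{Vflat})) with the scaling symmetry used to introduce $\lambda$.

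\textbf{Second}, for the ASDYM reduction I would proceed as follows. The ASDYM equations on $\R^4$ with a metric of signature $(2,2)$ or a complexified metric can be written, in double-null coordinates $(z,\bar z,w,\bar w)$, as the vanishing of the curvature components $F_{zw}=F_{\bar z\bar w}=0$ together with $F_{z\bar z}+F_{w\bar w}=0$ (or the analogous condition for the chosen reality structure). The standard route is to exhibit $L$ and $M$ as the restriction to a reduced gauge potential of the Lax pair for ASDYM, namely operators of the form $\partial_{\bar z}-A_{\bar z}+\lambda(\partial_{\bar w}-A_{\bar w})$ and $\partial_z - A_z + \lambda(\partial_w - A_w)$. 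Imposing invariance under the two translational Killing vectors $\partial_w$ and $\partial_{\bar w}$ means all gauge-potential components depend only on $(z,\bar z)$, and the would-be derivatives $\partial_w,\partial_{\bar w}$ drop out, turning the Higgs-field components $A_w, A_{\bar w}$ into matrix-valued fields on the surface. Matching these with the $\lambda$-coefficients appearing in $L$ and $M$ above identifies the reduced potential explicitly, and the compatibility $[L,M]=0$ for all $\lambda$ is then literally the statement that the reduced ASDYM curvature vanishes.

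\textbf{The main obstacle} I anticipate is the bookkeeping of \emph{which} two translations to quotient by and the precise correspondence between the reduced components and the entries appearing in (\ref{Lflatcon}); in particular one must check that the $\lambda$-independent pieces of $L$ and $M$ (the off-diagonal $\bar P$ and the diagonal derivative-of-$H$ terms) together with the $\lambda$-linear pieces assemble consistently into a single $\lambla$-connection of ASDYM type, rather than requiring an extra gauge rotation. Because the Lax operators here are not symmetric in their dependence on $\lambda$ (one has the bare $\partial_z$ at order $\lambda$, the other at order $\lambda^0$), a constant gauge transformation or a rescaling of the spectral parameter may be needed to bring them into the canonical ASDYM form; I would absorb this into the choice of trivialization already fixed by $\gamma = e^{H/2}\mathrm{Id}$. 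Once the dictionary is fixed, the verification is mechanical, and the SU(2) reality condition follows from the fact that $V$ in (\ref{Vflat}) is traceless and its reality is controlled by the adjoint $\Phi^*$ appearing in (\ref{h11n}), consistent with Theorem \ref{theoremHitchin}.
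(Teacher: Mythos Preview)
Your proposal is correct and follows essentially the same route as the paper. The paper derives the Lax pair slightly more conceptually---by first noting that the flatness of $V$ in (\ref{Vflat}) is equivalent to the QEE and then inserting the scaling symmetry $(z,\bar z,P,\bar P)\mapsto(\epsilon z,\epsilon^{-1}\bar z,\epsilon^{-1}P,\epsilon\bar P)$ with $\lambda=-\epsilon^{-2}$---whereas you propose the equivalent direct computation of $[L,M]$ order by order in $\lambda$; for the ASDYM reduction the paper does exactly what you outline, writing the ASDYM Lax pair $\hat L=\mathcal{D}_{\tilde z}-\lambda\mathcal{D}_w$, $\hat M=-\mathcal{D}_{\tilde w}+\lambda\mathcal{D}_z$, imposing Euclidean reality conditions $\tilde z=\bar z$, $\tilde w=-\bar w$, and reading off the reduced potential (\ref{asdymcon}) invariant under $\partial_w,\partial_{\bar w}$.
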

\begin{proof}
We have already shown the existence of the Lax pair. To establish the rest of the proposition
we follow \cite{MW, D24}.  Consider $\C^4$ equipped with the (complex) metric
\begin{equation*}
    \eta = 2(\text{d}z\text{d}\tilde{z} - \text{d}w\text{d}\tilde{w}).
\end{equation*}
Here $(z,\tilde{z},w,\tilde{w})$ are positively oriented complex coordinates on $\C^4$. Given a connection $\mathcal{A}$ on a vector bundle over $\C^4$, the ASDYM equations are
\begin{equation*}
    \star_{\eta} \mathcal{F} = -\mathcal{F}, \hspace{.4cm} \text{where} \hspace{.4cm} \mathcal{F} = \text{d}\mathcal{A} + \mathcal{A} \wedge \mathcal{A}.
\end{equation*}
These equations admit a Lax pair
\begin{equation} \label{laxasdym}
    \hat{L} = \mathcal{D}_{\tilde{z}} - \xi \mathcal{D}_w, \hspace{.8cm} \hat{M} = -\mathcal{D}_{\tilde{w}} + \xi \mathcal{D}_z,
\end{equation}
where $\mathcal{D} = \text{d} + \mathcal{A}$. To identify (\ref{laxasdym}) with (\ref{Lflatcon}) we choose Euclidean reality conditions $\tilde{z} = \bar{z}, \tilde{w} = -\bar{w}$ and set
\begin{equation} \label{asdymcon}
    \mathcal{A}_z = \begin{bmatrix}
        \frac 12(\partial_z H - P) & 0 \\ 0 & \frac 12(P - \partial_z H) 
    \end{bmatrix}, \hspace{.3cm} \mathcal{A}_{\bar{z}} = - (\mathcal{A}_z)^\dagger, \hspace{.3cm} \mathcal{A}_w = \begin{bmatrix}
        0 & 0 \\ -P & 0
    \end{bmatrix}, \hspace{.3cm} \mathcal{A}_{\bar{w}} = -(\mathcal{A}_w)^\dagger.
\end{equation}
Identifying $z$ with a complex coordinate on $U \subseteq M$, we can view $\mathcal{A}$ as a connection on $\pi^* (TU) \otimes \C$, where $\pi: U \times \R^2 \to U$ is the projection $\pi(z,\bar{z},w,\bar{w}) = (z,\bar{z})$. We now recognise the QEE as a symmetry reduction of the Euclidean ASDYM equations reduced by two translations $\partial_w, \partial_{\bar{w}}$ with gauge group $SU(2)$.
\end{proof}
\subsection{Hitchin's equations} 
\label{sub_hitch}
A symmetry reduction of the Euclidean ASDYM equations reduced by two translations $\partial_w, \partial_{\bar{w}}$  as in Proposition \ref{propasd} leads to Hitchin's (anti-)self-duality equations \cite{H87}. We shall look at this next,
and establish Theorem \ref{theoremHitchin} from the Introduction.
\begin{proof}[Proof of Theorem \ref{theoremHitchin}]
Given a solution to ASDYM, define
\begin{subequations}
\begin{align}
        A &= \mathcal{A}_z\text{d}z + \mathcal{A}_{\bar{z}}\text{d}\bar{z} = \begin{bmatrix}
        \frac 12(\partial_z H - P) & 0 \\ 0 & \frac 12(P - \partial_z H) 
    \end{bmatrix}\text{d}z + \begin{bmatrix}
        \frac 12(\bar{P}-\partial_{\bar{z}}H) & 0 \\ 0 & \frac 12(\partial_{\bar{z}} H - \bar{P}) 
    \end{bmatrix}\text{d}\bar{z}, \label{ac1}\\
    \Phi &= \mathcal{A}_{\bar{w}}\text{d}z = \begin{bmatrix} 0  & \bar{P} \\ 0 & 0 \end{bmatrix}\text{d}z.\label{phic1}
\end{align}
\end{subequations}
If we require our gauge transformations $\gamma$ to only depend on $z,\bar{z}$, the objects $A$ and $\Phi$ transform as 
\begin{equation}
\label{Hitchin_gauge}
    A \mapsto \gamma A \gamma^{-1} - \text{d}\gamma \cdot \gamma^{-1}, \hspace{.8cm} \Phi \mapsto \gamma \Phi \gamma^{-1}.
\end{equation}
Hence we may view $A$ as a $SU(2)$ connection (with respect to the $SU(2)$ structure defined by the hermitian metric $g_\C$ induced by $g$) on the complexified tangent bundle $E = TM \otimes \C$, and $\Phi$ as a (traceless) section of End$(E) \otimes \kappa$. Here $\kappa$ denotes the canonical bundle $\Omega^{1,0}M$. Coordinate-free expressions for the covariant derivative $\mathcal{D}_A$ and $\Phi$ are given by (\ref{t1}) and (\ref{t2}) 
respectively\footnote{A part of $\mathcal{D}_A$ is given by the Levi--Civita connection $\nabla$ of the metric $g$ in the quasi--Einstein structure. This is based on the identification of the holonomy group
$SO(2)$ of $(M, g)$ with $U(1)$, and embedding $U(1)$ in $SU(2)$:
\[
\begin{bmatrix} \cos{\theta}  & \sin{\theta} \\ -\sin{\theta} & \cos{\theta} \end{bmatrix}
\rightarrow e^{i\theta}\rightarrow 
\begin{bmatrix} e^{i\theta}  & 0 \\ 0& e^{-i\theta} \end{bmatrix}
\]
which yields the Lie--algebra map
\[
\begin{bmatrix} 0  & 1 \\ -1 & 0 \end{bmatrix}\rightarrow
\begin{bmatrix} i  & 0 \\ 0 & -i \end{bmatrix}.
\]
This explains the $H$--dependent part of the formula (\ref{ac1}).}. 

In particular, they show $\mathcal{D}_A$ and $\Phi$ are defined globally on $M$. Writing $F ~\in~\Omega^{1,1}(M ; \text{End }E)$ for the curvature of $A$ and $\Phi^*$ for the $g_\C$-adjoint of $\Phi$
\begin{equation*}
    \Phi^* = -\mathcal{A}_w\text{d}\bar{z} = \mathcal{P}^{0,1}\otimes (X^\flat)^{(1,0)} \in \Omega^{0,1}(M;\text{End } E),
\end{equation*}  
the ASDYM equations for $\mathcal{A}$ (and hence the QEE) are equivalent to the Hitchin equations (\ref{h11n}). Explicitly,
\begin{align*}
    F &= \text{d}A = \begin{bmatrix} \frac 12(\partial_z \bar{P} + \partial_{\bar{z}}P) - \partial_z\partial_{\bar{z}}H & 0 \\ 0 & -\frac 12(\partial_z \bar{P} + \partial_{\bar{z}}P) + \partial_z\partial_{\bar{z}}H \end{bmatrix}\text{d}z \wedge \text{d}\bar{z},\\
    [\Phi,\Phi^*] &= \begin{bmatrix} \vert P \vert^2 & 0 \\ 0 & -\vert P \vert^2 \end{bmatrix}\text{d}z\wedge \text{d}\bar{z}, \\
    \overline{\mathcal{D}}_A\Phi &= \overline{\partial}\Phi + [A^{(0,1)}, \Phi] = \begin{bmatrix} 0 & -\partial_{\bar{z}}\bar{P} + \bar{P}\partial_{\bar{z}}H - \bar{P}^2 \\ 0 & 0 \end{bmatrix}\text{d}z\wedge \text{d}\bar{z},
\end{align*}
from which we see that the equations (\ref{h11n}) (and their complex conjugate) are precisely (\ref{qeez}).
\end{proof} 
The second equation in (\ref{h11n}) is the statement that $\Phi$ is holomorphic with respect to the holomorphic structure on End$(E) \otimes \kappa$ induced by $A$ and the complex structure on $M$. Note that $A + \Phi + \Phi^*$ equals (the complex-linear extension of) the flat connection (\ref{D}). Indeed, flatness of this connection is a consequence of (\ref{h11n}).

To characterise the solution to the Hitchin equations corresponding to the QEE, recall that $A$ induces a holomorphic structure on $E$ by declaring trivialisations in which $A^{(0,1)}$ vanishes to be holomorphic. With respect to this structure $E$ splits holomorphically as $E = L \oplus L^{-1}$, where $L$ equals $T^{1,0}M$ as a smooth complex line bundle (but not necessarily as a holomorphic line bundle). This follows from the fact that, under the (smooth) isomorphism $T^{0,1}M \cong (T^*M)^{1,0}$ induced by $g$, $A$ is the direct sum of a $U(1)$ connection on $T^{1,0} M$ and its dual connection on $T^{0,1}M$. Note that the restriction 
\begin{equation*}
    \Phi\vert_{L^{-1}}: L^{-1}\otimes \kappa \to L \otimes \kappa 
\end{equation*}
is well-defined and hence holomorphic as a section of Hom$(L^{-1},L) \otimes \kappa$, which is canonically isomorphic to $L^2 \otimes \kappa$. This observation, together with Hitchin's vanishing theorem \cite[Theorem 2.1]{H87}, gives an alternative proof of the following known result (see \cite[Theorem 1.2]{CDKL24} or Theorem \ref{negmthm}).
\begin{prop} \label{l1}
    Let $(g,X)$ solve the QEE with $m = -1, \lambda = 0$ on an oriented compact 2-manifold $M$. Then $X \equiv 0$ and $(M,g)$ is the flat torus.
\end{prop}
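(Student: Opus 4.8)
The plan is to assume $X \not\equiv 0$ and derive a contradiction purely from the Higgs-bundle data furnished by Theorem~\ref{theoremHitchin}, and then to read off the geometry once $X \equiv 0$ has been forced. First I would record the two relevant degrees. Since $L$ coincides with $T^{1,0}M$ as a smooth complex line bundle we have $\deg L = \chi(M)$, and since $\kappa = (T^{1,0}M)^{*}$ we have $\deg\kappa = -\chi(M)$; hence $\deg\left(L^2\otimes\kappa\right) = 2\chi(M) - \chi(M) = \chi(M)$. The whole argument then turns on squeezing $\chi(M)$ between two incompatible bounds whenever $\Phi$ (equivalently $X$) is nonzero, using that $\Phi$ is strictly upper triangular for the holomorphic splitting $E = L\oplus L^{-1}$, so that $\Phi(L)=0$ while $\Phi|_{L^{-1}}$ is the only nonzero block.

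For the lower bound I would use the observation preceding the statement: if $X\not\equiv 0$ then $\Phi|_{L^{-1}}$ is a \emph{nonzero} holomorphic section of $L^2\otimes\kappa$, and a nonzero holomorphic section of a line bundle over a compact Riemann surface has non-negative degree, giving $\chi(M)=\deg(L^2\otimes\kappa)\geq 0$. For the upper bound I would invoke Hitchin's vanishing theorem \cite[Theorem 2.1]{H87}: the subbundle $L$ is $\Phi$-invariant inside the degree-zero bundle $E$, so $\deg L \leq 0$, and this inequality is \emph{strict} as soon as $\Phi\not\equiv 0$. The strictness is the crucial point, and I would verify it directly by restricting the first Hitchin equation $F=-[\Phi,\Phi^{*}]$ to $L$: since $\Phi|_L=0$, the $L$-component reduces to $F_L = -\,\Phi\,\Phi^{*}|_L$, a two-form equal to a non-positive multiple of the area form that is strictly negative wherever $\Phi\neq 0$; integrating over $M$ then yields $\chi(M)=\deg L<0$. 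Combining the two bounds, $\chi(M)\geq 0$ and $\chi(M)<0$ are incompatible, so $\Phi\equiv 0$, i.e.\ $X\equiv 0$.

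Finally, with $X\equiv 0$ the quasi-Einstein equations \eqref{QEE} collapse to $\mathrm{Ric}(g)=\lambda g=0$; in two dimensions this says $g$ is flat, and Gauss--Bonnet gives $\chi(M)=0$, so the compact oriented surface $(M,g)$ is a flat torus. I expect the main obstacle to be the careful bookkeeping in the upper bound: identifying which summand of $E=L\oplus L^{-1}$ is $\Phi$-invariant and confirming, with consistent sign and factor-of-$i$ conventions, that a nonzero Higgs field forces the curvature of that summand to be negative (so that $\deg L<0$ rather than merely $\leq 0$). Everything else — the degree computations and the final flatness step — is routine once this strict inequality is in place.
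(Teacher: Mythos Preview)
Your proof is correct and follows essentially the same route as the paper's: both squeeze $\deg L=\chi(M)$ between the lower bound coming from the nonzero holomorphic section $\Phi|_{L^{-1}}\in\Gamma(L^{2}\otimes\kappa)$ and the upper bound coming from Hitchin's vanishing theorem applied to the $\Phi$-invariant subbundle $L\subset E$. The only difference is packaging: you argue by contradiction via the strict inequality $\deg L<0$ (verified directly from $F_{L}=-\Phi\Phi^{*}|_{L}$), whereas the paper uses the non-strict bound $\deg L\le 0$ to pin down $\deg L=0$ and then invokes the equality case of Hitchin's theorem to force $F=[\Phi,\Phi^{*}]=0$, hence $X\equiv 0$ and $g$ flat.
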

\begin{proof}
    Note that\footnote{Here deg refers to the first Chern number of a smooth complex line bundle. For a compact Riemann surface of genus $\mathfrak{g}$ we have deg$(L) = -\text{deg}(\kappa) = 2-2\mathfrak{g}$.} deg$(L) = - \text{deg}(\kappa)$, so deg$(L^2 \otimes \kappa) = \text{deg}(L)$. Since the bundle $L^2 \otimes \kappa$ admits a holomorphic section $\Phi\vert_{L^{-1}}$, we must have deg$(L) \geq 0$.  On the other hand, the subbundle $L \subseteq E$ is $\Phi$-invariant, so Hitchin's vanishing theorem \cite{H87} implies deg$(L) \leq \frac 12\text{deg}(\wedge^2 E) = 0$. We find deg$(L) = 0$, i.e. $M$ is a torus. Moreover, the theorem in \cite{H87} tells us that $[\Phi,\Phi^*] = F = 0$, which translates into $X \equiv 0$ and $g$ being flat.
\end{proof}
As the equations are of interest only locally, from now on we restrict to an open subset of $M$ on which $X$ does not vanish. In this case $\Phi\vert_{L^{-1}}$ is nowhere zero and trivialises $L^2 \otimes \kappa$, so that we may view $L^{-1}$ as a square root of $\kappa$. Under this identification, we have
\begin{equation} \label{phiabstract}
    E = \kappa^{-\frac 12} \oplus \kappa^{\frac 12}, \hspace{.8cm} \Phi = \begin{bmatrix} 0 & \bf{id}\\ 0 & 0 \end{bmatrix}.
\end{equation}
Here $\bf{id}$ denotes the canonical section of Hom$(\kappa^{\frac 12}, \kappa^{-\frac 12}) \otimes \kappa \cong \kappa^{-1} \otimes \kappa$. We next show that $A$ corresponds to the connection on $\kappa^{-\frac 12} \oplus \kappa^{\frac 12}$ induced by the Chern connection $\nabla_h$ of the conformally rescaled metric~$h = \vert X \vert^2_g\:g$. We then recognise (\ref{phiabstract}) as the canonical solution to the $SU(2)$ Hitchin equations discussed in \cite[Example 1.5]{H87}.

\begin{prop} \label{hcc}
    Under the isomorphism provided by $\Phi\vert_{L^{-1}}$, the solution to the Hitchin equations in Theorem \ref{theoremHitchin} reduces to the canonical solution on $\kappa^{-\frac 12} \oplus \kappa^{\frac 12}$ defined (up to a homothety) by the metric $h = \vert X \vert^2\:g$ as in \textup{\cite{H87}}. In particular, $h$ has constant scalar curvature $-2$ whenever the QEE are satisfied.
\end{prop}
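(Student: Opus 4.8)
The plan is to work on the open set where $X \neq 0$ and to match the Hitchin data $(A,\Phi)$ with Hitchin's canonical (Fuchsian) solution on $\kappa^{-\frac12}\oplus\kappa^{\frac12}$. In the coordinate $z$ of (\ref{coords}) one has $\vert X \vert^2 = 2e^{-H}\vert P \vert^2$, so the rescaled metric is
\begin{equation*}
    h = \vert X \vert^2 g = 4\vert P \vert^2\,\text{d}z\text{d}\bar z = 2e^{\rho}\,\text{d}z\text{d}\bar z, \qquad e^{\rho} = 2\vert P \vert^2 .
\end{equation*}
The normalisation (\ref{phiabstract}) already records that, under the holomorphic isomorphism $\Phi\vert_{L^{-1}}$, the Higgs field becomes the tautological section $\begin{bmatrix}0 & \mathbf{id}\\0&0\end{bmatrix}$ of the canonical solution. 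Two things then remain: (a) to check that the connection $A$ is carried to the Chern connection of $h$, so that $(A,\Phi)$ is indeed the canonical solution attached to $h$; and (b) to compute the curvature of $h$.

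For (a) I would use that $A$ preserves the smooth splitting $E = L \oplus L^{-1}$ and restricts on $L = T^{1,0}M$ to the unitary connection $\alpha = \tfrac12(\partial_z H - P)\text{d}z - \tfrac12(\partial_{\bar z}H - \bar P)\text{d}\bar z$ read off from (\ref{ac1}). The $X$-dependent correction here is anti-Hermitian, so $\alpha$ is unitary for the metric induced by $g$ and, being compatible with the holomorphic structure $\bar\partial_A$, is the Chern connection of $(L,g)$ for that structure. The isomorphism $\Phi\vert_{L^{-1}}$ is holomorphic but \emph{not} isometric: its pointwise norm is a multiple of $\vert X \vert$, so transporting the $g$-metric on $L$ to $\kappa^{-\frac12}$ through it rescales the fibre metric by $\vert X \vert^2$ and produces precisely the $h$-metric. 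Hence $A$ becomes the Chern connection of $h$ on $\kappa^{-\frac12}\oplus\kappa^{\frac12}$ and $(A,\Phi)$ is the canonical solution for $h$, determined up to the homothety rescaling $\Phi$ by a constant. I expect this metric bookkeeping — keeping the smooth identification $L = T^{1,0}M = \kappa^{-1}$ distinct from the holomorphic identification $L \cong \kappa^{-\frac12}$, and verifying that the rescaling factor is exactly $\vert X \vert^2$ — to be the main obstacle; everything else is formal.

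For (b) there are two routes. One may invoke Hitchin's characterisation of the canonical solution: it satisfies (\ref{h11n}) if and only if $h$ has constant curvature, and since Theorem \ref{theoremHitchin} guarantees that (\ref{h11n}) holds, $h$ must be of constant curvature, with the scale fixed by the equations. More directly, I would compute the curvature of $h$ from the QEE: equations (\ref{zz}) and (\ref{zbzb}) give $\partial_z\log P = \partial_z H - P$ and $\partial_{\bar z}\log\bar P = \partial_{\bar z}H - \bar P$, whence
\begin{equation*}
    \partial_z\partial_{\bar z}\rho = 2\partial_z\partial_{\bar z}H - (\partial_{\bar z}P + \partial_z\bar P) = 2\vert P \vert^2 ,
\end{equation*}
the last step using (\ref{zzb}). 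Since the Gaussian curvature of $h = 2e^{\rho}\text{d}z\text{d}\bar z$ is $K_h = -e^{-\rho}\partial_z\partial_{\bar z}\rho$ and $e^{\rho} = 2\vert P \vert^2$, we obtain $K_h = -1$, i.e. scalar curvature $R_h = -2$, as claimed.
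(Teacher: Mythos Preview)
Your proposal is correct and follows the same overall strategy as the paper, though the execution differs in emphasis. For part (a), the paper works entirely in coordinates: it writes down the Chern connection $A_h$ of $h$ in the unitary frame $\bar P\,\mathrm{d}z$ for $\kappa$, observes that the isomorphism $L\cong\kappa^{-1/2}$ becomes the identity map between the frames used in (\ref{ac1}) and (\ref{ah}), and then checks $A_h=A$ by invoking (\ref{zz}) and (\ref{zbzb}) to rewrite $P^{-1}\partial_zP$ as $\partial_zH-P$. Your conceptual argument---$A|_L$ is the Chern connection of $(L,g,\bar\partial_A)$, and the holomorphic isomorphism $\Phi|_{L^{-1}}$ has $g$-norm proportional to $\vert X\vert$, so the pushed-forward metric is a constant multiple of the $h$-metric on $\kappa^{-1/2}$---reaches the same conclusion more invariantly and in fact makes clear that this identification does not itself require the QEE (the QEE enter only through the curvature condition). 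The bookkeeping you flag is genuine but routine: it amounts to checking that $\vert\Phi|_{L^{-1}}\vert_g^2 = \vert P\vert^2\vert\mathrm{d}z\vert_g^2$ is a constant multiple of $\vert X\vert^2=2e^{-H}\vert P\vert^2$, which is immediate. For part (b), your direct curvature computation from (\ref{zz})--(\ref{zbzb}) is a nice addition; the paper instead observes that once $(A,\Phi)$ is recognised as the canonical solution, the first Hitchin equation $F=-[\Phi,\Phi^*]$ is precisely the statement $R_h=-2$.
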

\begin{proof} It is immediate that $\Phi$ is given by (\ref{phiabstract}) as we are using $\Phi$ to identify $L^{-1}$ with $\kappa^{\frac 12}$. It remains to prove $A$ corresponds to the connection induced by $\nabla_h$. To see this, we work in the coordinates (\ref{coords}) in which $h = 4\vert P \vert^2\text{d}z\text{d}\bar{z}$. In the trivialisation of $\kappa$ defined by d$z$, the connection 1-form induced by $\nabla_h$ is $-\partial \log \vert P \vert^2$. We perform a gauge transformation $\gamma = \bar{P}^{-1}$ from this holomorphic trivialisation to the unitary trivialisation given by $\bar{P}\text{d}z$. The connection 1-form becomes 
\begin{equation*}
    \gamma (-\partial \log \vert P \vert^2)\gamma^{-1} - \text{d}\gamma \cdot \gamma^{-1} = \bar{\partial} \log \bar{P} - \partial \log P 
\end{equation*}
and so the corresponding connection 1-form on $\kappa^{-\frac 12} \oplus \kappa^{\frac 12}$ is
\begin{equation} \label{ah}
    A_h = \begin{bmatrix} \frac 12 P^{-1}\partial_z P & 0 \\ 0 & -\frac 12P^{-1}\partial_z P\end{bmatrix}\text{d}z + \begin{bmatrix} -\frac 12\bar{P}^{-1}\partial_{\bar{z}} \bar{P} & 0 \\ 0 & \frac 12\bar{P}^{-1}\partial_{\bar{z}} \bar{P}\end{bmatrix}\text{d}\bar{z}.
\end{equation}
Note that the section $\Phi\vert_{L^{-1}} \in \Gamma(\text{Hom}(L^{-1}, L)\otimes \kappa)$ equals $(e^{-\frac H2}\partial_z \otimes e^{\frac H2}\text{d}\bar{z}) \otimes \bar{P}\text{d}z$, and hence the isomorphism $L \cong \kappa^{-\frac 12}$ corresponds to the identity map in the trivialisations (\ref{ac1}, \ref{ah}). After using (\ref{zz}) and (\ref{zbzb}) we see that $A_h$ is equal to (\ref{ac1}), which proves our claim. The Higgs field $\Phi$ in (\ref{phiabstract}) is automatically holomorphic, and the remaining Hitchin equation is 
\begin{equation*}
\partial_z\partial_{\bar{z}}\:(\log \vert P \vert^2) = 2\vert P \vert^2,
\end{equation*}
which is the requirement that $h$ has constant negative curvature $R_h = -2$. \end{proof}

\subsection{Recovering the quasi-Einstein structure}
Let us locally introduce a complex function $Q$ such that $P = e^Q$. The fact that $h$ has constant curvature determines the real part of $Q$ as a solution to the Liouville equation
\begin{equation} 
    \partial_z\partial_{\bar{z}}(Q + \bar{Q}) = 2e^{Q + \bar{Q}}.
\end{equation}
To recover the quasi-Einstein structure, we still need to solve for the imaginary part of $Q$. This function satisfies an additional equation coming from the requirement that $\Phi$ is holomorphic, which is equivalent to (\ref{zz}) and (\ref{zbzb}) and has disappeared under the identification leading to (\ref{phiabstract}). Cross-differentiating (\ref{zz}) and (\ref{zbzb}) to obtain expressions for $\partial_z\partial_{\bar{z}} H$, we find the integrability condition
\begin{equation} 
\label{imQ}
    \partial_z\partial_{\bar{z}}(Q - \bar{Q}) = e^{\bar{Q}}\partial_z\bar{Q}-e^Q\partial_{\bar{z}}Q.
\end{equation}
Let us write $Q = \beta - i\Theta$ for real functions $\beta, \Theta$ and choose the coordinate $z = x + iy$ such that $\beta = -\log(2y)$ (i.e. the metric $h$ is in the upper half-plane model). Then (\ref{imQ}) is equivalent to
\[
    y^2(\partial_x^2\Theta + \partial_y^2\Theta) + y(\sin \Theta\: \partial_y \Theta + \cos \Theta\: \partial_x \Theta) + \cos \Theta = 0,
\]
which is the PDE (\ref{alphaeq}) from the Introduction.

If $\Theta$ satisfies (\ref{alphaeq}), we may recover $H$ and hence the full quasi-Einstein structure by integrating (\ref{zz}) or (\ref{zbzb}), which become
\be
\label{h1}
    \partial_x H = -\partial_y\Theta + \frac{\cos \Theta}{y}, \quad
    \partial_y H = \partial_x \Theta + \frac{\sin \Theta}{y} - \frac 1y. 
\ee
Note that for the projective structure it suffices to know d$H$ and no integration is needed. The integrability of equation (\ref{alphaeq}) is an interesting open problem. We have been unable to prove its equivalence with any of the known two-dimensional integrable systems. In particular, it does not appear to be of Liouville type in the sense of  Zhiber--Sokolov \cite{Sokolov}.

The solution (\ref{phiabstract}, \ref{ah}) is explicitly given on the upper half--plane $\{y > 0\}$  by
\be 
\label{hitchinexample}
A_0=\begin{bmatrix}i &0\\ 0 &-i\end{bmatrix}\frac{dx}{2y},\quad 
\Phi_0=\begin{bmatrix}0 &1\\ 0 &0\end{bmatrix}
\frac{dx+idy}{2y}, \quad h=\frac{dx^2+dy^2}{y^2}.
\ee
It  is gauge-equivalent to the solution (\ref{ac1}, \ref{phic1}): we find that $(A_0, \Phi_0)\rightarrow (A, \Phi)$
if we apply  the $SU(2)$ gauge transformation
(\ref{Hitchin_gauge}) with
\begin{equation*}
    \gamma = \begin{bmatrix} e^{\frac 12i\Theta} & 0 \\ 0 & e^{-\frac 12i\Theta} \end{bmatrix}
\end{equation*}
provided we set $P = \frac{1}{2y}e^{-i\Theta}$ and $\Theta$ satisfies (\ref{alphaeq}), so that we can define $H$ by (\ref{h1}). In particular, the solution to (\ref{alphaeq}) is needed to transform Hitchin's canonical example (Example 1.5 in \cite{H87})
into a gauge where we can read off a solution to the QEE.

\begin{example}
 Equation (\ref{alphaeq}) admits constant solutions $\Theta = \pm\tfrac 12\pi$. The corresponding quasi-Einstein structures have constant curvature and are given by
    \begin{alignat*}{2}
        g &= \text{d}x^2 + \text{d}y^2, \hspace{1cm} X^\flat = \frac 1y\text{d}y \hspace{.8cm} &&\text{ for $\Theta = \tfrac 12\pi$}, \\
        g &= \frac{1}{y^2}(\text{d}x^2 + \text{d}y^2), \hspace{.6cm} X^\flat = -\frac 1y\text{d}y \hspace{.6cm} &&\text{ for $\Theta = -\tfrac 12\pi$}.
    \end{alignat*}
More generally, solutions corresponding to symmetries of (\ref{alphaeq}) are presented in \cite{CDKL24} and Section \ref{solsm-1}.
\end{example}
\subsection{Hyper--K{\"a}hler metric}
There is a procedure \cite{DMW}  (see also \cite{DSG}, \cite{Hugo})  which associates a hyper--K\"ahler metric on an open set ${\mathcal{M}}$ of $\R^2\times S^2$ with any solution of ASDYM on $\R^4$ invariant under two translational symmetries. The idea is to 
identify generators of $\mathfrak{su}{(2)}$ with Hamiltonian vector--fields on $S^2$. This is done by considering $SU(2)$ as a subgroup of $SU(\infty)\cong \mbox{SDiff}(S^2)$, where the Hamiltonian action
is given by M\"obius transformations.
Performing this at the level of the Lax pair (\ref{laxasdym}, \ref{asdymcon}) yields a Lax pair consisting of volume--preserving vector
fields on $\R^2\times S^2$ and this, in turn, gives a null tetrad for a hyper--K\"ahler metric. 
The conformal class of this metric is defined by demanding that $L$ and $M$ in (\ref{Lflatcon}) span a $\CP^1$ worth
of $\alpha$--surfaces through each point of $\mathcal{M}$.
Implementing this procedure for (\ref{Lflatcon}) allows to absorb the unknown function $\Theta$
satisfying (\ref{alphaeq}) into coordinates,
so that the resulting metric only depends on Hitchin's canonical example (\ref{hitchinexample}). This metric
admits $\R\otimes\mathfrak{sl}{(2)}$ as its isometry algebra, with 
elements of $\mathfrak{sl}{(2)}$ generating tri--holomorphic isometries. The metric must therefore belong to the Gibbons--Hawking class \cite{GH}, and a further coordinate transformation identifies it as a limiting case of the double Eguchi--Hanson solution with opposite masses when the mass centres coincide. This is an example of a {\em folded} metric, and is in agreement
with the results of Hitchin \cite{Hitchin2} who  constructed the same metric starting from $SU(\infty)$ Higgs bundles.

\section{Solutions without Killing vectors} \label{swni}
Every known solution to the quasi-Einstein equations on a closed 2-manifold admits a Killing vector field. For $m \notin (0,2)$ a Killing vector necessarily exists in the compact case by Theorem \ref{RQEi}. In the case where $X$ is a gradient, the QEE can be solved even locally, and solutions again always admit a Killing vector.  In this section we show explicitly that there exist non-compact and non-gradient quasi-Einstein structures on a 2-manifold with only a discrete isometry group.
\subsection{General solution from a third order PDE}
We begin by introducing coordinates $(x,y)$ such that 
\begin{equation*}
    g = A(x,y)\text{d}x^2 + B(x,y)\text{d}y^2, \hspace{.8cm} X^\flat = C(x,y)\text{d}y
\end{equation*}
for some functions $A,B,C$. Note that this is always possible: up to scale there is a unique vector $Y$ such that $g(X,Y) = 0$, and locally we can always find non-vanishing functions $\nu_1,\nu_2$ such that $\nu_1X$ and $\nu_2Y$ commute by solving
\begin{equation*}
    Y(\nu_1) = \frac{g(X,[X,Y])}{\vert X \vert^2}\nu_1, \hspace{.8cm} X(\nu_2) = -\frac{g(Y,[X,Y])}{\vert Y \vert^2}\nu_2.
\end{equation*}
We then take $\nu_1X$ and $\nu_2Y$ as our coordinate vector fields $\partial_x$ and $\partial_y$. In these coordinates the $xy$-component of the QEE gives 
\begin{equation} \label{dphi}
   \partial_x\left(\frac{C}{B}\right) = 0.
\end{equation} Hence (after redefining $y$ if necessary) we may assume that $B = C$. Next we combine the $xx$ and $yy$ equations to eliminate the second derivative terms. We obtain
\begin{equation} \label{divphi}
    2B^2A+mB\partial_y A - mA\partial_y B = 0.
\end{equation}
Solving this equation for $A$ yields (after redefining $x$ if necessary) $A  = B\text{exp}(-\frac 2m U)$, where $U$ is a function such that $\partial_y U = B$. We find 
\begin{equation} \label{Uans}
    g = \partial_y U(x,y) \text{ exp}(-\tfrac 2m U(x,y)) \text{d}x^2 + \partial_y U(x,y)\text{d}y^2, \hspace{.8cm} X^\flat = \partial_y U(x,y) \text{d}y,
\end{equation}
Note that (\ref{dphi}) and (\ref{divphi}) are equivalent to Jezierski's vector field $\Phi = X/\vert X \vert^2$ satisfying d$\Phi^\flat = 0$ and div~$\Phi = -\frac 1m$ \cite{J09}. The final unsolved component of the QEE gives a PDE for $U$
\begin{align} \label{pdeU}
     me^{\tfrac 2mU}\left(mU_yU_{xxy}- mU_{xy}^2 + U_xU_yU_{xy}\right) + m^2U_yU_{yyy}-m^2U_{yy}^2&\nonumber \\-m(m+3)U_y^2U_{yy} + 2(m+1)U_y^4 + 2m^2\lambda U_y^3 = 0&.
\end{align}
Here $U_x = \partial_x U$ etc. We have thus reduced the QEE on a 2-manifold to a scalar third order PDE. It can be written invariantly as
\begin{equation*}
\Delta \log \vert X \vert^2 -\left(1 +\frac 2m\right)\text{div } X +\frac 1m \vert X \vert^2 + 2\lambda = 0,
\end{equation*}
which can be directly deduced from the QEE. For $m = -1$ and $\lambda = 0$ this equation together with the trace of the QEE implies
\begin{equation*}
    \Delta \log \vert X \vert^2 = R + 2\vert X \vert^2,
\end{equation*}
which is equivalent to the requirement that the metric $h = \vert X \vert^2 g = (U_y)g$ has constant scalar curvature equal to $-2$ (see Proposition \ref{hcc}).

\subsection{Quasi-Einstein 2-manifolds with a homothety} \label{homsec}
Let us consider the group invariant solutions corresponding to symmetries of (\ref{pdeU}). For general $\lambda$ these include coordinate transformations in $x$ and translations in $y$. However, when $\lambda = 0$ there is an additional scaling symmetry generated by 
\begin{equation} \label{homothety}
    K = x\partial_x + y\partial_y.
\end{equation}
The group invariant solutions with $U = U(y/x)$ satisfy $\mathcal{L}_K X^\flat = 0$ and $\mathcal{L}_K g = g$, i.e. $K$ is a homothety. In fact, it follows from the QEE that the homothety must preserve $X^\flat$.
\begin{lemma} \label{KX}
    Let $(g,X)$ solve the QEE with $\lambda = 0$ on a connected 2-manifold admitting a (non-Killing) homothety~$K$. Then either $\mathcal{L}_K X^\flat = 0$ or $g$ is flat.
\end{lemma}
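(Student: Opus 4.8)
The plan is to feed the homothety $K$ into the prolonged equations and to show that $W:=\mathcal{L}_K X^\flat$ satisfies a closed \emph{source-free} linear system, whose only solution (when the Gauss curvature does not vanish) is $W\equiv 0$. Write $\mathcal{L}_K g = 2\kappa g$ with $\kappa$ a nonzero constant, the non-Killing hypothesis being $\kappa\neq 0$. Since a homothety is affine, $\mathcal{L}_K$ commutes with $\nabla$ and annihilates $\mathrm{Ric}(g)$; as $\mathrm{Ric}(g)=\tfrac12 R g$ on a surface this gives $\mathcal{L}_K R = -2\kappa R$, while $\mathcal{L}_K \epsilon = 2\kappa\epsilon$ for the area form. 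These three facts are the only properties of $K$ I will use.

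First I record the prolongation of the QEE from Appendix~\ref{Aa}. Specialising (\ref{prolong}) to the quasi-Einstein case ($c=-2/m$ and $\Lambda=R$, since $\lambda=0$) gives
\begin{equation*}
\nabla_a X_b = \tfrac1m X_a X_b - \tfrac12 R\, g_{ab} + \tfrac12 \Omega\, \epsilon_{ab}, \qquad \mathrm{d}X^\flat = \Omega\,\epsilon,
\end{equation*}
together with the companion first-order equations relating $\mathrm{d}\Omega$ and $\mathrm{d}R$ to $(X^\flat,\Omega,R)$. Applying $\mathcal{L}_K$ to the displayed identity, the affine property turns the left side into $\nabla_a W_b$, and the trace term is \emph{killed} because $\mathcal{L}_K(R\,g)=0$; writing $\tilde\Omega:=\star\,\mathrm{d}W=\mathcal{L}_K\Omega+2\kappa\Omega$, one obtains the homogeneous linearised identity
\begin{equation*}
\nabla_a W_b = \tfrac1m\big(W_a X_b + X_a W_b\big) + \tfrac12 \tilde\Omega\,\epsilon_{ab}.
\end{equation*}
Applying $\mathcal{L}_K$ to the companion equations (the $\star\mathrm{d}R$ contributions cancel) expresses $\nabla_a\tilde\Omega$ as an explicit linear combination of $(W,\tilde\Omega)$ with coefficients built from the background $(X^\flat,\Omega,R)$. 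Thus $(W,\tilde\Omega)$ is a section of a rank-three bundle annihilated by a linear connection $\mathcal D$; in particular it vanishes on all of the connected $M$ as soon as it vanishes at one point.

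It remains to rule out nonzero $(W,\tilde\Omega)$ unless $g$ is flat, for which I would compute the integrability (curvature) of $\mathcal D$. Commuting covariant derivatives in the linearised identity and using the surface identity $[\nabla_a,\nabla_b]W_c=\tfrac12 R\,(g_{ac}W_b-g_{bc}W_a)$ reproduces the $\nabla\tilde\Omega$ equation, confirming consistency; the genuinely new constraint comes from $\nabla_{[a}\nabla_{b]}\tilde\Omega=0$, which yields a scalar relation in which $R$ is coupled directly to $(W,\tilde\Omega)$ through $\langle\nabla R,W\rangle$. Iterating — differentiating this relation once more and again invoking $\mathcal D$, the first-order companion equations, and the second-order prolongation equation for $\Delta R$ — I expect to generate enough independent algebraic conditions to force $(W,\tilde\Omega)=0$ at every point where $R\neq 0$. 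On the (open) set $\{R\neq 0\}$ this gives $W=0$, and since $\mathcal D$ is a connection this propagates to $W\equiv 0$ on all of $M$; the only alternative is $R\equiv 0$, i.e. $g$ flat.

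The main obstacle is precisely this closing step: a single integrability condition only reduces the three-dimensional fibre by one, so one must verify that the further prolongation of the curvature relations is non-degenerate wherever $R\neq 0$. Additional care is needed at the zeros of $X$, where the frame $\{X^\flat,\star X^\flat\}$ used to decompose $W$ degenerates, and along the boundary of $\{R\neq 0\}$; both are handled by continuity, using real-analyticity of solutions in a suitable gauge to pass from the open set $\{R\neq 0\}$ to its complement.
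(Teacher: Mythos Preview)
Your linearised framework is sound and, at the level of strategy, closely parallels the paper: both arguments rest on the closed prolongation system (\ref{closed1})--(\ref{closed2}) and its higher algebraic consequences (\ref{pqe1})--(\ref{pqe2}). The paper works in adapted coordinates $K=\partial_x$, $g=e^x(\mathrm{d}y^2+k(y)^2\mathrm{d}x^2)$ and shows directly that the three scalar constraints (\ref{w2})--(\ref{ycomp}) pin down $(X_1,X_2,\omega)$ as functions of $y$ alone; your version applies $\mathcal{L}_K$ to the same tower and packages $(W,\tilde\Omega)$ as a parallel section of a rank-three connection $\mathcal D$. These are dual formulations of the same computation: Lie-differentiating (\ref{pqe1}) and the two components of (\ref{pqe2}) along $K$ is exactly how one produces your curvature relations for $\mathcal D$.

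The genuine gap is the one you flag yourself. Saying ``I expect to generate enough independent algebraic conditions'' is not a proof, and in fact this is where all the work lies. The paper's proof shows that the required nondegeneracy is \emph{not} automatic: after eliminating $\omega^2$ and $\omega$ one is left with two polynomial constraints $P_1(X_1,X_2)$, $P_2(X_1,X_2)$ of degrees $(6,4)$ and $(4,4)$, and one must compute their resultant in $X_1$, analyse the vanishing of its leading coefficient (which produces the auxiliary relation (\ref{squares})), and then argue separately. Moreover the generic argument breaks at $m=-4$ (several coefficients vanish and the resultant drops degree) and at $m=-1$ (the quadratic-in-$X$ term disappears from (\ref{pqe1}), so one must go back to (\ref{closed2}) and solve linearly for $X$ first). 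In your language, the curvature of $\mathcal D$ fails to be injective on the fibre for these values of $m$ without extra input, and you have given no indication of how to detect or handle this. Until the integrability computation is actually carried out --- including the case analysis --- the argument is incomplete. The real-analyticity remark at the end is both unjustified and unnecessary: once $W$ vanishes on a nonempty open set, parallelism under $\mathcal D$ already propagates it to all of connected $M$.
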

In the flat case one can always find a (possibly different) homothety preserving $X^\flat$. The proof of Lemma \ref{KX} uses a prolongation argument following \cite[Proposition 3.1]{CDKL24}. For the details of this argument we refer the reader to Appendix \ref{Ais}.

We are now in a position to find the general quasi-Einstein structure with $\lambda = 0$ admitting a homothety. It is a 2-parameter family of solutions for any $m$ that do not admit Killing vectors unless $X^\flat$ is closed.

\begin{prop} \label{hvf}
    Let $(g,X)$ solve the QEE with $\lambda = 0$ on a 2-manifold admitting a (non-Killing) homothety $K$. Then there exist local coordinates $(r,s)$ such that 
    \begin{subequations} \label{gxrs}
    \begin{align}
        g &= e^r Z(s)\left((1+s^2)\textup{d}r^2 + 2(s+V(s))\textup{d}r\textup{d}s + (V(s)^2 + 1)\textup{d}s^2\right), \\ X^\flat &= Z(s)(s\textup{d}r + \textup{d}s).
    \end{align}
    \end{subequations}
    Here $V(s) = Z(s)(m+sZ(s))^{-1}$ and $Z(s) = e^{r}\vert X \vert^2$ satisfies the second order ODE
    \begin{align} \label{ode2}
        Z(s^2+1)(m+sZ)^2Z''&-m(s^2+1)(m+Zs)(Z')^2 +ZZ'(m+Zs)(2ms-mZ+4s^2Z) \nonumber \\
        &+Z^2(m^2+3msZ+Z^2(2s^2+m)) = 0.
    \end{align}
\end{prop}
\begin{proof}
By Lemma \ref{KX} we may assume that $\mathcal{L}_K g = g$ and $\mathcal{L}_K X^\flat = 0$. We claim that there exist coordinates $(x,y)$ as in (\ref{Uans}) such that $K$ takes the form (\ref{homothety}). Indeed, let us write 
\begin{equation*}
    K = \alpha(x,y)\partial_x + \beta(x,y)\partial_y
\end{equation*} 
and $\Phi = X/\vert X \vert^2$. From $\mathcal{L}_K \Phi^\flat = \Phi^\flat$ and the fact that $\Phi^\flat = \text{d}y$ in the coordinates (\ref{Uans}) we find (after translating $y$) that $\beta = y$. In order for $\mathcal{L}_K g$ to be diagonal we require $\partial_y \alpha = 0$. If $\alpha \equiv 0$ then $U = -m\log y + a(x)$ for some function $a$, which is incompatible with (\ref{pdeU}). Hence $\alpha$ must be non-zero, so that we can redefine $x$ to obtain (\ref{homothety}).

To find the group invariant solutions, we introduce coordinates $(v,w)$ by $v = \frac yx$ and $w = \log x$. Then $U = U(v)$ depends only on $v$, and we have
\begin{equation*} 
    g = e^wU'(v)\left((v^2 +e^{-\frac 2mU(v)})\text{d}w^2 + 2v\text{d}v\text{d}w + \text{d}v^2\right), \hspace{.8cm} X^\flat = U'(v)(\text{d}v + v\text{d}w).
\end{equation*}
The function $U$ satisfies a third order ODE coming from (\ref{pdeU})
\begin{align*}
    m e^{\frac 2mU}\left(mv^2 U'''U'\right.&\left.-mv^2(U'')^2 + v^2U''(U')^2+2mvU'U''+ m(U')^2+v(U')^3\right) \\
    &+ m^2 U'''U' - m^2(U'')^2-m(m+3)(U')^2U''+2(m+1)(U')^4 = 0.
\end{align*}
This reduces to the second order equation (\ref{ode2}) for $Z = e^{-\frac{U(v)}{m}}U'(v)$ in the coordinates $(r,s)$ defined by 
\begin{equation*}
r = w-\tfrac 1m U(v), \hspace{.8cm} s = e^{\frac 1mU(v)}v.
\end{equation*}
Note that the coordinate transformation is well-defined since there are no solutions to the ODE for $U$ with $U(v) = -m\log v \:+\: $const (for which $s$ would be constant). The data $(g,X)$ in the new coordinates becomes (\ref{gxrs}).
\end{proof}

\subsection{Solutions with $m = -1$} \label{solsm-1} For the case $(m = -1,\lambda = 0)$ the solutions in Proposition \ref{hvf} can be written down explicitly using the observation that the metric $\vert X \vert^2 g$ has constant curvature.

\begin{prop} \label{hvfexp}
    Let $(g,X)$ solve the QEE with $m = -1$ and $\lambda = 0$ on a 2-manifold admiting a (non-Killing) homothety $K$. Then there exist local coordinates $(x,y)$ such that 
\begin{subequations}
\begin{align}
    g &= e^y C'(x)\left(\frac{1}{x-\alpha}\textup{d}x^2 + x^2\textup{d}y^2\right), \label{gexp}\\
    X^\flat &= \frac{1}{2xC'(x)}\left((3C'(x)+2xC''(x))\textup{d}x+ (2xC'(x)+C(x))\textup{d}y\right), \label{Xexp}
\end{align}
\end{subequations}
for some $\alpha \in \R$. The function $C(x)$ satisfies the ODE $L_\alpha C = 0$, with
\begin{equation} \label{ode2C}
L_\alpha C = x^4(x-\alpha)(C'')^2+3x^3(x-\alpha)C'C''+x^2(2x+1-\tfrac 94\alpha)(C')^2+xCC'+\tfrac{1}{4}C^2.
\end{equation}
\end{prop}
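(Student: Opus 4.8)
The plan is to work not with $g$ but with the conformally rescaled metric $h = |X|^2 g$, which by Proposition \ref{hcc} has constant scalar curvature $-2$, i.e. Gaussian curvature $-1$. The first observation is that the homothety $K$ is a \emph{Killing} vector of $h$. Indeed, by Lemma \ref{KX} we may assume $\mathcal{L}_K X^\flat = 0$ (replacing $K$ by a suitable homothety in the flat case), and together with $\mathcal{L}_K g = g$ this gives $\mathcal{L}_K |X|^2 = -|X|^2$, so that $\mathcal{L}_K h = (\mathcal{L}_K |X|^2)\, g + |X|^2\, \mathcal{L}_K g = 0$.

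Since $K$ is a nontrivial Killing field of a surface of constant negative curvature, its norm is nonconstant, so $x := 4\, h(K,K)$ is a valid coordinate. Choosing $y$ to be the flow parameter of $K$ gives $K = \partial_y$ and, by invariance, a $y$-independent $h$. After a shift $y \mapsto y + \phi(x)$ removing the cross term we have $h = E(x)\,\text{d}x^2 + \tfrac{x}{4}\,\text{d}y^2$, the $\text{d}y^2$-coefficient being $h(K,K) = x/4$ by the definition of $x$. Imposing Gaussian curvature $-1$ then becomes, after the substitution $w = 1/E$, the linear equation $w' - w/x = 4x$, whose general solution is $w = 4x(x-\alpha)$; here $\alpha$ is the integration constant in the statement and $h = \tfrac{\text{d}x^2}{4x(x-\alpha)} + \tfrac{x}{4}\,\text{d}y^2$.

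Next I would recover $(g, X^\flat)$. Writing $g = \Omega^{-1} h$ with $\Omega = |X|^2$, the relations $\mathcal{L}_K h = 0$ and $\mathcal{L}_K g = g$ force $\partial_y \log \Omega = -1$, so $\Omega = e^{-y}\omega(x)$; defining $C$ by $C'(x) = (4x\,\omega(x))^{-1}$ turns $g = \Omega^{-1} h$ into (\ref{gexp}). To find $X^\flat$, note that in two dimensions $\text{Ric}(g)$ is pure trace, so the trace-free part of (\ref{QEE}) (with $m=-1,\lambda=0$) reads $(X^\flat \otimes X^\flat)_0 + \tfrac12 (\mathcal{L}_X g)_0 = 0$. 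With $g$ now known in terms of $C'$, this is a first-order system for the $K$-invariant $1$-form $X^\flat$; integrating it introduces the antiderivative $C$ of the metric data and should yield the explicit expression (\ref{Xexp}). Finally, since $\Omega = |X|^2$ one has $h(X^\flat, X^\flat) = 1$; substituting (\ref{Xexp}) and clearing denominators gives $x^2(x-\alpha)(3C'+2xC'')^2 + (2xC'+C)^2 = x^3 (C')^2$, which is exactly $L_\alpha C = 0$.

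I expect the main obstacle to be this last step: verifying that the trace-free quasi-Einstein equation pins $X^\flat$ down to the precise form (\ref{Xexp}) and that its only integrability condition is the single scalar equation $L_\alpha C = 0$. The clean reformulation $L_\alpha C = 0 \iff h(X^\flat, X^\flat) = 1$ is the key simplification that keeps the computation tractable. An alternative, purely computational route would start from Proposition \ref{hvf} with $m = -1$ and transform the second-order ODE (\ref{ode2}) for $Z$ into $L_\alpha C = 0$, with $\alpha$ appearing as a first integral furnished by the constant-curvature condition; the geometric route above is preferable because it produces $\alpha$ and the adapted coordinates simultaneously.
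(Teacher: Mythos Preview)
Your overall strategy coincides with the paper's: both use Lemma~\ref{KX} and Proposition~\ref{hcc} to turn the homothety $K$ into a Killing vector of the constant-curvature metric $h=|X|^2g$, set $x=4h(K,K)$, solve $R_h=-2$ for $E(x)=\tfrac{1}{4x(x-\alpha)}$, and then recover $g$ in the form~(\ref{gexp}) from the conformal factor. The differences are in how $X^\flat$ is pinned down and how $L_\alpha C=0$ emerges. Where you propose to integrate the two trace-free QEE components directly, the paper exploits the Jezierski-type identity $\mathrm{d}(X^\flat/|X|^2)=0$ (valid for any 2D quasi-Einstein structure, cf.\ Section~\ref{swni}) to introduce a potential $f$ with $\mathrm{d}f=X^\flat/|X|^2$, sets $B=e^{-y}f$, and then uses only the $xy$-component of the QEE to relate $A=e^{-y}|X|^{-2}$ and $B$; solving $2xA'-2xB'+A=0$ by $A=2xC'$, $B=C+2xC'$ yields~(\ref{Xexp}) immediately. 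This bypasses exactly the computation you flagged as the ``main obstacle.'' Conversely, your identification $L_\alpha C=0 \iff h(X^\flat,X^\flat)=1$ is a genuinely cleaner derivation of the ODE than the paper's, which obtains~(\ref{ode2C}) by combining the $xx$- and $yy$-components to eliminate $C'''$; your version makes transparent that the second-order constraint is nothing but the tautology $|X|_g^2=\Omega$ rewritten in the $(x,y,C)$ variables.
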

We require $x > \text{min}(0,\alpha)$ to ensure that (\ref{gexp}) is a well-defined Riemannian metric. The equation $L_\alpha C = 0$ can be solved using the observation 
\begin{equation*}
    \frac{\text{d}}{\text{d}x}\left[L_\alpha C(x)\right] = (3C'+ 2xC'')D_\alpha C(x).
\end{equation*}
Here $D_\alpha$ is the third order linear operator
\begin{equation} \label{Dalpha}
    D_\alpha C(x) = x^3(x-\alpha)C'''+(4x^3-\tfrac 72\alpha x^2)C''+x(2x+1-\tfrac 32\alpha)C'+\tfrac 12C.
\end{equation}
The solution to (\ref{Dalpha}) in terms of generalised hypergeometric functions is described in Appendix \ref{Ab}.

\begin{proof}[Proof of Proposition \ref{hvfexp}]
By Lemma \ref{KX} we have $\mathcal{L}_K X^\flat = 0$. Hence $K$ is a Killing vector of the metric $h = \vert X \vert^2 g$. Moreover, $h$ has constant curvature $-2$ by Proposition \ref{hcc}. Set $x =4h(K,K)$ (note that $x$ is not constant) and introduce a second coordinate $y$ in such a way that $K = \partial_y$ and $h$ is diagonal in the coordinates $(x,y)$. We then have 
\begin{equation*}
    h = E(x)\text{d}x^2 + \tfrac 14 x\text{d}y^2.
\end{equation*}
The equation $R_h = -2$ reads
\begin{equation*}
    \frac{E+xE'}{2E^2x^2} = -2,
\end{equation*}
with solution
\begin{equation*}
    E(x) = \frac{1}{4x(x-\alpha)},
\end{equation*}
where $\alpha \in \R$. Next we introduce functions $A,B$ by
\begin{equation*}
    A = e^{-y}\vert X \vert^{-2}, \hspace{1cm} B = e^{-y}f.
\end{equation*}
Here $f$ is a Kähler potential satisfying d$f = \Phi^\flat = X^\flat/\vert X \vert^2$ (recall that $\Phi^\flat$ is closed). Note that $A$ is independent of $y$ since $K$ preserves $X^\flat$, and $f$ must satisfy d$(\mathcal{L}_Kf - f) = 0$. Hence we may add a constant to $f$ so that $B = B(x)$ is also a function of $x$ only. In the coordinates $(x,y)$ the data $(g,X)$ now takes the form
\begin{equation} \label{ansab}
    g = \frac 14e^yA(x)\left(\frac{1}{x(x-\alpha)}\text{d}x^2 + x\text{d}y^2\right),\hspace{.8cm}
    X^\flat = \frac{1}{A(x)}(B'(x)\text{d}x + B(x)\text{d}y).
\end{equation}
It remains to solve for $A$ and $B$. The $xy$-component of the QEE gives
\begin{equation*}
    2xA'-2xB'+A= 0.
\end{equation*}
This can be solved by introducing a function $C$ such that $A = 2xC'$ and $B = C + 2xC'$. In terms of $C$ our Ansatz (\ref{ansab}) becomes (\ref{gexp}) and (\ref{Xexp}) after a shift in $y$. Next we combine the $xx$- and $yy$-components to eliminate third order derivatives of $C$. This results in the equation $L_\alpha C(x) = 0$ with $L_\alpha$ given by~(\ref{ode2C}). It is now straightforward to check that the QEE hold as a consequence of the vanishing of~(\ref{ode2C}) and (\ref{Dalpha}).
\end{proof}

\begin{example}
   As explained in Appendix \ref{Ab}, when $\alpha^{-1}$ is a square integer the quasi-Einstein structure can be written down explicitly without using hypergeometric functions. For example, for $\alpha = 1$ the solution to $D_\alpha C = 0$ in terms of arbitrary constants $a,b,c$ is
\begin{equation} \label{Csol}
     C(x) = \tfrac{a + b\left(\sqrt{x(x-1)}-\arcsinh(\sqrt{x-1})\right) + c\left(3x+\arcsinh(\sqrt{x-1})^2-2\sqrt{x(x-1)}\arcsinh(\sqrt{x-1})\right)}{x}.
\end{equation}
Inserting (\ref{Csol}) into (\ref{ode2C}) gives the constraint $b^2 = 4ac$. We assume $c < 0$ to ensure $g$ is positive definite and $X^\flat$ is not closed. By shifting $y$ and rescaling $C$ we can then set $c = -1$. To solve the constraint we take $b = 2\beta$ and $a = \beta^2$. We are left with a 1-parameter family of solutions, which can be written in terms of the coordinate $t = \sqrt{x - 1}$ as
\begin{subequations}
\begin{align}
    g &= e^y(f(t)^2 + t^2 + 1)\left(\frac{4}{(1+t^2)^2}\text{d}t^2 + \text{d}y^2\right), \\[8pt]
    X^\flat &=  \frac{- tf(t)^2+2f(t)\sqrt{t^2+1}+t(t^2+1)}{(t^2+1)(f(t)^2 + t^2 + 1)}\text{d}t+\frac{f(t)^2 + 2tf(t)\sqrt{t^2+1}- t^2-1}{2(f(t)^2 + t^2 + 1)}\text{d}y,
\end{align}
\end{subequations}
where $f(t) = \arcsinh(t) + \beta$.
\end{example}

\appendix

\section{Details of proofs in Section \ref{qeerig}.} \label{Ac}
In this Appendix we present the proof of the tensor identities in Lemma \ref{lemid} and Proposition~\ref{propid}.

\begin{proof}[Proof of Lemma \ref{lemid}] Note that the QEE \eqref{QEE} are equivalent to \eqref{QEEK} after using the definition \eqref{Kansatz} of $K$. By taking the trace of (\ref{QEEK}), we obtain the scalar curvature
\begin{equation} \label{trqee}
    R = \frac{m}{4\Gamma^2}\vert K \vert^2 - \frac{m}{2\Gamma}\nabla_c K^c + \frac{m}{2\Gamma}\Delta \Gamma - \frac{m}{4\Gamma^2}\vert \nabla \Gamma \vert^2 + n\lambda.
\end{equation}
From (\ref{QEEK}) and (\ref{trqee}) it follows that
\begin{align*}
\nabla^aR_{ab} &= \frac{m}{4\Gamma^2}(\nabla^a K_a)K_b + \frac{m}{4\Gamma^2}K^a\nabla_aK_b - \frac{m}{2\Gamma^3}(K^a\nabla_a \Gamma)K_b - \frac{m}{2\Gamma}\nabla^a\nabla_{(a}K_{b)} + \frac{m}{2\Gamma^2}(\nabla^a\Gamma)\nabla_{(a}K_{b)} \\[5pt]
&+ \frac{m}{2\Gamma}\nabla^a\nabla_a\nabla_b\Gamma - \frac{m}{2\Gamma^2}(\nabla^a\Gamma )\nabla_a\nabla_b \Gamma - \frac{m}{4\Gamma^2}(\Delta \Gamma)\nabla_b\Gamma - \frac{m}{4\Gamma^2}(\nabla^a \Gamma)\nabla_a\nabla_b\Gamma + \frac{m}{2\Gamma^3}\vert \nabla \Gamma \vert^2\nabla_b\Gamma , \\[5pt]
\frac 12\nabla_b R &= \frac m4\nabla_b\left(\frac{1}{2\Gamma^2}\vert K \vert^2 - \frac{1}{2\Gamma^2}\vert \nabla \Gamma \vert^2 - \frac{1}{\Gamma}\nabla_c K^c + \frac{1}{\Gamma}\Delta \Gamma\right).
\end{align*}
The difference of these expressions vanishes as a consequence of the contracted Bianchi identity $\nabla^a(R_{ab} - \frac 12Rg_{ab})$. After multiplying by $\Gamma$, we may isolate the terms involving $\nabla_{(a}K_{b)}$ in this identity. The result is independent of $m$ and given by (\ref{idstep1}).
\end{proof}
\noindent From the identity (\ref{idstep1}) we deduce Proposition \ref{propid} following the method outlined in Section \ref{tensoridsec}.
\begin{proof}[Proof of Proposition \ref{propid}]
    The Ricci identity and (\ref{QEEK}) imply
\begin{align*}
    \nabla^a\nabla_b\nabla_a\Gamma = \nabla_b \Delta \Gamma + R_{ab}\nabla^a\Gamma = &\nabla_b \Delta \Gamma + \frac{m}{4\Gamma^2}(K^a\nabla_a \Gamma)K_b -\frac{m}{2\Gamma}(\nabla^a\Gamma) \nabla_{(a}K_{b)}\\& + \frac{m}{2\Gamma}(\nabla^a\Gamma)\nabla_a\nabla_b\Gamma - \frac{m}{4\Gamma^2}\vert \nabla \Gamma \vert^2\nabla_b\Gamma + \lambda\nabla_b \Gamma.
\end{align*}
Hence (\ref{idstep1}) becomes 
\begin{align*}
    \nabla^a\nabla_{(a}K_{b)} + &(m-2)\frac{1}{2\Gamma}\nabla^a\Gamma\nabla_{(a}K_{b)} = \\
    &\frac{1}{2\Gamma}K_b\nabla_cK^c + \frac{1}{2\Gamma}K^a\nabla_aK_b + (m-4)\frac{1}{4\Gamma^2}(K^c\nabla_c\Gamma)K_b - \frac{1}{2\Gamma}(\Delta \Gamma) \nabla_b\Gamma \\[3pt]&+ (m-3)\frac{1}{2\Gamma}\nabla^a\Gamma\nabla_a\nabla_b\Gamma -(m-4)\frac{1}{4\Gamma^2}\vert \nabla \Gamma \vert^2\nabla_b\Gamma + \nabla_b\Delta \Gamma + \lambda \nabla_b\Gamma \\[3pt]
    &-\frac 12\Gamma \nabla_b\left(\frac{1}{2\Gamma^2}\vert K \vert^2 - \frac{1}{2\Gamma^2}\vert \nabla \Gamma \vert^2 - \frac{1}{\Gamma}\nabla_c K^c + \frac{1}{\Gamma}\Delta \Gamma\right) \\[3pt]
    &= \frac{1}{2\Gamma}K_b\nabla_c K^c + \frac{1}{2\Gamma^2}\vert K \vert^2\nabla_b \Gamma + \frac{1}{2\Gamma}K^a\nabla_aK_b - \frac{1}{4\Gamma}\nabla_b \left(\vert K \vert^2\right)
\\[3pt] &+ (m-4)\frac{1}{4\Gamma^2}(K^c\nabla_c\Gamma)K_b + \frac 12\nabla_b\Delta \Gamma + (m-2)\frac{1}{4\Gamma}\nabla_b \left(\vert \nabla \Gamma \vert^2\right)\\[3pt] &- (m-2)\frac{1}{4\Gamma^2}\vert \nabla \Gamma \vert^2\nabla_b \Gamma + \lambda \nabla_b \Gamma + \frac 12\Gamma\nabla_b\left(\frac{1}{\Gamma}\nabla_c K^c\right).
\end{align*}
In the final step we expanded the brackets on the fourth line and used
\begin{equation*}
    \nabla_b \left(\vert \nabla \Gamma \vert^2\right) = 2(\nabla^a\Gamma)\nabla_a\nabla_b \Gamma.
\end{equation*}
Next we contract with $K^b$ and multiply by $\Gamma^{\frac m2 -1}$. Using $\nabla_b (\vert K \vert^2) = 2K^a\nabla_bK_a$, we arrive at
\begin{align}
    \Gamma&^{\frac m2-1}K^b\nabla^a\nabla_{(a}K_{b)} + \frac 12(m-2)\Gamma^{\frac m2-2}\nabla^a\Gamma K^b\nabla_{(a}K_{b)} = \nonumber\\[3pt] &\frac 12\Gamma^{\frac m2-2}\vert K \vert^2\nabla_cK^c + \frac 14(m-2)\Gamma^{\frac m2-3}\vert K \vert^2K^a\nabla_a\Gamma
    + \frac 12\Gamma^{\frac m2-1}K^a\nabla_a \Delta \Gamma  + \lambda \Gamma^{\frac m2-1}K^a\nabla_a\Gamma \nonumber\\[3pt] &+ \frac 14(m-2)\Gamma^{\frac m2-2}K^a\nabla_a(\vert \nabla \Gamma \vert^2) - \frac 14(m-2)\Gamma^{\frac m2-3}\vert \nabla \Gamma \vert^2 K^a\nabla_a\Gamma + \frac 12\Gamma^{\frac m2}K^a\nabla_a(\Gamma^{-1}\nabla_cK^c). \label{idstep2}
\end{align}
Note that many cancellations occur precisely when $m = 2$. It remains to group terms into divergences and terms proportional to $\nabla_a(\Gamma^{\frac m2-1}K^a)$. The LHS can be expressed as
\begin{align} \label{lhsdiv}
    \Gamma^{\frac m2-1}K^b\nabla^a\nabla_{(a}K_{b)} + \frac 12(m-2)\Gamma^{\frac m2-2}&\nabla^a\Gamma K^b\nabla_{(a}K_{b)} = \nonumber\\ &\nabla^a\left(\Gamma^{\frac m2-1}K^b\nabla_{(a}K_{b)}\right) - \Gamma^{\frac m2-1}\nabla^{a} K^{b}\nabla_{(a}K_{b)}.
\end{align}
For the cubic terms in $K$ on the RHS of (\ref{idstep2}), we can write
\begin{equation} \label{a1}
    \frac 12\Gamma^{\frac m2-2}\vert K \vert^2\nabla_a K^a + \frac 14(m-2)\Gamma^{\frac m2-3}\vert K \vert^2K^a\nabla_a\Gamma = \frac{1}{2\Gamma}\vert K \vert^2\nabla_a\left(\Gamma^{\frac m2-1}K^a\right).
\end{equation}
For $m \neq 2$ the quadratic term in $K$ becomes
    \begin{align} \label{a2}
        \frac 12\Gamma^{\frac m2}K^a\nabla_a\left(\frac{1}{\Gamma}\nabla_cK^c\right) = &\nabla_a\left(\frac 12\Gamma^{\frac m2 - 1}(\nabla_cK^c)K^a\right)\nonumber \\
        &- \frac{m}{2(m-2)}(\nabla_c K^c)\nabla_a\left(\Gamma^{\frac m2 -1}K^a\right) + \frac{1}{m-2}\Gamma^{\frac m2 -1}(\nabla_cK^c)^2.
    \end{align}
Observe that this introduces an extra term proportional to $(\nabla_c K^c)^2$. For the term involving $\Delta \Gamma$, we have 
\begin{equation} \label{a3}
    \frac 12\Gamma^{\frac m2-1}K^a\nabla_a \Delta \Gamma = \nabla_a\left(\frac 12\Gamma^{\frac m2-1}(\Delta \Gamma)K^a\right) - \frac 12(\Delta \Gamma)\nabla_a\left(\Gamma^{\frac m2-1}K^a\right).
\end{equation}
Similarly, the term proportional to $\lambda$ is
\begin{equation} \label{a4}
    \lambda \Gamma^{\frac m2-1}K^a\nabla_a\Gamma = \nabla_a\left(\lambda\Gamma^{\frac m2}K^a\right) - \lambda \Gamma \nabla_a\left(\Gamma^{\frac m2-1}K^a\right).
\end{equation}
The remaining two terms involving $\vert\nabla \Gamma\vert^2$ can be manipulated using
\begin{equation} \label{a5}
    \Gamma^{\frac m2-2}K^a\nabla_a(\vert \nabla\Gamma\vert^2) - \Gamma^{\frac m2-3}\vert \nabla \Gamma \vert^2K^a\nabla_a\Gamma = \nabla_a\left(\Gamma^{\frac m2-2}\vert \nabla \Gamma \vert^2K^a\right) - \frac{\vert \nabla \Gamma \vert^2}{\Gamma}\nabla_a\left(\Gamma^{\frac m2-1}K^a\right).
\end{equation}
Combining (\ref{idstep2}) with \eqref{lhsdiv}--\eqref{a5} leads to the desired identity (\ref{identity}). 
\end{proof}

\section{Prolongation formulae} \label{Aa}

For a fixed metric $g$ and function $\Lambda$, the generalised extremal horizon equations (\ref{gehe}) form an overdetermined system for $X$. This system is not yet closed, as the equations only prescribe the symmetrized part of $\nabla X^\flat$. In the two-dimensional case, the system closes after introducing the function $\Omega = \star \text{d} X^\flat$. The closed system and first three algebraic constraints in $(X,\Omega)$ coming from the prolongation procedure were computed for the quasi-Einstein equations in \cite{CDKL24}. For the generalised extremal horizon equations, the first algebraic constraint is presented in Lemma \ref{prolonglem} and used in the proof of Theorem \ref{toprig}. Below we prove Lemma \ref{prolonglem} and recall the constraints computed in \cite{CDKL24}, which are required for the proof of Lemma \ref{KX}.

\begin{proof}[Proof of Lemma \ref{prolonglem}]
We will make use of abstract index notation for the proof. Equation (\ref{prolong}) can then be written as
\begin{equation} \label{pAIN}
    2\nabla_a X_b = -cX_aX_b - \Lambda g_{ab} + \Omega \epsilon_{ab}.
\end{equation}
We apply $\nabla_d$ to (\ref{pAIN}) and skew over $a,d$ using 
\begin{equation*}
    (\nabla_a\nabla_d - \nabla_d\nabla_a)X_b = \frac R2(g_{ab}X_d - g_{bd}X_a).
\end{equation*}
The result is
\begin{equation*}
    RX_{[a}g_{d]b} = -\frac 12c\Omega\epsilon_{da}X_b - cX_{[a}\nabla_{d]}X_b -(\nabla_{[d}\Lambda )g_{a]b} + (\nabla_{[d}\Omega)\epsilon_{a]b}.
\end{equation*}
The derivative of $X$ can be eliminated by applying (\ref{pAIN}) again,
\begin{equation*}
    RX_{[a}g_{d]b} = -\frac 12c\Omega\epsilon_{da}X_b +\frac 12c\Lambda X_{[a}g_{d]b} - \frac 12c\Omega X_{[a}\epsilon_{d]b}-(\nabla_{[d}\Lambda )g_{a]b} + (\nabla_{[d}\Omega)\epsilon_{a]b}.
\end{equation*}
Next we contract with $\epsilon^{da}$. Using $\epsilon_{ab}\epsilon^{bd} = -\delta_a^{\:d}$, we find
\begin{equation} \label{step1eps}
    R\epsilon^{\:d}_bX_d = -\frac 32c\Omega X_b + \frac 12c\Lambda\epsilon^{\:d}_{b}X_d  + \epsilon^{\:d}_{b}\nabla_d\Lambda - \nabla_b \Omega.
\end{equation}
Solving this expression for $\nabla_b\Omega$ yields (\ref{domega}) with $(\star X)_b = \epsilon_b^{\:d}X_d$. To go further, we differentiate (\ref{step1eps}) by applying $\epsilon^{ba}\nabla_a$. This leads to
\begin{equation*}
    (\tfrac 12c\Lambda-R)\nabla_aX^a + X^a\nabla_a(\tfrac 12c\Lambda-R) + \frac 32c\Omega^2 - \frac 32cX_b\epsilon^{ba}\nabla_a\Omega + \Delta \Lambda = 0.
\end{equation*}
The divergence of $X$ can be computed from the generalised extremal horizon equations and equals $-\frac c2\vert X \vert^2 - \Lambda$. Similarly, the derivative of $\Omega$ can be eliminated by applying (\ref{step1eps}) again. We obtain the following expression for the Laplacian of $\Lambda$
\begin{equation*}
    \Delta \Lambda = -\frac{3}{2}c\Omega^2 - X^a\nabla_a(2c\Lambda - R) + (\tfrac 12c\Lambda-R)(\Lambda -c\vert X \vert^2),
\end{equation*}
which is precisely (\ref{laplam})
\end{proof}
For the quasi-Einstein equations, the closed system becomes
\begin{subequations} \label{closedtot}
\begin{align}
\nabla X^\flat &= \frac 1m X^\flat \otimes X^\flat + (\lambda - \tfrac 12 R)g + \frac 12\Omega\mbox{vol}_g, \label{closed1} \\
\text{d}\Omega &= \frac 3m \Omega X^\flat + \star \text{d} R + \frac{1}{m}\left(2\lambda - \left(m + 1\right)R\right)\star\!X^\flat. \label{closed2}
\end{align}
\end{subequations}
The first three algebraic constraints derived from (\ref{closedtot}) in \cite{CDKL24} are
\begin{subequations}
\begin{align} 
0 &= -\Delta R + \left(1 + \frac 4m\right)\langle X,\text{d} R\rangle + \frac{3}{m}\Omega^2 + \frac{1}{m^2}(2\lambda - (m+1)R)(2\vert X \vert^2 -2\lambda m + mR) \; , \label{pqe1} \\
0 = &-\text{d}(\Delta R) + \frac 1m\left(1 + \frac 4m\right)\langle X, \text{d} R\rangle X^\flat + \left(1 + \frac 4m\right)\text{Hess}(R)(X) + \left(\frac 12 + \frac{8}{m}\right)\Omega\star\!\text{d} R \nonumber \\
&\left[- \frac 2m\left(1 + \frac 1m\right)\vert X \vert^2 - \left(\frac 52 + \frac 4m\right)R + \lambda\left(3 + \frac 8m\right)\right] \text{d} R + \frac{18}{m^2}\Omega^2X^\flat \nonumber \\
&+ \frac{4}{m^2} \left(2\lambda - (m+1)R\right)\left(\left(\lambda - \tfrac 12R\right)X^\flat + 2\Omega\star\! X^\flat + \frac 1m\vert X \vert^2X^\flat\right). \label{pqe2}
\end{align}
\end{subequations}

\section{Inheritance of symmetry} \label{Ais}

The constraints (\ref{pqe1}) and (\ref{pqe2}) were used in \cite[Proposition 3.1]{CDKL24} to show that if $(g,X)$ satisfies the QEE and $g$ admits a Killing vector $K$, then either $g$ has constant curvature or $K$ preserves $X$. Below we extend this result to the case where $K$ is a homothety satisfying $\mathcal{L}_Kg = g$, as considered in Section \ref{homsec}.

\begin{proof}[Proof of Lemma \ref{KX}] Let us introduce coordinates $(x,y)$ such that $K = \partial_x$ and 
\begin{equation*}
    g = e^x(\text{d}y^2 + k(y)^2\text{d}x^2), \hspace{.8cm} X^\flat = X_1(x,y)\text{d}x + X_2(x,y)\text{d}y.
\end{equation*}
Our aim is to show that $\partial_x X_1 = \partial_x X_2 = 0$ unless $g$ is flat. For this we will use the three algebraic constraints on $(X_1,X_2,\Omega)$ coming from (\ref{pqe1}) and (\ref{pqe2}). We set 
\begin{equation*}
    r = Re^x, \hspace{.6cm} p = (\Delta R)e^{2x}, \hspace{.6cm} \omega = \Omega e^{x},
\end{equation*}
and note that $p$ and $r$ are functions of $y$ only. The components of (\ref{pqe1}) and (\ref{pqe2}) (with $\lambda = 0$) read
\begin{subequations}
\begin{align}
    0 &= -p+\left(1 + \tfrac 4m\right)(X_2 r'-rk^{-2}X_1)+\tfrac 3m\omega^2-\tfrac{m+1}{m^2}r(2(k^{-2}X_1^2 + X_2^2) + mr), \label{w2}\\
    0 &= mkr'\left(m(m+4)X_1k'-\tfrac 12k\left((m+4)(3m-2X_1)X_2-m(m+16)k\omega\right)\right)+2m^3pk^2 \nonumber \\ &\:\:+ 18m\omega^2k^2+rX_1(\tfrac 32m^3+m^2(X_2+6)-2mX_1(2X_1+1)-4X_1^2)+ m^2(m+4)krX_2k' \nonumber \\
    &\:\:+mr^2k^2\left(\tfrac 52m^2+2m(X_1+2)+2X_1\right)+2(m+1)X_2^2(m-2X_1)rk^2-8m(m+1)rX_2k^3\omega, \label{xcomp}\\
    0 &= m^2(m+4)(k^3X_2r''+rX_1k')-mr'X_1k(\tfrac 32m^2+2m(X_1+3)+2X_1)-m(m-2)k^3r'X_2^2 \nonumber\\
    &\:\:-m^3k^3p' - m^2krr'(\tfrac 52m + 4)-4(m+1)X_2^3k^3r+18mX_2k^3\omega^2 -m^2krX_2(X_1+2)-\tfrac 12m^3krX_2\nonumber \\
    &\:\:+2m(m+1)X_2k^3r^2+mk^2\omega r(\tfrac 12m^2+8m(X_1+1)+8X_1)-4krX_2((X_1^2+X_1)m+X_1), \label{ycomp}
\end{align}
\end{subequations}
where the prime denotes a $y$-derivative. As in \cite[Proposition 3.1]{CDKL24} it suffices to prove that $\partial_x X_1 = 0$ and $ \partial_x X_2 = 0$ on any open set where $R$ is not constant (note that if $R$ is constant, $g$ must be flat). Let us first consider the generic case where $m \neq -1$ and $m \neq -4$. We solve (\ref{w2}) for $\omega^2$ and use this to eliminate all instances of $\omega^2$ in (\ref{xcomp}) and (\ref{ycomp}). We can then solve (\ref{xcomp}) for $\omega f$, where
\begin{equation*}
    f = m(m+16)r'-16(m+1)rX_2.
\end{equation*}
Using this to eliminate all remaining instances of $\omega$ from (\ref{w2}) times $f^2$ and (\ref{ycomp}) times $f$, we obtain two polynomial constraints $P_1(X_1,X_2)$ and $P_2(X_1,X_2)$ respectively. $P_1$ has degree 6 in $X_1$ and degree 4 in $X_2$, whereas $P_2$ has degree 4 in both $X_1$ and $X_2$. The coefficients of $P_1,P_2$ depend on $y$ only. Explicitly,

\begin{align*}
    P_1(X_1,X_2) &= X_1^6+k^4\left((X_1+\tfrac m4)^2 - \tfrac 23m(m+1)rk^2\right)X_2^4 + \dots, \\
    P_2(X_1,X_2) &= X_1^4+k^4X_2^4 + \dots,
\end{align*}
where the dots denote terms of lower order in either $X_1$ or $X_2$. To eliminate $X_1$ we compute the resultant res$(P_1,P_2)$, viewing the $P_i$ as polynomials in $X_1$. We obtain a polynomial constraint of degree 16 in $X_2$ only with coefficients depending on $y$. If $X_2$ depends on $x$, this polynomial must be identically zero. Setting the leading coefficient to zero gives
\begin{equation} \label{squares}
    \left(4r''rk^2-5k^2(r')^2-4rr'kk'-3r^2\right)^2+4r^2\left(kr'-4rk'\right)^2 = 0.
\end{equation}
As this is a sum of squares, both terms in (\ref{squares}) must be zero. The second term gives $r = ck^4$ for some constant $c$. Using the formula $r = -2k^{-1}k''$ for the Ricci scalar we obtain $k'' = -\frac 12ck^5$. The first term in (\ref{squares}) now imposes the first order equation
\begin{equation} \label{ode1}
    \frac{1}{16}+\frac 16ck^6 + (k')^2 = 0.
\end{equation}
We express $r$ in terms of $k$ and use (\ref{ode1}) to eliminate all derivatives of $k$ in res$(P_1,P_2)$. The resultant becomes a polynomial of degree 14 in $X_2$ with non-zero leading coefficient. We conclude that $\partial_x X_2 = 0$. But now we can view $P_1$ as a polynomial of degree 6 in $X_1$ with coefficients depending on $y$ only. The leading coefficient is non-zero, showing that $\partial_x X_1 = 0$ and therefore $\mathcal{L}_K X^\flat = 0$.

For the case $m = -4$ many terms vanish and the resultant of $P_1$ and $P_2$ is a polynomial of degree only 12 in $X_2$. Equating the leading term to zero leads to
\begin{equation*}
    (3r'kp-2p'kr)^2+p^2r^2=0.
\end{equation*}
It follows that we must have $p = 0$. After imposing this condition, res$(P_1,P_2)$ has degree 8 with non-zero leading coefficient. We again find $\partial_x X_2 = 0$, and as the leading coefficient of $X_1^6$ in $P_1$ is non-zero also $\partial_x X_1 = 0$ and therefore $\mathcal{L}_K X^\flat = 0$.

For the case $m = -1$ we return to (\ref{closed2}), (\ref{pqe1}) and (\ref{pqe2}), which now read
\begin{subequations}
\begin{align}
    \text{d}\Omega &= -3\Omega X^\flat +\star\text{d}R, \label{sm1}\\
    0 &= \Delta R + 3\langle X,\text{d}R\rangle+3\Omega^2, \label{sm2}\\
    0 &= -\text{d}\Delta R + 3\langle X,\text{d}R\rangle X^\flat -3\text{Hess}(R)(X)-\frac{15}{2}\Omega \star\!\text{d}R +\frac 32 R\:\text{d}R + 18\Omega^2X^\flat.\label{sm3}
\end{align}
\end{subequations}
Equation (\ref{sm2}) can be used to eliminate the $\langle X,\text{d}R\rangle$ term in (\ref{sm3}):
\begin{equation*}
   (3\text{Hess}(R) + (\Delta R-15\Omega^2)g)(X,\cdot) = -\text{d}\Delta R - \frac{15}{2}\Omega\star\text{d}R + \frac 32R\text{d}R.
\end{equation*}
We can solve this linear equation for $X$ in terms of $\Omega$ and the metric unless the determinant of the LHS is zero, which can only happen if $\partial_x\omega = 0$. If a solution for $X$ exists, plugging it into (\ref{sm2}) gives a polynomial equation for $\omega$ with non-zero leading coefficient. Hence in either case we must have $\partial_x \omega = 0$. Finally, (\ref{sm1}) is a linear equation for $X^\flat$ (unless $\Omega = 0$, but then the curvature is constant) with coefficients depending on $y$ only, which yields $\mathcal{L}_K X^\flat = 0$.
\end{proof}

\section{Hypergeometric differential equation} \label{Ab}

To solve (\ref{ode2C}) we first consider the third order linear ODE (\ref{Dalpha}) implied by it. Suppose $\alpha \neq 0$ and write $\nu = \alpha^{-1}$ and $z = \nu x =  \alpha^{-1}x$. The ODE (\ref{Dalpha}) reads
\begin{equation} \label{hypergeode}
    2z^3(z-1)C''' +z^2(8z-7)C''+z(4z-3+2\nu)C' + C\nu = 0,
\end{equation}
where the prime denotes a $z$-derivative. Equation (\ref{hypergeode}) has regular singular points at $0, 1$ and $\infty$ and can be solved in terms of the hypergeometric function ${}_3F_2$. Recall this function is defined as 
\begin{equation} \label{hypergeo}
{}_3F_2(a_1,a_2,a_3; b_1,b_2; z) = \sum_{n = 0}^\infty \frac{(a_1)_n(a_2)_n(a_3)_n}{(b_1)_n(b_2)_n}\frac{z^n}{n!},
\end{equation}
where the $a_i,b_i$ are complex parameters and $(a)_n = a(a+1)\cdots(a+n-1)$ with $(a)_0 = 1$. The power series (\ref{hypergeo}) converges for all complex $\vert z \vert < 1$. The solution to (\ref{hypergeode}) for $\vert z \vert < 1$ can be written as 
\begin{equation} \label{licomb0}
    C(z) = \beta C_1(z) + \gamma C_2(z) + \delta C_3(z),
\end{equation}
where $\beta,\gamma,\delta$ are (in general complex) constants and the $C_i$ denote the hypergeometric functions 
\begin{align*}
C_1(z) &= z^{-\frac 12}\:{}_3F_2\left(-\frac 12,-\frac 12,\frac 12\:;\:\frac 12-\sqrt{\nu},\frac 12+\sqrt{\nu}\:;\: z\right), \\
C_2(z) &= z^{\sqrt{\nu}}\:{}_3F_2\left(\sqrt{\nu},\sqrt{\nu},1 + \sqrt{\nu}\:;\:1 +2\sqrt{\nu}, \frac 32+\sqrt{\nu}\:;\:z\right), \\
C_3(z) &= z^{- \sqrt{\nu}}\:{}_3F_2\left(-\sqrt{\nu},-\sqrt{\nu},1 - \sqrt{\nu}\:;\:1 -2\sqrt{\nu}, \frac 32-\sqrt{\nu}\:;\:z\right).
\end{align*}
The solutions $C_1$ and $C_3$ are not valid when $\nu^{\frac 12}$ is a half-integer, as we would be dividing by zero in~(\ref{hypergeo}). In this case we replace $C_1$ and $C_3$ by Meijer G-functions as explained in \cite{CR08}. For $\alpha = 0$ the ODE (\ref{Dalpha}) has a singular point at $x = 0$ and can again be solved in terms of the Meijer G-function. The functions $C_i(z)$ are well-defined for all other values of $\nu$ provided we set $\sqrt{\nu} = i\sqrt{-\nu}$ for $\nu < 0$. Equation (\ref{hypergeode}) implies $L_\alpha C = $ const with $L_\alpha$ as in (\ref{ode2C}). Requiring this constant to be zero imposes a condition on the coefficients in (\ref{licomb0}). Using the series expansion (\ref{hypergeo}), this condition is found to be
\begin{equation} \label{coefcon}
    -\frac 14\beta^2 + 4\nu(1-4\nu)\gamma\delta = 0.
\end{equation}
Imposing this restriction gives the general solution to (\ref{ode2C}) around $z = 0$.  To obtain a Riemannian metric we require $C$ and $z$ to be real, with $z > 1$ when $\nu > 0$ and $z < 0$ when $\nu < 0$. The corresponding solution can be obtained by analytic continuation, where the constants $\beta,\gamma,\delta$ should be chosen such that the resulting function $C$ is real. For $z \in (0,1)$, for which the metric (\ref{gexp}) is Lorentzian, this condition is $\beta \in \R$ and $\gamma = \bar{\delta}$.

Let us now focus on the case where $\nu^{\frac 12}$ is an integer $n$. In this case the hypergeometric series in $C_3$ terminates and $C_3(z)$ equals $z^{- n}$ times a polynomial in $z$ of degree $n-1$. To simplify the expressions for $C_1$ and $C_2$ we may use the identities
\begin{align*}
    {}_3F_2\left(-\frac 12,-\frac 12,\frac 12; -\frac 12,\frac 12;z\right) =\sqrt{1-z}, \\
    {}_3F_2\left(1,1,1;2,\frac 32; z\right) = \frac{\arcsin(\sqrt{z})^2}{z},
\end{align*}
together with the differentiation formulae
\begin{align*}
    \left(z\tfrac{\text{d}}{\text{d}z}+ a_1\right){}_3F_2(a_1,a_2,a_3;b_1,b_2;z) &= a_1\:{}_3F_2(a_1+1,a_2,a_3;b_1,b_2;z), \\
     \left(z\tfrac{\text{d}}{\text{d}z} + b_1-1\right){}_3F_2(a_1,a_2,a_3;b_1,b_2;z) &= (b_1-1)\:{}_3F_2(a_1,a_2,a_3;b_1-1,b_2;z).
\end{align*}
For example, for $\nu = 1$ we obtain
\begin{equation*}
    C_1(z) = \tfrac{\arcsin(\sqrt{z}) + \sqrt{z(1-z)}}{2z}, \hspace{.5cm} C_2(z) = \tfrac{9z-3\arcsin(\sqrt{z})\left(2\sqrt{z(1-z)}+\arcsin(\sqrt{z})\right)}{z},\hspace{.5cm} C_3(z) = \tfrac{1}{z}.
\end{equation*}
For $z > 1$ we set $\sqrt{1-z} = i\sqrt{z-1}$ and arcsin$(\sqrt{z}) = \frac{\pi}{2}-i\arcsinh(\sqrt{z-1})$. We recover (\ref{Csol}) with
\begin{equation*}
    a = \delta + \frac{\pi}{4}\beta - \frac{3}{4}\pi^2\gamma, \hspace{.8cm} b = \frac i2\beta - 3\pi\gamma i, \hspace{.8cm} c = 3\gamma.
\end{equation*}
Note that (\ref{coefcon}) reduces to $b^2 = 4ac$. Using the equations above we can compute the hypergeometric functions $C_i$ for $\nu^{\frac 12} = 2,3,\dots$ and find more explicit solutions to (\ref{ode2C}).


\begin{thebibliography}{99}



\bibitem{ACS17}
D. Angella, S. Calamai and C. Spotti. On the Chern–Yamabe problem. Math. Res. Lett. \textbf{24}(3) (2017) 645-677.
\bibitem{BGKW22} E. Bahuaud, S. Gunasekaran, H. K. Kunduri and E. Woolgar. Static near-horizon geometries and rigidity of quasi-Einstein manifolds. Lett. Math. Phys. \textbf{112}, 6 (2022) 116.
\bibitem{BGKW23} E. Bahuaud, S. Gunasekaran, H. K. Kunduri and E. Woolgar. Deformations of the Kerr-(A)dS near horizon geometry. Class. Quant. Grav. \textbf{41} (2024) 065001.
\bibitem{B98} C. B\"ohm. Inhomogeneous Einstein metrics on low-dimensional spheres and other low-dimensional
 spaces. Invent. Math. \textbf{134}(1) (1998) 145-176.
 \bibitem{BDE} R. L. Bryant, M. Dunajski, and M. G. Eastwood. Metrisability of two-dimensional projective structures. J. Diff. Geom. {\bf 83} (2009) 465--499.
 \bibitem{C43} E. Cartan. Sur une classe d’espaces de Weyl. \'Ecole Norm. Sup. \textbf{60} (1943) 1-16.
\bibitem{CSW11} J. Case, Y-J. Shu and G. Wei. Rigidity of quasi-Einstein metrics. Diff. Geom. Appl. \textbf{29} (2011) 93–100.
\bibitem{C12} J. Case. Smooth metric measure spaces and quasi-Einstein metrics. Int. J. Math. \textbf{23} (2012) 1250110.
\bibitem{Hugo} E. Chac\'on and H. Garc\'ia-Compe\'an.
Self-dual gravity via Hitchin’s equations
J. Math. Phys.  {\bf 60} (2019) 052502.
\bibitem{CR08} E. S. Cheb-Terrab and A. D. Roche. Hypergeometric solutions for third order linear ODEs. {\tt arXiv:0803.3474} (2008).
\bibitem{CST18} P. T. Chru\'sciel, S. J. Szybka and K. P. Tod. Towards a classification of vacuum near-horizons geometries. Class. Quant. Grav. \textbf{35} (2018) 015002.
\bibitem{C24} E. Cochran. Killing vector fields on compact m-Quasi-Einstein manifolds. Proc. Amer. Math. Soc. \textbf{153} (2025) 841-849.
\bibitem{CDKL24} A. Colling, M. Dunajski, H. K. Kunduri and J. Lucietti. New quasi-Einstein metrics on a two-sphere. {\tt arXiv:2403.04117} (2024).
To appear in the Journal of Geometric Analysis.

\bibitem{CKL24} A. Colling, D. Katona and J. Lucietti. Rigidity of the extremal Kerr-Newman horizon. Lett. Math. Phys. \textbf{115}, 19 (2025). 

\bibitem{derdzinski} A. Derdzinski.  Connections with skew-symmetric Ricci tensor on surfaces. 
Result. Math. {\bf 52} (2008) 223–245.
\bibitem{DRSL}
D.~Dobkowski-Ry\l{}ko, W.~Kami\'nski, J.~Lewandowski and A.~Szereszewski.
The Near Horizon Geometry equation on compact 2-manifolds including the general solution for g \ensuremath{>} 0,
Phys. Lett. B \textbf{785} (2018) 381-385.

\bibitem{DMW} M. Dunajski, L. J. Mason, N M. J. Woodhouse.  From 2D Integrable Systems to Self-Dual gravity', J. Phys. A: Math. Gen  {\bf 31} (1998) 6019.

\bibitem{DSG} M. Dunajski.  Nonlinear graviton from the sine-Gordon equation. Twistor
Newletter  {\bf 40} 43-5; In {\em Mason, L.J.  et al.  Further advances in twistor
theory, \bf{3}} 85-87.

\bibitem{D24} M. Dunajski. Solitons, Instantons, and Twistors. 2nd edn. Oxford Graduate Texts in Mathematics
 \textbf{31}. Oxford University Press, Oxford (2024).
\bibitem{DL23} M. Dunajski and J. Lucietti. Intrinsic rigidity of extremal horizons. J. Diff. Geom. \textbf{132} (2026) 179--201.
\bibitem{G95} P. Gauduchon. Structures de Weyl-Einstein, espaces de twisteurs et vari\'et\'es de type $S^1 \times S^3$. J. reine angew. Math. \textbf{469} (1995) 1–50.

\bibitem{GH}  G. W. Gibbons, and  S. W. Hawking. Gravitational multi-instantons,
Phys. Lett. {\bf 78B} (1978) 430-432.

\bibitem{H87} N. Hitchin. The Self-Duality Equations on a Riemann surface. Proc. Lon. Math. Soc. \textbf{55} (1987) 59-126.
\bibitem{Hitchin2} N. J. Hitchin. Higgs bundles and diffeomorphism groups. In H.-D. Cao and
S.-T. Yau, editors, {\em Advances in geometry and mathematical physics. Surveys in differential geometry}, pages 139–163. International Press (2016) 139--163.
\bibitem{J09} J. Jezierski. On the existence of Kundt's metrics and degenerate (or extremal) Killing horizons. Class. Quant. Grav. \textbf{26} (2009) 035011. 
\bibitem{KL24} W. Kami\'nski and J. Lewandowski. Extreme horizon equation.
{\tt arXiv:2406.20068} (2024).
\bibitem{KW74} J. L. Kazdan and F. W. Warner. Curvature functions for compact 2-manifolds. Ann. Math. \textbf{99} (1974) 14-47.
\bibitem{K93} J. L. Kazdan. Applications of Partial differential Equations to Problems in Geometry (1993).
\bibitem{KK03} D.-S. Kim and Y. H. Kim. Compact Einstein warped product spaces with nonpositive scalar curvature. Proc. Amer. Math. Soc. \textbf{131} (2003) 2573-2576.
\bibitem{krynski} W. Kry\'nski.
Webs and projective structures on a plane. 
Diff. Geom. Appl. {\bf 37} (2014) 133.
\bibitem{KLR07} H.~K.~Kunduri, J.~Lucietti and H.~S.~Reall. Near-horizon symmetries of extremal black holes. Class. Quant. Grav. \textbf{24} (2007) 4169-4189.
\bibitem{KL13}
H.~K.~Kunduri and J.~Lucietti,
Classification of near-horizon geometries of extremal black holes.
Living Rev. Rel. \textbf{16} (2013) 8.
\bibitem{LP03} J. Lewandowski and T. Pawlowski. Extremal isolated horizons: a Local
 uniqueness theorem. Class. Quant. Grav. \textbf{20} (2003) 587-606.
\bibitem{L10} M. Limoncu. Modifications of the Ricci tensor and applications. Arch. Math. \textbf{95} (2010) 191-199.
\bibitem{LPP04} H. Lu, D. N. Page and C. N. Pope. New inhomogeneous Einstein metrics on sphere bundles over Einstein-K\"ahler manifolds. Phys. Lett. B \textbf{593} (2004) 218-226.
\bibitem{MW} L. J. Mason, and N. M. J. Woodhouse. Integrability, Self-duality
and Twistor theory, LMS Monograph, OUP. (1996)
\bibitem{M65} J. Milnor. Topology from the differentiable viewpoint.  Princeton Landmarks in Mathematics. Princeton University Press, Princeton, NJ (1997) reprint of the 1965 original.
\bibitem{NR16} P. Nurowski and M. Randall.
Generalized Ricci solitons. 
J. Geom. Anal. {\bf 26}  (2016) 1280–1345. 
\bibitem{PT93} H. Pedersen and K. P. Tod. Three-dimensional Einstein-Weyl geometry. Adv. Math. \textbf{97} (1993) 74–109.
\bibitem{Q97} Z. Qian. Estimates for weighted volumes and applications. Quart. J. Math. \textbf{48} (1997) 235-242.
\bibitem{R14} M. Randall.
{Local obstructions to projective surfaces admitting skew-symmetric Ricci tensor.}
Jour. Geom. Phys. {\bf 76} (2014) 192.
\bibitem{T92} K. P. Tod. Compact 3-dimensional Einstein-Weyl structures. J. London. Math. Soc. \textbf{45} (1992) 341.
\bibitem{WY16} W. Wylie and D. Yeroshkin. On the geometry of Riemannian manifolds with density. {\tt arXiv:1602.08000} (2016).
\bibitem{W23} W. Wylie. Rigidity of compact static near-horizon geometries with negative cosmological constant. Lett. Math. Phys. \textbf{113}, 29 (2023).
\bibitem{Y24} W. Yu. A note on Kazdan–Warner type equations on compact Riemannian manifolds. Non-linear Anal. \textbf{246} (2024) 113596.
\bibitem{Sokolov} A. V. Zhiber and V. V. Sokolov.
Exactly integrable hyperbolic equations of Liouville type.
Russian Math. Surveys {\bf 56} (2001)  61–101.


\end{thebibliography}
\end{document}